\tikzset{->, node distance =
  15mm, >=stealth', shorten > = 1pt, ArrowNode/.style =
  {font=\footnotesize}} \usepackage{tikz-cd} \usepackage[all]{xy}
\setlist[itemize]{label=$\circ$, leftmargin = *}
\setlist[enumerate]{label=$(\alph*)$, leftmargin = *}
\theoremstyle{plain}
\newtheorem{lemma}{Lemma}[section]
\newtheorem{proposition}[lemma]{Proposition}
\newtheorem{corollary}[lemma]{Corollary}
\newtheorem{definition}[lemma]{Definition}
\newtheorem{theorem}[lemma]{Theorem}
\theoremstyle{definition}
\newtheorem{example}[lemma]{Example}
\newcommand{\lra}{\leftrightarrows}
\newcommand{\ra}{\rightarrow}
\newcommand{\inclu}{\hookrightarrow}
\newcommand{\up}{{\uparrow}}
\newcommand{\ca}{\mathcal}
\newcommand{\se}{\subseteq}
\newcommand{\sm}{\setminus}
\newcommand{\we}{\wedge}
\newcommand{\bve}{\bigvee}
\newcommand{\bca}{\bigcap}
\newcommand{\bcu}{\bigcup}
\newcommand{\bd}[1]{\mathbf{#1}}
\newcommand{\na}{\nabla}
\newcommand{\del}{\Delta}
\newcommand{\id}{\mathsf{id}}
\newcommand{\Om}{\Omega}
\newcommand{\pt}{\mathsf{pt}}
\newcommand{\cohfrith}{{\bf CohFrith}}
\newcommand{\cohfrm}{{\bf CohFrm}}
\newcommand{\Frm}{{\bf Frm}}
\newcommand{\ffrm}{{\bf Frith}}
\newcommand{\sffrm}{{\bf Frith}_{\rm sym}}
\newcommand{\qunifrm}{{\bf QUniFrm}}
\newcommand{\unifrm}{{\bf UniFrm}}
\newcommand{\dlat}{{\bf DLat}}
\newcommand{\perv}{{\bf Pervin}}
\newcommand{\sperv}{{\bf Pervin}_{\rm sym}}
\newcommand{\Top}{{\bf Top}}
\newcommand{\psym}{{\bf Sym}_{\rm
    Perv}} 
\newcommand{\fsym}{{\bf Sym}_{\rm
    Frith}} 
\newcommand{\sym}{{\bf Sym}_{\rm QUniFrm}} 
\newcommand{\cB}{{\mathcal B}} \newcommand{\cC}{{\mathcal C}}
\newcommand{\cD}{{\mathcal D}} \newcommand{\cE}{{\mathcal E}}
\newcommand{\cF}{{\mathcal F}} \newcommand{\cJ}{{\mathcal J}}
\newcommand{\cL}{{\mathcal L}} 
\newcommand{\cP}{{\mathcal P}} \newcommand{\cS}{{\mathcal S}}
\newcommand{\cT}{{\mathcal T}} \newcommand{\cU}{{\mathcal U}}
\newcommand{\adj}{{\;\leftrightarrows\;}}  \newcommand{\idl}{{\rm Idl}} 
 \newcommand{\two}{{\bf 2}}
 \newcommand{\luni}{{\vartriangleleft}}
\newcommand{\just}[2]{\stackrel{#1}{#2}}
\begin{document}
\title{A pointfree theory of Pervin spaces\footnote{Célia Borlido was
    partially supported by the Centre for Mathematics of the
    University of Coimbra - UIDB/00324/2020, funded by the Portuguese
    Government through FCT/MCTES. Anna Laura Suarez received funding
    from the European Research Council (ERC) under the European
    Union's Horizon 2020 research and innovation program (grant
    agreement No.670624).}} \author{C\'{e}lia Borlido\footnote{Centre
    for Mathematics of the University of Coimbra (CMUC),
    \mbox{3001-501}~Coimbra, Portugal, cborlido@mat.uc.pt} \and Anna
  Laura Suarez\footnote{Laboratoire J. A. Dieudonn\'e UMR CNRS 7351,
    Universit\'e Nice Sophia Antipolis, 06108~Nice~Cedex~02,
    France, annalaurasuarez993@gmail.com}}\maketitle
\begin{abstract}
  We lay down the foundations for a pointfree theory of Pervin
  spaces. A Pervin space is a set equipped with a bounded sublattice
  of its powerset, and it is known that these objects characterize
  those quasi-uniform spaces that are transitive and totally
  bounded. The pointfree notion of a Pervin space, which we call Frith
  frame, consists of a frame equipped with a generating bounded
  sublattice.
  In this paper we introduce and study the category of Frith frames
  and show that the classical dual adjunction between topological
  spaces and frames extends to a dual adjunction between Pervin spaces
  and Frith frames.
  Unlike what happens for Pervin spaces, we do not have an equivalence
  between the categories of transitive and totally bounded
  quasi-uniform frames and of Frith frames, but we show that the
  latter is a full coreflective subcategory of the former.
  We also explore the notion of completeness of Frith frames inherited
  from quasi-uniform frames, providing a characterization of those
  Frith frames that are complete and a description of the completion
  of an arbitrary Frith frame.

\end{abstract}

\section{Introduction}

\emph{Pervin quasi-uniformities} were introduced to answer the
question of whether a given topology is induced by some
quasi-uniformity.  Pervin~\cite{PERVIN1962} showed that, if $\tau$ is
a topology on $X$, then~$\tau$ is the topology induced by the
quasi-uniformity generated by the \emph{entourages} of the form
\[E_U := (X \times U) \cup (U^c \times X),\]
for $U \in
\tau$. Abstracting this idea, we may consider on a set $X$ the
quasi-uniformity $\ca{E}_{\cF}$ generated by $\{E_U \mid U \in \cF\}$,
for an arbitrary family $\cF \subseteq \cP(X)$, and the quasi-uniform
spaces obtained in this way are known as \emph{Pervin
  spaces}. In~\cite[Proposition~2.1]{FletcherLindgren1982} it is shown
that the quasi-uniform spaces $(X, \cE_\tau)$ when $\tau$ is a
topology on $X$ are transitive and totally bounded, but the same
argument works for every space $(X, \cE_\cF)$. Actually, more can be
said: \emph{every} transitive and totally bounded quasi-uniformity on
a set $X$ is of the form $\cE_\cF$ for some family $\cF \subseteq
\cP(X)$. To the best of our knowledge, this result is due to Gehrke,
Grigorieff, and Pin in unpublished work (see also~\cite{pin17}), and
at the present date, a proof can be found in the Goubault-Larrecq's
blog~\cite{Goubault-Larrecq-blog}. Another interesting aspect of
Pervin spaces is that the bounded sublattice of $\cP(X)$ generated by
some family $\cF$ can be recovered from the quasi-uniform space $(X,
\cE_\cF)$: it consists of the subsets $U \subseteq \cP(X)$ such that
$E_U \in \cE_\cF$ (see~\cite[Theorem~5.1]{GehrkeGool2014} for a
proof). For that reason, Pervin spaces may be elegantly represented by
pairs $(X, \cS)$, where $X$ is a set and $\cS$ is a bounded sublattice
of $\cP(X)$.

The main contribution of this paper is the development of a pointfree
theory of Pervin spaces. The central object of study are the pairs of
the form $(L, S)$, where $L$ is a frame and $S$ is a join-dense
bounded sublattice. We name such pairs of \emph{Frith frames}. This
choice is justified by the fact that the pointfree version of Pervin's
construction is known as the \emph{Frith
  quasi-uniformity}~\cite{Frith86} on a congruence frame. The
correctness of our notion is evidenced by the existence of a
Pervin-Frith dual adjunction extending the classical dual adjunction
between topological spaces and frames
(cf. Proposition~\ref{p:24}). One major difference in the pointfree
setting is the lack of an equivalence between the categories of Frith
frames and of transitive and totally bounded quasi-uniform frames,
although Frith frames do form a full coreflective subcategory of
transitive and totally bounded quasi-uniform frames
(cf. Theorem~\ref{t:4}). The picture is different when we restrict to
\emph{symmetric Frith frames} (that is, those Frith frames $(L,S)$
where $S$ is a Boolean algebra) on the one hand, and to transitive and
totally bounded uniform frames on the other
(cf. Corollary~\ref{c:5}). We show that every Frith frame admits a
\emph{symmetrization}, which defines a reflection of Frith frames onto
the symmetric ones (cf. Proposition~\ref{p:25}). Moreover, the
symmetrization of a Frith frame is shown to be, on the one hand, a
restriction of the usual uniform reflection of quasi-uniform frames
(cf. Proposition~\ref{p:2}) and, on the other hand, a pointfree
version of the \emph{symmetrization} of a Pervin space
(cf. Proposition~\ref{p:27} and Theorem~\ref{t:5}).  Finally, we
explore the notion of \emph{complete} Frith frame, which is naturally
inherited from the homonymous notion for quasi-uniform frames
(cf. Proposition~\ref{p:1}), both from the point of view of
\emph{dense extremal epimorphisms} and of \emph{Cauchy maps}. In
particular, we characterize the complete Frith frames as those whose
frame component is \emph{coherent}
(cf. Theorem~\ref{manycharacterizations}).

The paper is organized as follows.  Section~\ref{sec:prelim} is a
preliminary section, where we present the background needed and
establish the notation used in the rest of the paper. In
Section~\ref{sec:pervin} we give an overview of the theory of Pervin
spaces. While we did not intend to go very deep in our exposition, we
tried to provide enough details to allow the reader to compare the
known results with our pointfree approach. In Section~\ref{sec:1} we
introduce the category of Frith frames and discuss some of its general
properties. In particular, we show the existence of a dual adjunction
between Pervin spaces and Frith frames (Section~\ref{FrFrm}), we
discuss compactness, coherence and zero-dimensionality of Frith frames
(Section~\ref{sec:coh}), we show that the category of Frith frames is
complete and cocomplete (Section~\ref{sec:lim-colim}), and we
characterize some special morphisms (Sections~\ref{sec:spec-mor}
and~\ref{sec:td}).  In Section~\ref{sec:2} we show how to assign a
transitive and totally bounded quasi-uniform frame to each Frith frame
and show that this assignment defines a full coreflective
embedding. In Section~\ref{sec:4} we consider the special case of
symmetric Frith frames and of (transitive and totally bounded) uniform
frames. In particular, we show that the category of symmetric Frith
frames is equivalent to the category of transitive and totally bounded
uniform frames, and we show that the symmetric Frith frames form a
full reflective and coreflective subcategory of the category of Frith
frames. Section~\ref{sec:7} is devoted to the characterization of
complete Frith frames and of the completion of a Frith frame.  More
precisely, in Section~\ref{sec:8} we discuss completion using
\emph{dense extremal epimorphisms}, while in Section~\ref{sec:9} we
characterize complete Frith frames using \emph{Cauchy maps}.
  Finally, in Section~\ref{sec:10}, we discuss possible connections
  with existing literature on \emph{frames with bases}.

\section{Preliminaries}\label{sec:prelim}
In this section we state the basic results and set up the
notation that will be used in the rest of the paper.  The reader is
assumed to have basic knowledge of category theory, as well as some
acquaintance with general topics from pointfree topology.
For more on category theory, the reader is referred
to~\cite{MacLane1998}. For further reading on frame theory, including
uniform and quasi-uniform frames, we refer
to~\cite{picadopultr2011frames}.  Although not strictly needed, some
background on quasi-uniform spaces may also be useful. For this topic,
we refer to~\cite{FletcherLindgren1982}.
\subsection{The category of frames and the $\Top$ - $\Frm$ dual
  adjunction}

A \emph{frame} is a complete lattice~$(L, \bigvee, \bigwedge, 0, 1)$
such that for every $a \in L$ and $\{b_i\}_{i \in I} \subseteq L$ the
following equality holds:
\[a \wedge \bigvee_{i \in I} b_i = \bigvee_{i \in I}(a \wedge b_i).\]
A \emph{frame homomorphism} is a map $h: L \to M$ that preserves
finite meets and arbitrary joins (including empty meets and empty
joins). 
%
We denote by $\Frm$ the category of frames and frame homomorphisms.
It is well-known that in $\Frm$ the one-to-one homomorphisms are
precisely the monomorphisms, while the onto homomorphisms are the
extremal epimorphisms.\footnote{In a category~$\cC$, an
  epimorphism~$e$ is \emph{extremal} if whenever $e=m\circ g$ with $m$
  a monomorphism we have that $m$ is an isomorphism.} Extremal
epimorphisms of frames will be simply called \emph{surjections}.
A frame homomorphism~$h: L \to M$ is \emph{dense} provided $h(a) = 0$
implies $a = 0$. 


The frame distributive law guarantees that, for every element $a \in
L$, there is a greatest element $a^*$, called the
\emph{pseudocomplement} of $a$, satisfying $a \wedge a^* = 0$. That
is, $a^*$ is defined by the following property:
\begin{equation}
  \forall x \in L, \ x \wedge a = 0 \iff x \leq a^*.\label{eq:5}
\end{equation}
When $a \vee a^* = 1$, we have that $a$ is \emph{complemented} and
$a^*$ is the \emph{complement} of~$a$. 

Because the two finitary distributive laws are duals of each other,
every frame is a bounded distributive lattice.  The category of
bounded distributive lattices and lattice homomorphisms will be
denoted by~$\dlat$. We will only consider lattices that are
distributive and have empty meets and empty joins (that is, a top and a
bottom element). Therefore we will sometimes omit the words
``bounded'' and ``distributive'' when referring to a lattice.  For a
frame $L$ and a subset $R\se L$, we denote as $\langle R
\rangle_\dlat$ the sublattice of $L$ generated by $R$. The expression
$\langle R \rangle_\Frm$ denotes the subframe of $L$ generated by $R$.

Finally, we denote by \Top{} the category of topological spaces and
continuous functions. At the base of pointfree topology is the
classical adjunction $\Omega: \Top \adj \Frm^{\rm op}: \pt$ between
the category of topological spaces and (the dual of the) category of
frames~\cite{johnstone82, picadopultr2011frames}. Points of a frame
$L$ will be seen as frame homomorphisms $p:L \to \two$ where $\two :=
\{0 < 1\}$ denotes the two-element lattice. The basic open sets that
generate the topology on~$\pt(L)$ will be denoted by
\[\widehat a := \{p \in \pt(L) \mid p(a) = 1\},\] for each $a \in
L$.
\subsection{The congruence frame}
Frame surjections may be characterized, up to isomorphism, via frame
congruences: if $h: L \twoheadrightarrow M$ is a surjection, then its
kernel $\ker (h) := \{(x, y) \mid h(x) = h(y)\}$ is a frame
congruence, and conversely, every frame congruence~$\theta$ induces a
frame surjection $L \twoheadrightarrow L/{\theta}$. These two
assignments are mutually inverse. Congruences are closed under
arbitrary intersections, and because of this they form a closure
system within the poset of all binary relations on a frame. For a
frame $L$, we say that the congruence \emph{generated} by a relation
$\rho\se L\times L$ is the smallest congruence containing $\rho$, and
we denote it by $\overline{\rho}$. The set $\cC L$ of all frame
congruences of a frame~$L$ is naturally ordered by inclusion, which
endows $\cC L$ with a frame structure given by
\[\bigwedge_{i \in I} \theta_i = \bigcap_{i \in I}\theta_i \qquad
  \text{and}\qquad \bve_{i \in I}\theta_i \,= \,\overline{\bigcup_{i
      \in I}\theta_i} \, =\, \bca \{\theta\in \cC L\mid \bcu_{i \in I}
  \theta_i\se \theta\}\]
for every family of congruences $\{\theta_i\}_{i \in I}$.  There are
two types of distinguished congruences: the so-called closed
ones, and the so-called \emph{open} ones.  These are, respectively,
the congruences~$\nabla_a$ generated by an element of the form $(0,
a)$, and the congruences~$\Delta_a$ generated by an element of the
form $(a, 1)$, for some $a \in L$.  It is not hard to see that
\[\nabla_a = \{(x, y) \mid x \vee a = y \vee a\}\qquad
  \text{and}\qquad\Delta_a = \{(x, y) \mid x \wedge a = y \wedge
  a\}.\]
We have the following basic results about open and closed congruences,
where $\id_L$ denotes the identity congruence $\{(a,a)\mid a\in L\}$
on~$L$.
\begin{lemma}\label{basic} For a frame $L$, and elements $a, b, a_i
  \in L$ ($i \in I$), the following holds:
  \begin{itemize}
  \item $\del_0=L\times L$ and $\del_1=\id_L$;
  \item $\na_0=\id_L$ and $\na_1=L\times L$;
  \item $\Delta_a \vee \Delta_b = \Delta_{a \wedge b}$ and $\bca_{i
      \in I} \del_{a_i}=\del_{\bve_{i \in I} a_i}$;
  \item $\bve_{i\in I} \na_{a_i}=\na_{\bve_{i \in I} a_i}$ and
    $\nabla_a \wedge \nabla_b = \nabla_{a \wedge b}$.
  \end{itemize}
\end{lemma}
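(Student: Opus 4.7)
\emph{Overall plan.} All four identities can be verified by direct computation using the explicit descriptions
\[\Delta_a=\{(x,y)\mid x\wedge a=y\wedge a\}\quad\text{and}\quad \nabla_a=\{(x,y)\mid x\vee a=y\vee a\}\]
given just above the statement. The first two bullets are immediate from substituting $a=0$ and $a=1$ in the formulas. Observe that $\nabla_{(-)}$ is monotone in its subscript while $\Delta_{(-)}$ is antitone, which reflects the shape of the two remaining bullets.

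For $\bigcap_i \Delta_{a_i}=\Delta_{\bigvee_i a_i}$, one containment is obtained by joining the identities $x\wedge a_i=y\wedge a_i$ over $i$ and invoking frame distributivity, and the converse by meeting $x\wedge\bigvee_i a_i=y\wedge\bigvee_i a_i$ with an arbitrary $a_j$, using $a_j\leq\bigvee_i a_i$. The identity $\nabla_a\wedge\nabla_b=\nabla_{a\wedge b}$ follows from $(x\vee a)\wedge(x\vee b)=x\vee(a\wedge b)$ (distributivity) for $\subseteq$, and from joining $x\vee(a\wedge b)=y\vee(a\wedge b)$ with $a$ (respectively $b$) for $\supseteq$. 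For $\Delta_a\vee\Delta_b=\Delta_{a\wedge b}$, the inclusion $\subseteq$ is immediate, since both $\Delta_a$ and $\Delta_b$ are contained in $\Delta_{a\wedge b}$; for the converse, given $x\wedge a\wedge b=y\wedge a\wedge b$, I would exhibit the zigzag
\[x\mathrel{\Delta_a}(x\wedge a)\mathrel{\Delta_b}(y\wedge a)\mathrel{\Delta_a}y,\]
whose middle step holds precisely by the assumed identity.

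The subtlest piece is $\bigvee_i\nabla_{a_i}=\nabla_{\bigvee_i a_i}$. The inclusion from left to right is routine: each $\nabla_{a_i}$ sits inside $\nabla_{\bigvee_j a_j}$. The converse is the main obstacle, because the description of $\bigvee_i\nabla_{a_i}$ is much less explicit than the right-hand side. The key input is that a \emph{frame} congruence $\theta$ is closed under arbitrary joins of related pairs, that is, $(u_i,v_i)\in\theta$ for all $i$ implies $(\bigvee_i u_i,\bigvee_i v_i)\in\theta$. Applying this to the pairs $(0,a_i)\in\nabla_{a_i}\subseteq\bigvee_i\nabla_{a_i}$ yields $(0,\bigvee_i a_i)\in\bigvee_i\nabla_{a_i}$, and since $\nabla_{\bigvee_i a_i}$ is by definition the smallest congruence containing the pair $(0,\bigvee_i a_i)$, the desired inclusion follows.
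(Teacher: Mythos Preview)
Your proof is correct. The paper states Lemma~\ref{basic} as a collection of ``basic results'' and offers no proof at all, so there is nothing to compare against; your argument simply fills in the omitted verification.

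A few minor remarks for polish. In the zigzag for $\Delta_a\vee\Delta_b\supseteq\Delta_{a\wedge b}$, it may be worth stating explicitly that the join in $\cC L$ is computed as the congruence generated by the union, and hence is transitive, so the three-step chain indeed yields $(x,y)\in\Delta_a\vee\Delta_b$. For the key step in $\bigvee_i\nabla_{a_i}\supseteq\nabla_{\bigvee_i a_i}$, your appeal to closure of frame congruences under componentwise joins is exactly right; if you want a one-line justification, note that a congruence~$\theta$ is the kernel pair of a frame surjection $q\colon L\twoheadrightarrow L/\theta$, and $q$ preserves arbitrary joins.
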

Open and closed congruences suffice to generate~$\cC L$, meaning that
if we close the collection $\{\del_a, \na_a\mid a\in L\}$ under finite
meets and the resulting set under arbitrary joins we get the whole
congruence frame $\cC L$.  In this paper, some subframes of~$\cC L$
will be particularly relevant, namely those generated by a set of the
form $\{\nabla_a \mid a \in L\} \cup \{\Delta_s \mid s \in S\}$, and
denoted $\cC_SL$, for some subset $S \subseteq L$.  We note that the
assignment $a \mapsto \nabla_a$ defines a frame embedding $\nabla: L
\hookrightarrow \cC_SL$. For that reason, we will often treat $L$ as a
subframe of $\cC_SL$. The following generalizes a well-known property
of the congruence frame:

\begin{proposition}[{\cite[Theorem~16.2]{Wilson1994TheAT}}]\label{p:10}
  Let $L$ and $M$ be frames and $S\se L$ be a subset. Then, every
  frame homomorphism $h:L\ra M$ such that $h(s)$ is complemented for
  all $s\in S$ may be uniquely extended to a frame homomorphism
  $\widetilde{h}:\cC_S L\ra M$, so that the following diagram
  commutes:
  \begin{center}
    \begin{tikzpicture}
      [->, node distance = 20mm] \node (A) {$L$}; \node[right of = A]
      (B) {$\cC_S L$}; \node[below of = B, yshift = 2mm] (C) {$M$};
      \draw[right hook->] (A) to node[above, ArrowNode] {$\nabla$} (B);
      \draw[dashed] (B) to node[right, ArrowNode] {$\widetilde h$}
      (C); \draw (A) to node[below, ArrowNode] {$h$}(C);
    \end{tikzpicture}
  \end{center}
\end{proposition}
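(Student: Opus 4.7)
The statement is a universal property of the subframe $\cC_S L$ of $\cC L$, generalising the classical case $S=L$ in which $\cC L$ is the free frame obtained from $L$ by adjoining a complement to every element.

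\textbf{Uniqueness.} The set $\{\nabla_a\mid a\in L\}\cu\{\Delta_s\mid s\in S\}$ generates $\cC_S L$ as a frame, so any homomorphism out of $\cC_S L$ is determined by its values on these generators. Commutativity of the diagram forces $\widetilde h(\nabla_a)=h(a)$ for every $a\in L$. Using Lemma~\ref{basic} together with frame distributivity, one checks that $\nabla_s$ and $\Delta_s$ are complements in $\cC L$ for every $s\in L$: the equality $\nabla_s\we\Delta_s=\id_L$ follows from the lattice-theoretic fact that $x\ve s=y\ve s$ together with $x\we s=y\we s$ forces $x=y$, while $\nabla_s\ve\Delta_s=L\times L$ is witnessed by the chain $(x,x\ve s)\in\nabla_s$, $(x\ve s,y\ve s)\in\Delta_s$, $(y\ve s,y)\in\nabla_s$. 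Since frame homomorphisms preserve existing complements, one must have $\widetilde h(\Delta_s)=h(s)^*$, which exists by the hypothesis on $S$.

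\textbf{Existence.} My plan is to define $\widetilde h$ via a normal form for elements of $\cC_S L$. Combining Lemma~\ref{basic} with frame distributivity, every element $\theta\in\cC_S L$ can be written as a join
\[
\theta \,=\, \bve_{i\in I}\Bigl(\nabla_{a_i}\we\bwe_{t\in T_i}\Delta_t\Bigr),
\]
where $a_i\in L$ and $T_i\se S$ is finite. I would then set
\[
\widetilde h(\theta) \,:=\, \bve_{i\in I}\Bigl(h(a_i)\we\bwe_{t\in T_i}h(t)^*\Bigr),
\]
verify that this assignment is independent of the chosen representation, and check that it defines a frame homomorphism extending $h$ (commutativity with $\nabla$ is immediate from the formula).

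\textbf{Main obstacle.} The substantive technical point is well-definedness of $\widetilde h$: one must show that two different normal forms representing the same congruence in $\cC_S L$ yield the same value in $M$. A conceptually cleaner alternative, which I would prefer to pursue, is to realize $\cC_S L$ as a frame pushout encoding the universal way to adjoin, on top of $L$, complements for the elements of $S$. The universal property of the pushout would then supply $\widetilde h$ automatically, and the real work shifts to establishing this pushout description of $\cC_S L$; this can in turn be done by showing that the pushout and $\cC_S L$ satisfy the same universal property relative to the embedding $\nabla\colon L\inclu\cC L$.
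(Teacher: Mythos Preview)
The paper does not prove this proposition: it is stated with a citation to Wilson's thesis and used as a black box thereafter, so there is no internal argument to compare against.

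Assessing your proposal on its own terms: the uniqueness half is correct and complete. Your verification that $\nabla_s$ and $\Delta_s$ are complementary in $\cC L$ is right, and the conclusion $\widetilde h(\Delta_s)=h(s)^*$ follows as you say.

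The existence half, however, is a plan rather than a proof, and you say so yourself. Both routes you outline are viable. For the normal-form route, well-definedness is indeed the crux; one clean way to handle it is not to start from an arbitrary presentation of $\theta$ but to \emph{define}
\[
\widetilde h(\theta)\;:=\;\bve\bigl\{\,h(a)\we h(s)^*\;\bigm|\;a\in L,\ s\in\langle S\rangle_{\dlat},\ \nabla_a\we\Delta_s\le\theta\,\bigr\},
\]
which is manifestly well-defined and order-preserving, and then check directly that it preserves finite meets and arbitrary joins (the latter using that the generators $\nabla_a\we\Delta_s$ are closed under finite meets and join-dense in $\cC_S L$). For the pushout route, the standard realisation is to present $\cC_S L$ as the quotient of the frame coproduct $L\oplus F$, with $F$ free on symbols $\{u_s\mid s\in S\}$, by the relations making each $\iota_2(u_s)$ the complement of $\iota_1(s)$; the universal property of coproduct-then-quotient then yields $\widetilde h$ immediately, and identifying this quotient with the subframe $\cC_S L\se\cC L$ is a short comparison of universal properties against the classical $S=L$ case. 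Either completion is routine, but as written your existence argument is not yet a proof.
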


In particular, we have the following:

\begin{corollary}\label{c:1}
  Let $h: L \to M$ be a frame homomorphism, and let $S \subseteq L$
  and $T \subseteq M$ be subsets such that $h[S] \subseteq T$. Then,
  $h$ may be uniquely extended to a frame homomorphism $\overline h:
  \cC_SL \to \cC_TM$. Moreover, for every $a \in L$ and $s \in S$, the
  following equalities hold:
  \[\overline{h}(\nabla_a) = \nabla_{h(a)}\qquad \text{and}\qquad
    \overline{h}(\Delta_s) = \Delta_{h(s)}.\]
\end{corollary}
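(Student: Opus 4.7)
The plan is to deduce Corollary~\ref{c:1} as a direct instance of Proposition~\ref{p:10}, applied to the composite frame homomorphism $\nabla_M\circ h \colon L \to \cC_TM$, where $\nabla_M \colon M \hookrightarrow \cC_TM$ denotes the canonical embedding. What needs checking is precisely that $(\nabla_M\circ h)(s) = \nabla_{h(s)}$ is complemented in $\cC_TM$ for every $s \in S$.

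First, I would observe that the hypothesis $h[S]\subseteq T$ ensures $\Delta_{h(s)} \in \cC_TM$ whenever $s \in S$. Using Lemma~\ref{basic}, together with distributivity of $L$ (which gives $\nabla_{h(s)} \wedge \Delta_{h(s)} = \id_M$, since $x\vee h(s)=y\vee h(s)$ and $x\wedge h(s) = y\wedge h(s)$ force $x=y$, and $\nabla_{h(s)}\vee \Delta_{h(s)} = M\times M$ via transitivity through $h(s)$), we conclude that $\Delta_{h(s)}$ is the complement of $\nabla_{h(s)}$ in $\cC M$; since $\cC_TM$ is a subframe of $\cC M$ containing both elements, this complementation persists in $\cC_TM$. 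Proposition~\ref{p:10} now yields a unique frame homomorphism $\overline h\colon \cC_SL \to \cC_TM$ satisfying $\overline h \circ \nabla_L = \nabla_M \circ h$, which is exactly the requirement $\overline h(\nabla_a) = \nabla_{h(a)}$ for all $a\in L$.

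For the second identity, I would simply invoke that frame homomorphisms preserve complements of complemented elements: since $\Delta_s$ is the complement of $\nabla_s$ in $\cC_SL$ (by the same argument as above applied in $L$), we get
\[\overline h(\Delta_s) \,=\, \overline h(\nabla_s)^* \,=\, \nabla_{h(s)}^* \,=\, \Delta_{h(s)}.\]
Uniqueness of $\overline h$ is automatic from the generation of $\cC_SL$ by $\{\nabla_a\mid a\in L\}\cup\{\Delta_s\mid s\in S\}$: any extension satisfying the two displayed equalities is forced on a generating set and therefore on all of $\cC_SL$. There is no real obstacle here; the only subtlety is simply making sure the ambient congruence frame in which we verify complementation is large enough to contain $\Delta_{h(s)}$, which is guaranteed precisely by the assumption $h[S]\subseteq T$.
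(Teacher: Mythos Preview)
Your proposal is correct and follows exactly the route the paper intends: the corollary is stated immediately after Proposition~\ref{p:10} with only the words ``In particular, we have the following'' and no further argument, so the intended proof is precisely to apply Proposition~\ref{p:10} to the composite $\nabla_M\circ h\colon L\to \cC_TM$, using $h[S]\subseteq T$ to guarantee that each $\nabla_{h(s)}$ is complemented in $\cC_TM$. Your treatment of the second identity via preservation of complements, and of uniqueness via the generating set, simply spells out what the paper leaves implicit.
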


\subsection{Compact, coherent, and zero-dimensional
  frames.}\label{sec:3}

Let $L$ be a frame. Recall that an element $a \in L$ is \emph{compact}
if whenever $a \leq \bigvee_{i \in I}a_i$ there exists a finite subset
$F \subseteq I$ such that $a \leq\bigvee_{i \in F} a_i$, and~$L$ is
\emph{compact} if its top element is compact. 
We
denote by $K(L)$ the set of compact elements of a frame~$L$.
If $K(L)$ is a join-dense sublattice of~$L$, then we say that $L$ is
\emph{coherent}. We denote by $\bd{CohFrm}$ the subcategory of $\Frm$
whose objects are the coherent frames, and morphisms are those frame
homomorphisms that preserve compact elements. Coherent frames will be
a central concept in this paper.

Given a bounded distributive lattice $S$, we denote by $\idl(S)$ its
\emph{ideal completion}. We will often see $S$ as a sublattice
of~$\idl(S)$, by identifying an element $s \in S$ with the principal
ideal ${\downarrow}s$ it generates. The assignment $S \mapsto \idl(S)$
is part of a functor $\idl(-): \dlat \to \Frm$ which is an equivalence
of categories when co-restricted to $\cohfrm$.


\begin{proposition}[{\cite[pages 59 and 65]{Johnstone85}}]\label{p:15}
  The category~$\Frm$ of frames is a reflective subcategory of the
  category~$\dlat$ of bounded distributive lattices. The reflector
  $\idl(-): \dlat \to \Frm$ sends a lattice $S$ to its ideal
  completion $\idl(S)$ and a lattice homomorphism $h: S \to T$ to
  \[\idl(h): \idl(S) \to \idl(T), \ J \mapsto
    \langle{h[J]}\rangle_{\idl} = {\downarrow}h[J].\]
  Moreover, the corestriction of $\idl(-)$ to \cohfrm{} induces an
  equivalence of categories whose inverse $K(-): \cohfrm \to \dlat$
  sends a coherent frame to its sublattice of compact elements, and a
  morphism to its suitable restriction and co-restriction.
\end{proposition}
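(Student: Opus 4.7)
The plan is to treat the two assertions separately: first the reflection of $\Frm$ into $\dlat$, and then the equivalence on the coherent side.

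For the reflection, I would begin by checking that for any bounded distributive lattice $S$, the poset $\idl(S)$ of ideals of $S$, ordered by inclusion, is a frame: arbitrary joins are computed as $\bigvee_i J_i = \langle \bigcup_i J_i\rangle_{\idl}$, finite meets are set-theoretic intersections, and the frame distributive law follows from the fact that an element of $\bigcup_i J_i$ together with an element of $K$ can be met pairwise inside the original lattice $S$ using its distributivity. Next I would verify that $\eta_S : S \to \idl(S)$, $s \mapsto {\downarrow}s$, is a lattice homomorphism. The core of the reflection is the universal property: given a frame $M$ and a lattice homomorphism $h : S \to M$, the map $\widetilde h : \idl(S) \to M$ defined by $\widetilde h(J) = \bigvee\{h(s) \mid s \in J\}$ is the unique frame homomorphism making $\widetilde h \circ \eta_S = h$. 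Preservation of finite meets uses that the join is directed (every ideal is up-directed under binary join), and uniqueness follows because $\eta_S[S]$ is join-dense in $\idl(S)$. The action of $\idl(-)$ on morphisms and functoriality are then routine, and the formula $\idl(h)(J) = {\downarrow}h[J]$ matches the universal extension.

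For the equivalence between $\cohfrm$ and $\dlat$, the key ingredient is the description of compact elements of an ideal completion: $K(\idl(S)) = \{{\downarrow}s \mid s \in S\}$. Indeed, a principal ideal is clearly compact because joins in $\idl(S)$ are unions of ideals. Conversely, if $J$ is compact then, writing $J = \bigvee_{s \in J}{\downarrow}s$, one gets $J = {\downarrow}s_1 \vee \cdots \vee {\downarrow}s_n = {\downarrow}(s_1 \vee \cdots \vee s_n)$. This gives a lattice isomorphism $S \cong K(\idl(S))$, yielding the unit of the equivalence. For the counit, given a coherent frame $L$ I would show that the map $\varepsilon_L : \idl(K(L)) \to L$, $J \mapsto \bigvee J$, is a frame isomorphism: surjectivity uses join-density of $K(L)$ in $L$, and injectivity follows because if $\bigvee J = \bigvee J'$ then compactness of elements of $J$ forces $J \subseteq J'$ and vice versa.

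Finally, I would verify that $K(-)$ is functorial on $\cohfrm$. Here the point is that a $\cohfrm$-morphism $f : L \to M$ is precisely a frame homomorphism such that $f[K(L)] \subseteq K(M)$, so $K(f) : K(L) \to K(M)$ is a well-defined lattice homomorphism. Conversely, any frame homomorphism between coherent frames is determined by its restriction to compact elements via join-density, and this restriction can be recovered from any lattice homomorphism $g : K(L) \to K(M)$ by setting $f(x) = \bigvee\{g(k) \mid k \in K(L),\ k \leq x\}$. Naturality of $\eta$ and $\varepsilon$ is then straightforward. The step most likely to require care is the verification that the universal extension $\widetilde h$ actually preserves finite meets, since one must exploit both the directedness of ideals and the distributivity of $M$; everything else reduces to bookkeeping with joins and principal ideals.
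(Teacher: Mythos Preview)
The paper does not supply its own proof of this proposition: it is stated as a citation to Johnstone~\cite{Johnstone85} (pages 59 and 65), with no accompanying \texttt{proof} environment. So there is nothing in the paper to compare your argument against.

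That said, your sketch is the standard proof and is correct. The only point I would flag is a small imprecision in your description of compact elements: you write ``joins in $\idl(S)$ are unions of ideals,'' which is not literally true (the union need not be an ideal). What you need, and what you actually use, is that every ideal is the \emph{directed} union of its principal subideals, so compactness of ${\downarrow}s$ follows because ${\downarrow}s \leq \bigvee_i J_i$ forces $s \in J_i$ for some $i$ once the join is expressed as a directed union. This is harmless, but worth stating precisely.
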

\begin{corollary}\label{c:7}
  Every lattice homomorphism $h: S \to L$, with $L$ a frame, uniquely
  extends to a frame homomorphism $\widehat h: \idl(S) \to L$ defined
  by $\widehat h(J):= \bigvee h[J]$.
\end{corollary}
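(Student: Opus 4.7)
The plan is to derive this corollary directly from the reflection established in Proposition~\ref{p:15}. The first step is to identify the unit of the adjunction $\idl(-) \dashv U$, where $U: \Frm \to \dlat$ is the forgetful functor: it is the lattice embedding $\eta_S: S \to \idl(S)$, $s \mapsto {\downarrow}s$. Once this is in place, the universal property of the reflection immediately supplies, for any lattice homomorphism $h: S \to L$ with $L$ a frame, a \emph{unique} frame homomorphism $\widehat h: \idl(S) \to L$ satisfying $\widehat h \circ \eta_S = h$. This already gives the existence and uniqueness parts of the corollary.

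The remaining task is to verify that this unique $\widehat h$ is given by the explicit formula $J \mapsto \bigvee h[J]$. The key observation is that in $\idl(S)$ every ideal decomposes as the directed join $J = \bigvee_{s \in J}{\downarrow}s$ of its principal sub-ideals. Since $\widehat h$ preserves arbitrary joins and agrees with $h$ on principal ideals (by the triangle $\widehat h \circ \eta_S = h$), we obtain
\[\widehat h(J) \;=\; \bigvee_{s \in J} \widehat h({\downarrow}s) \;=\; \bigvee_{s \in J} h(s) \;=\; \bigvee h[J],\]
which is the desired description.

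I do not expect a real obstacle here, since both ingredients are already packaged inside Proposition~\ref{p:15}; the only subtlety is keeping track of the unit and checking that the formula above does define a frame homomorphism (preservation of the top uses $h(1_S) = 1_L$; preservation of binary meets uses the fact that ideals are directed, together with frame distributivity in $L$), but this is routine and need not be spelled out in detail once the universal property has been invoked.
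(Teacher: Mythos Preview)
Your proposal is correct and follows exactly the route the paper intends: the corollary is stated immediately after Proposition~\ref{p:15} with no separate proof, so deriving it from the universal property of the reflection $\idl(-) \dashv U$ (together with the observation that every ideal is the join of its principal sub-ideals) is precisely what is meant.
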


By definition of coherent frame homomorphism, it is not hard to verify
that $K(-): \cohfrm \to \dlat$ is both a right and a left adjoint of
$\idl(-): \dlat \to \cohfrm$. By suitably composing the adjunctions
$\cohfrm \adj \dlat$ and $\dlat \adj \Frm$, and using the equivalence
$\cohfrm \cong \dlat$, we obtain the following
(see~\cite[Proposition~1]{banaschewski1981} for a direct proof):
\begin{proposition}\label{p:16}
  The category $\cohfrm$ of coherent frames and coherent frame
  homomorphisms is a coreflective subcategory of the category $\Frm$
  of frames and frame homomorphisms. The coreflector is the
  restriction and co-restriction $\idl(-):\Frm \to \cohfrm$ of the
  functor $\idl(-)$.
\end{proposition}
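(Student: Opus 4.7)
The plan is to assemble the coreflection by composing adjunctions already established in Proposition~\ref{p:15}. Write $U:\Frm\ra\dlat$ for the inclusion (every frame is a bounded distributive lattice); Proposition~\ref{p:15} states that $\idl(-)\dashv U$. The same proposition yields the equivalence $\idl(-):\dlat\simeq\cohfrm:K(-)$, and the paragraph preceding the statement notes that, in this equivalence, $K(-)$ is \emph{both} a left and a right adjoint of $\idl(-)$; in particular $K(-)\dashv\idl(-)$ as functors between $\dlat$ and $\cohfrm$.

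First I would observe that the inclusion $\iota:\cohfrm\hookrightarrow\Frm$ factors, up to natural isomorphism, as $\idl(-)\circ K(-):\cohfrm\ra\dlat\ra\Frm$, because $C\cong\idl(K(C))$ for every coherent frame $C$ and a coherent frame homomorphism is by definition one that restricts to a lattice homomorphism on compact elements. Composing the right adjoints of the two adjunctions above (in the reverse order) then produces a candidate right adjoint of $\iota$, namely $\idl(-)\circ U:\Frm\ra\cohfrm$. Unpacking, this functor takes a frame $L$, forgets its frame structure to view it as a bounded distributive lattice, and returns $\idl(L)$; by Proposition~\ref{p:15} this is automatically coherent, which justifies the claim that it is the ``restriction and co-restriction of $\idl(-)$''.

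To verify the adjunction, I would exhibit the natural chain of bijections
\begin{align*}
  \Frm(\iota(C),L)\;&\cong\; \Frm(\idl(K(C)),L) \\
  &\cong\; \dlat(K(C),U(L)) \\
  &\cong\; \cohfrm(C,\idl(U(L))),
\end{align*}
for $C\in\cohfrm$ and $L\in\Frm$, where the first isomorphism uses coherence of $C$, the second is the reflection $\idl(-)\dashv U$, and the third is the adjunction $K(-)\dashv\idl(-)$. Naturality in each variable is inherited from the naturality of the composing adjunctions, and the counit of the resulting adjunction at $L\in\Frm$ is the frame homomorphism $\idl(L)\ra L$ given on ideals by $J\mapsto\bigvee J$, in accordance with Corollary~\ref{c:7}.

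No step is genuinely hard; the only real pitfall is that the symbol $\idl(-)$ carries two meanings in the argument (as reflector $\dlat\ra\Frm$ and as one half of the equivalence $\dlat\simeq\cohfrm$), so care must be taken to specify which codomain is intended at each stage. Once this bookkeeping is straight, the proposition is a formal consequence of composing the two adjunctions already supplied by Proposition~\ref{p:15}.
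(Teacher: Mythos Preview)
Your proposal is correct and follows essentially the same approach as the paper: the paper's argument is precisely the sentence ``By suitably composing the adjunctions $\cohfrm \adj \dlat$ and $\dlat \adj \Frm$, and using the equivalence $\cohfrm \cong \dlat$,'' and you have unpacked this composition explicitly via the hom-set bijections, also identifying the counit as $J\mapsto\bigvee J$. Your remark on the two roles of $\idl(-)$ is apt bookkeeping, but there is no substantive difference in strategy.
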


Finally, we say that a frame $L$ is \emph{zero-dimensional} if the
sublattice $B(L)$ of its complemented elements join-generates the
frame.

\begin{proposition}\label{p:12}
  Let $L$ be a frame. Then,
  \begin{enumerate}
  \item\label{item:14} if $L$ is compact, then every complemented
    element is compact;
  \item\label{item:17} if $L$ is zero-dimensional, then every compact
    element is complemented.
  \end{enumerate}
  In particular, every compact and zero-dimensional frame is coherent.
\end{proposition}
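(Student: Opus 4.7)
The plan is to prove each of the two parts by a short direct argument that only uses the frame distributive law, the definition of compactness, and the defining property~\eqref{eq:5} of pseudocomplement, and then to deduce the ``in particular'' clause by combining them.

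For part~\textit{(a)}, I would take a complemented element $a\in L$ with complement $a^*$, assume $a\le \bigvee_{i\in I} a_i$, and observe that then $1=a\vee a^*\le a^*\vee \bigvee_{i\in I} a_i$. Compactness of the top element yields a finite $F\se I$ with $1=a^*\vee \bigvee_{i\in F} a_i$; meeting with $a$, using distributivity, and invoking $a\we a^*=0$ gives $a\le \bigvee_{i\in F}a_i$, as required.

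For part~\textit{(b)}, I would use zero-dimensionality to write a compact element $a\in L$ as $a=\bigvee_{i\in I} b_i$ with each $b_i\in B(L)$. Compactness extracts a finite $F\se I$ with $a=\bigvee_{i\in F} b_i$, and a standard computation (using distributivity) shows that a finite join of complemented elements is complemented, with complement the meet of the individual complements. Hence $a\in B(L)$.

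For the ``in particular'' clause, I would combine the two parts: if $L$ is both compact and zero-dimensional, then~\textit{(b)} gives $K(L)\se B(L)$ and~\textit{(a)} gives $B(L)\se K(L)$, so $K(L)=B(L)$. By zero-dimensionality $B(L)$ is join-dense in~$L$, and a short check (which I would include for completeness) shows that $B(L)$ is closed under finite meets and joins and contains $0$ and $1$; therefore $K(L)$ is a join-dense bounded sublattice of~$L$, i.e.\ $L$ is coherent. I do not expect any real obstacle here — the only point worth writing out carefully is the verification that $B(L)$ is a sublattice, since the rest is a clean application of the distributive law.
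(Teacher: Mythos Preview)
Your argument is correct and is the standard one; the paper itself states Proposition~\ref{p:12} in the preliminaries without proof, so there is nothing to compare against beyond noting that your proof supplies exactly the routine verification the authors take for granted.
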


\subsection{Uniform and quasi-uniform frames}

We will introduce (quasi-)uniformities on a frame~$L$ using
\emph{entourages}. Entourages on~$L$ are nothing but certain elements
of the coproduct $L \oplus L$, whose construction we recall now
  (see~\cite[page 60]{Johnstone85} or~\cite[Chapter~IV,
  Section~5]{picadopultr2011frames}).

\begin{definition}{\footnote{This
      is a slight abuse of terminology because \emph{$C$-ideals} are
      originally defined as depending on a parameter $C$~\cite[page
      58]{Johnstone85}. The notion of \emph{$C$-ideal} adopted here is also known
      as \emph{coproduct ideal} or \emph{cp-ideal} (see, for instance,
      \cite{PicadoPultr2018})}}
  Let $L$ be a frame. \emph{We will call \emph{$C$-ideal} of $L$ to} a
  subset $A \subseteq L \times L$ that satisfies the following
  properties:
  \begin{enumerate}[label = (J.\arabic*)]
  \item\label{item:J1} $A$ is a down-set;
  \item\label{item:J2} if $\{(a_i, b)\}_{i \in I}\subseteq A$, then
    $(\bigvee_{i \in I} a_i, b) \in A$;
  \item\label{item:J3} if $\{(a, b_i)\}_{i \in I} \subseteq A$, then
    $(a, \bigvee_{i \in I} b_i) \in A$.
  \end{enumerate}
\end{definition} Given $(a, b) \in L \times L$, we denote by $a
  \oplus b$ the smallest $C$-ideal containing $(a,b)$, that is, $a
  \oplus b = {\downarrow} (a, b) \cup (L \times \{0\}) \cup (\{0\} \times
  L)$.

\begin{proposition}
  The coproduct $L \oplus L$ is isomorphic to the set of all $C$-ideals
  of $L$ ordered by inclusion. The coproduct maps are injections, and
  they are given by
  \[(\iota_1:L \hookrightarrow L \oplus L, \quad a \mapsto a \oplus
    1)\qquad \text{and}\qquad(\iota_2:L \hookrightarrow L \oplus L,
    \quad b \mapsto 1 \oplus b).\]
\end{proposition}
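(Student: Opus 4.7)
The plan is to identify $L \oplus L$ with the poset $\mathcal{C}(L)$ of $C$-ideals ordered by inclusion, by verifying that $\mathcal{C}(L)$ together with the maps $\iota_1, \iota_2$ satisfies the universal property of the coproduct in $\Frm$. First, I would check that $\mathcal{C}(L)$ is itself a frame: since $C$-ideals are manifestly closed under arbitrary intersections, these intersections compute meets, while joins are obtained by closing the union under conditions \ref{item:J1}--\ref{item:J3}; frame distributivity follows from a direct inspection of these closure conditions. Next, I would show that $\iota_1(a) := a \oplus 1$ and $\iota_2(b) := 1 \oplus b$ are frame homomorphisms---preservation of joins is immediate from \ref{item:J2} and \ref{item:J3}, and preservation of finite meets is straightforward from the explicit description of $a \oplus b$---and that they are injective, since $a \oplus 1 = a' \oplus 1$ forces $(a, 1) \in a' \oplus 1$ and hence $a \leq a'$ (symmetrically for $\iota_2$).

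For the universal property, given frame homomorphisms $f_1, f_2 : L \to M$, I would define $h : \mathcal{C}(L) \to M$ by
\[h(A) := \bigvee\{f_1(a) \wedge f_2(b) \mid (a,b) \in A\}.\]
Monotonicity and preservation of arbitrary joins are then immediate from the description of joins in $\mathcal{C}(L)$. The hard step, and the main obstacle, is preservation of binary meets, namely $h(A \cap B) = h(A) \wedge h(B)$: using frame distributivity in $M$, I would expand $h(A) \wedge h(B)$ as the join of all $f_1(a \wedge a') \wedge f_2(b \wedge b')$ with $(a,b) \in A$ and $(a',b') \in B$; the down-set property \ref{item:J1} of $A$ and $B$ then guarantees that $(a \wedge a', b \wedge b') \in A \cap B$, so each such term is bounded above by $h(A \cap B)$, while the reverse inequality is immediate from monotonicity.

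Finally, I would verify commutativity of the coproduct triangles $h \circ \iota_i = f_i$ directly from the form of $a \oplus 1$ and $1 \oplus b$, and establish uniqueness by observing that any frame homomorphism $\widetilde h$ satisfying $\widetilde h \circ \iota_i = f_i$ must send $a \oplus b = \iota_1(a) \wedge \iota_2(b)$ to $f_1(a) \wedge f_2(b)$; since every $C$-ideal satisfies $A = \bigvee_{(a,b) \in A}(a \oplus b)$ in $\mathcal{C}(L)$ (the right-hand side, as a set-theoretic union, already coincides with $A$ by minimality of each $a \oplus b$), this pins down $\widetilde h = h$ on every $C$-ideal.
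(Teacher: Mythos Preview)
The paper does not give its own proof of this proposition: it appears in the preliminaries and is stated as a known result, with references to Johnstone and to Picado--Pultr for the construction. Your outline is the standard argument and is correct.

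Two steps you label ``direct inspection'' or ``immediate'' deserve a word more. First, that $\mathcal{C}(L)$ is a frame is most cleanly argued by observing that the $C$-ideal closure is a nucleus on the frame of down-sets of $L\times L$, so that its fixed points automatically form a frame; a bare verification of the distributive law from the closure conditions is possible but fiddly. Second, preservation of arbitrary joins by your map $h$ is not literally immediate from the description of joins as closures of unions: one must check that passing from a subset $S\subseteq L\times L$ to its $C$-ideal closure $\overline S$ does not enlarge the value $\bigvee\{f_1(a)\wedge f_2(b)\mid (a,b)\in S\}$. This follows once you note that for each $m\in M$ the set $\{(a,b)\mid f_1(a)\wedge f_2(b)\leq m\}$ is itself a $C$-ideal (using that $f_1,f_2$ preserve joins and that $M$ is a frame), hence contains $\overline S$ whenever it contains $S$. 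With these two small clarifications your proof goes through.
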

Let $L$ and $M$ be frames. If $f_i:L \to M$ ($i = 1,2$) are frame
homomorphisms, then we denote by $f_1\oplus f_2$ the unique frame
homomorphism $L \oplus L \to M \oplus M$ given by the universal
property of coproducts.

Any pair of $C$-ideals $A, B \in L \oplus L$ may be composed as follows:
\[A \circ B := \bigvee \{a \oplus b \mid \exists c \neq 0 \colon (a,
  c) \in A, \ (c, b) \in B\},\]
and for every set $\cJ \subseteq L \oplus L$ of $C$-ideals, we have
the following relations on~$L$:
\[b \luni^\cJ_1 a \iff A \circ (b \oplus b) \subseteq (a \oplus a)
  \text{ for some $A \in \cJ$},\] and
\[b \luni^\cJ_2 a \iff (b \oplus b) \circ A \subseteq (a \oplus a)
  \text{ for some $A \in \cJ$}.\]
When $\cJ$ is clear from the context, we may simply
write~$\luni_i$ instead of~$\luni_i^\cJ$. Given $i \in \{1,2\}$, we
denote
\[\cL_i(\cJ):= \{a \in L \mid a = \bigvee \{b \mid b \luni_i a\}\}.\] It
is well-known that, if $\cJ$ is a filter basis, then each $\cL_i(\cJ)$
is a subframe of~$L$.

We can now introduce quasi-uniform frames. We follow the approach in
\cite{Picado95} and \cite{Picado952}. A \emph{(Weil) entourage} on~$L$
is a $C$-ideal $E \subseteq L \oplus L$ that satisfies
\[\bigvee \{a \mid (a, a) \in E\} = 1.\]
It can be shown that every entourage $E$ is contained in $E \circ
E$. If the equality $E = E \circ E$ holds, then~$E$ is said to be
\emph{transitive}.  An entourage $E$ is \emph{finite} if it contains
some finite join $\bigvee_{i = 1}^n a_i \oplus a_i$, with $a_1\vee
\dots \vee a_n = 1$. Notice that every finite intersection of
transitive and finite entourages is again transitive and finite. Also
notice that, if $E$ is an entourage, then so is $E^{-1}:= \{(b, a)
\mid (a, b) \in E\}$. An entourage~$E$ is \emph{symmetric} if $E = E^{-1}$.
\begin{definition}
  A \emph{quasi-uniformity} on $L$ is a subset $\cE \subseteq L \oplus
  L$ of entourages on $L$ such that
  \begin{enumerate}[label = (QU.\arabic*)]
  \item\label{item:QU1} $\cE$ is a filter,
  \item\label{item:QU2} for every $E \in \cE$, there exists some $F
    \in \cE$ such that $F \circ F \subseteq E$,
  \item\label{item:QU3} $L$ is the frame generated by $\cL_1(\cE) \cup
    \cL_2(\cE)$.
  \end{enumerate}
  The set $\cE$ is called a \emph{uniformity} if it further satisfies
  \begin{enumerate}[label = (QU.\arabic*)]
  \setcounter{enumi}{3}
  \item\label{item:QU4} if $E \in \cE$, then $E^{-1} \in \cE$.
  \end{enumerate}
  A \emph{(quasi-)uniform frame} is a pair $(L, \cE)$, where $L$ is a
  frame and $\cE$ is a (quasi-)uniformity on~$L$.
\end{definition}
If $\cE' \subseteq L \oplus L$ is a filter (sub)basis of entourages
that satisfies~\ref{item:QU2} and~\ref{item:QU3}, then we say that
$\cE'$ is a \emph{(sub)basis for a quasi-uniformity} on~$L$. In that
case, the filter generated by~$\cE'$ is a quasi-uniformity. If $\cE'$
consists of symmetric entourages, then the quasi-uniformity it
generates is actually a uniformity.  For every quasi-uniform frame
$(L, \cE)$, the set $\{E \cap E^{-1} \mid E \in \cE\}$ is a basis
consisting of symmetric entourages, and it generates the coarsest
uniformity~$\overline \cE$ on~$L$ that contains~$\cE$. Finally, a
quasi-uniform frame~$(L, \cE)$ is \emph{transitive} if it has a basis
of transitive entourages, and \emph{totally bounded} if it has a basis
of finite entourages.

A \emph{(quasi-)uniform homomorphism} $h: (L, \cE) \to (M, \cF)$ is a
frame homomorphism $h: L \to M$ such that for every $E \in \cE$ we
have $(h \oplus h) (E) \in \cF$. We denote by \qunifrm{} the category
of quasi-uniform frames and quasi-uniform homomorphisms, and by
\unifrm{} the category of uniform frames and uniform frame morphisms. The following proposition is proven in \cite{FletcherHunsakerLindgren1994}, in which the authors use an alternative definition of quasi-uniform frame. This definition is shown in \cite{Picado95} to be equivalent to the one we use.

\begin{proposition}
  [{\cite[Corollary 4.7]{FletcherHunsakerLindgren1994}}]\label{p:11}
  Uniform frames form a full reflective subcategory of quasi-uniform
  frames. The reflector $\sym:\qunifrm \to \unifrm$ maps a
  quasi-uniform frame $(L, \cE)$ to $(L, \overline \cE)$, and a
  morphism to itself.
\end{proposition}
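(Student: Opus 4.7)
The plan is to establish the reflection by exhibiting the unit of the adjunction and verifying the universal property, with the symmetric closure $\overline{\cE}$ playing the role of the reflector on objects.

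First I would verify that $\sym$ is well-defined on objects: given $(L,\cE)$, the set $\{E\cap E^{-1}\mid E\in\cE\}$ is a filter basis of symmetric entourages, and it generates a uniformity $\overline\cE$ containing $\cE$ (this is essentially noted in the preliminary discussion). Next, I would define the unit $\eta_{(L,\cE)}$ to be the identity frame homomorphism $\id_L$ viewed as a morphism $(L,\cE)\to(L,\overline\cE)$: this is a quasi-uniform morphism because $(\id_L\oplus\id_L)(E)=E\in\cE\subseteq\overline\cE$ for every $E\in\cE$.

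The heart of the argument is the universal property: given a uniform frame $(M,\cF)$ and a quasi-uniform morphism $h:(L,\cE)\to(M,\cF)$, I must show that the same frame map $h$ is also a uniform morphism $(L,\overline\cE)\to(M,\cF)$. Uniqueness of the extension is automatic since $\eta_{(L,\cE)}$ is the identity on underlying frames. For existence, by \ref{item:QU1} applied to $\overline\cE$, it suffices to check that $(h\oplus h)(E\cap E^{-1})\in\cF$ for every $E\in\cE$. The key computational lemma I would isolate is that the coproduct functor commutes with the twist isomorphism $\sigma: N\oplus N\to N\oplus N$ (which sends $a\oplus b$ to $b\oplus a$): by the universal property of coproducts applied to $\iota_2\circ h$ and $\iota_1\circ h$, we obtain $\sigma\circ(h\oplus h)=(h\oplus h)\circ\sigma$. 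Since inversion of entourages is precisely the image under $\sigma$, this yields
\[(h\oplus h)(E^{-1})=((h\oplus h)(E))^{-1}.\]
Combined with the fact that $h\oplus h$ is a frame homomorphism, hence preserves finite meets, we get $(h\oplus h)(E\cap E^{-1})=(h\oplus h)(E)\cap((h\oplus h)(E))^{-1}$. Since $(h\oplus h)(E)\in\cF$ and $\cF$ is closed under inverses by \ref{item:QU4}, this meet lies in $\cF$, as required.

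Functoriality of $\sym$ and the naturality of $\eta$ then follow from the same observations: if $h:(L,\cE)\to(M,\cF)$ is any quasi-uniform morphism, the computation above shows $h:(L,\overline\cE)\to(M,\overline\cF)$ is uniform, and on morphisms $\sym$ is the identity assignment. The main obstacle is really just the interaction between the coproduct map $h\oplus h$ and the inverse operation on $C$-ideals; once this compatibility is established, the rest reduces to checking filter and closure conditions that are already contained in the definitions.
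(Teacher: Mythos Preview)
The paper does not supply its own proof of this proposition; it simply cites \cite[Corollary~4.7]{FletcherHunsakerLindgren1994} and remarks that the equivalence between the two definitions of quasi-uniform frame is handled in \cite{Picado95}. Your direct verification is correct and self-contained: the key step---that $(h\oplus h)$ commutes with the twist automorphism of the coproduct and hence with entourage inversion---is exactly what is needed, and the remainder follows from the filter axioms~\ref{item:QU1} and~\ref{item:QU4}. The only point you leave implicit is fullness of the inclusion $\unifrm\hookrightarrow\qunifrm$, but this is immediate since uniform frames form, by definition, a full subcategory.
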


A (quasi-)uniform homomorphism $h: (L, \cE) \to (M, \cF)$ is an
extremal epimorphism if and only if it is onto and $\cF$ is the
quasi-uniformity generated by $(h \oplus h)[\cE]$.  A (quasi-)uniform
frame $(M, \cF)$ is said to be \emph{complete} provided every dense
extremal epimorphism~$(L, \cE) \twoheadrightarrow (M, \cF)$, with $(L,
\cE)$ a \mbox{(quasi-)uniform} frame, is an isomorphism. A
\emph{completion} of a (quasi-)uniform frame~$(L, \cE)$ is a complete
(quasi-)uniform frame~$(M, \cF)$ together with a dense extremal
epimorphism $(M, \cF) \twoheadrightarrow (L, \cE)$. The next two
results will be needed in the sequel.

\begin{proposition}[{\cite[Proposition~3.3]{Frith1999}}]\label{l:2}
  A quasi-uniform frame~$(L, \cE)$ is complete if and only if so is
  its uniform reflection~$(L, \overline \cE)$.
\end{proposition}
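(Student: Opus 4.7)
The strategy is to prove each implication by lifting dense extremal epimorphisms between $\qunifrm$ and $\unifrm$, relying on the symmetrization reflector $\sym$ of Proposition~\ref{p:11}. For the direction $(L,\overline\cE)$ complete $\Rightarrow$ $(L,\cE)$ complete, let $h\colon(M,\cF)\twoheadrightarrow(L,\cE)$ be a dense extremal epi in $\qunifrm$. I first show that $h\colon(M,\overline\cF)\twoheadrightarrow(L,\overline\cE)$ is still a dense extremal epi in $\unifrm$: density is a property of the underlying frame map (unchanged), and $\overline\cE$ is generated as a uniformity by $(h\oplus h)[\overline\cF]$ because each basis element $E\cap E^{-1}$ of $\overline\cE$ (with $E\in\cE$) contains $(h\oplus h)(F)\cap (h\oplus h)(F^{-1})=(h\oplus h)(F\cap F^{-1})$ for some $F\in\cF$, using that $h\oplus h$ preserves finite meets and commutes with $(-)^{-1}$. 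By completeness, $h$ is an iso in $\unifrm$, in particular a frame iso. Then $h\oplus h$ is an iso, $(h\oplus h)[\cF]$ is itself a quasi-uniformity, and since it generates $\cE$ it coincides with $\cE$, so $h$ is an iso in $\qunifrm$.

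For the converse, assume $(L,\cE)$ is complete and let $h\colon(M,\ca{U})\twoheadrightarrow(L,\overline\cE)$ be a dense extremal epi in $\unifrm$. The plan is to lift $\ca{U}$ to a quasi-uniformity $\cF$ on $M$ such that $h\colon(M,\cF)\twoheadrightarrow(L,\cE)$ becomes a dense extremal epi in $\qunifrm$, and then to invoke the hypothesis. Concretely, since $h\oplus h$ is a frame surjection (coproducts of surjections being surjections in $\Frm$), it has a right adjoint $(h\oplus h)_*$ satisfying $(h\oplus h)\circ(h\oplus h)_*=\id$. For each $E\in\cE\subseteq\overline\cE$, use that $(h\oplus h)[\ca{U}]$ generates $\overline\cE$ to choose $U_E\in\ca{U}$ with $(h\oplus h)(U_E)\subseteq E$, and set $\tilde E:=U_E\vee(h\oplus h)_*(E)$; this is an entourage of $M$ (its diagonal set dominates that of $U_E$, which joins to $1$) satisfying $(h\oplus h)(\tilde E)=E$ by surjectivity of $h\oplus h$. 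Let $\cF$ be the filter generated by $\ca{U}\cup\{\tilde E\mid E\in\cE\}$; then one checks that $\cF$ is a quasi-uniformity, that $(h\oplus h)[\cF]$ generates $\cE$, and that $\overline\cF=\ca{U}$. Completeness of $(L,\cE)$ then forces $h$ to be an iso in $\qunifrm$, hence a frame iso and therefore an iso in $\unifrm$.

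The main obstacle is verifying the quasi-uniform axioms for $\cF$, especially~\ref{item:QU2}: for $\tilde E\in\cF$, one needs a halving $\tilde{E'}\in\cF$ with $\tilde{E'}\circ\tilde{E'}\subseteq\tilde E$, and since the lifts involve the right adjoint $(h\oplus h)_*$, whose interaction with composition is nontrivial, a careful argument is required that combines the halving available in $\ca{U}$ (for the $U_E$ summand) with that in $\cE$ (pulled back via $(h\oplus h)_*$) into a compatible halving of $\tilde E$. Equally, proving $\overline\cF=\ca{U}$ — which is needed so that the iso in $\qunifrm$ genuinely descends to the desired uniform isomorphism — requires showing that the symmetric part of each lifted entourage is already controlled by $\ca{U}$.
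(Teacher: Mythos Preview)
The paper does not prove this proposition; it merely quotes it from Frith and Hunsaker~\cite{Frith1999}, so there is no ``paper's own proof'' to compare against. I will therefore assess your argument on its own.

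Your first implication is fine: applying the reflector $\sym$ to a dense extremal epimorphism in $\qunifrm$ yields one in $\unifrm$ (density is frame-theoretic, and your computation with $E\cap E^{-1}$ and the naturality of $(-)^{-1}$ is correct), and a frame isomorphism transports quasi-uniformities bijectively.

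The second implication, however, has a genuine flaw in the construction of $\cF$. You take $\cF$ to be the filter generated by $\cU\cup\{\tilde E\mid E\in\cE\}$, so in particular $\cU\subseteq\cF$. But then $(h\oplus h)[\cU]\subseteq(h\oplus h)[\cF]$, and since $h:(M,\cU)\twoheadrightarrow(L,\overline\cE)$ is an extremal epimorphism, the filter generated by $(h\oplus h)[\cU]$ is all of $\overline\cE$. Hence the filter generated by $(h\oplus h)[\cF]$ contains $\overline\cE$, and it cannot equal $\cE$ unless $\cE$ was already a uniformity. In other words, your $h:(M,\cF)\to(L,\cE)$ is not even a quasi-uniform morphism, let alone an extremal epimorphism onto $(L,\cE)$.

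Note incidentally that your lifts simplify: since $(h\oplus h)(U_E)\subseteq E$ means $U_E\subseteq(h\oplus h)_*(E)$, you actually have $\tilde E=(h\oplus h)_*(E)$. Dropping $\cU$ and letting $\cF$ be the filter generated by $\{(h\oplus h)_*(E)\mid E\in\cE\}$ does give a morphism to $(L,\cE)$, and density of $h$ yields $(h\oplus h)(A\circ B)\subseteq(h\oplus h)(A)\circ(h\oplus h)(B)$, from which~\ref{item:QU2} follows. The remaining obstacle is then~\ref{item:QU3}: one must show that $\cL_1(\cF)\cup\cL_2(\cF)$ still generates $M$, and this does not follow from $\cU$ alone once $\cU\not\subseteq\cF$. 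The proof in~\cite{Frith1999} avoids this by working with the explicit construction of the completion rather than by lifting an arbitrary dense surjection.
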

\begin{proposition}[{\cite[Chapter VII, Proposition
    2.2.2]{picadopultr2011frames}}]\label{p:22}
  Let $h:(L, \cE) \twoheadrightarrow (M, \cF)$ be a dense extremal
  epimorphism of uniform frames. If $M$ is compact, then $h$ is an
  isomorphism.
\end{proposition}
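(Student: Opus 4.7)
My plan is as follows. Since $h$ is a frame extremal epimorphism, it is surjective, so it admits a right adjoint $h_* \colon M \to L$ satisfying $h \circ h_* = \id_M$. To conclude that $h$ is an isomorphism it suffices to show $h_* \circ h = \id_L$, and since $a \le h_*(h(a))$ always holds as the unit of the adjunction, the task reduces to proving $h_*(h(a)) \le a$ for every $a \in L$.

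Fix $a \in L$ and set $c := h_*(h(a))$, so that $h(c) = h(a)$ and $a \le c$. I would exploit the regularity built into the uniform structure: for a suitable symmetric entourage $E \in \cE$, each approximant $b$ with $E \circ (b \oplus b) \subseteq c \oplus c$ is \emph{well inside}~$c$, meaning that there exists a separator $b' \in L$ with $b \wedge b' = 0$ and $c \vee b' = 1_L$; moreover $c = \bigvee\{b \mid b \luni c\}$. It therefore suffices to show $b \le a$ for every such~$b$. Applying $h$ and using $h(c) = h(a)$ converts the separation $c \vee b' = 1_L$ into the cover $h(a) \vee h(b') = 1_M$.

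The heart of the argument, and the step where I expect the main obstacle, is to upgrade the cover $h(a) \vee h(b') = 1_M$ in $M$ to the genuine equality $a \vee b' = 1_L$ in $L$: for then $b \wedge b' = 0$ immediately forces $b = b \wedge (a \vee b') = b \wedge a \le a$, and taking joins yields $c \le a$. Density of $h$ alone only gives $(a \vee b')^{**} = 1_L$, which is too weak; what must be exploited is the interaction between compactness of~$M$ (providing a \emph{finite} subcover of $1_M$ by elements well inside $h(a) \vee h(b')$ with respect to a star-refined symmetric entourage of $\cF$) and the star-refinement axiom in~$\cE$ (allowing one to lift the finite configuration back to~$L$ in a manner compatible with density and with surjectivity). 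An alternative, higher-level strategy would be to invoke the general fact from~\cite{picadopultr2011frames} that every compact uniform frame is complete, which is precisely the content of the proposition; the direct approach above has the advantage of making the roles of density, compactness, and the uniform structure transparent.
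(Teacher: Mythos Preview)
The paper does not supply a proof of this proposition; it is cited from~\cite{picadopultr2011frames} without argument. Your ``higher-level alternative'' --- simply invoking that compact uniform frames are complete --- is therefore exactly what the paper does, and you rightly note that this is the very content of the proposition.

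Regarding your direct argument: the reduction is correct and standard. Passing to the right adjoint~$h_*$, setting $c := h_*h(a)$, using admissibility to write $c = \bigvee\{b \mid b \luni c\}$, and extracting for each such~$b$ a separator~$b'$ with $b \wedge b' = 0$ and $c \vee b' = 1_L$ all work as you describe, and the problem does reduce to lifting $h(a) \vee h(b') = 1_M$ to $a \vee b' = 1_L$.

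The gap is that your sketch of this lift is not a proof. The phrase ``a finite subcover of $1_M$ by elements well inside $h(a) \vee h(b')$ with respect to a star-refined symmetric entourage of~$\cF$'' carries no information as stated: since $h(a) \vee h(b') = 1_M$, every element of~$M$ is trivially well inside it, and no refinement of that two-element cover will by itself produce an equality in~$L$. The standard argument applies compactness instead to the $h$-image of a uniform cover of~$L$ witnessing $b \luni c$ (after one or more star-refinements), and then combines the resulting finite subfamily with density and the interpolation property of~$\luni$ to manufacture the needed cover of~$1_L$. You name the correct ingredients but do not assemble them, and that assembly is the entire content of the proof; as written, the proposal stops precisely at the decisive step.
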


Completions of quasi-uniform frames may also be characterized in terms
of the so-called \emph{Cauchy maps}. These can be thought of as
analogues of Cauchy filters for quasi-uniform spaces.
If~$L$ and~$M$ are semilattices with top and bottom elements, then a
map $\phi: L \to M$ is called a \emph{bounded meet homomorphism}
provided it preserves the bottom element and all finite meets
(including the empty one).
\begin{definition}\label{sec:cm}
  Let $(L, \cE)$ be a quasi-uniform frame and $M$ be any frame. A
  \emph{Cauchy map} $\phi: (L, \cE) \to M$ is a function $\phi: L \to M$ such that
  \begin{enumerate}
  \item\label{item:20} $\phi$ is a bounded meet homomorphism,
  \item\label{item:1} for every $a \in L$, $\phi(a) \leq \bigvee
    \{\phi(b) \mid b \luni_1 a\text{ or } b \luni_2 a\}$,
  \item\label{item:2} for every $E \in \cE$, $1 = \bigvee \{\phi(a)
    \mid (a,a) \in E\}$.
  \end{enumerate}
\end{definition}

\begin{theorem}[{\cite[Theorem 6.5]{PicadoPultr2018}}]\label{t:6}
  Let $(L, \cE)$ be a quasi-uniform frame. Then $(L, \cE)$ is complete
  if and only if each Cauchy map $(L, \cE) \to M$ is a frame
  homomorphism.
\end{theorem}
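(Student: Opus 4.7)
My plan is to exploit the informal correspondence \emph{dense extremal epimorphisms onto $(L, \cE)$} $\leftrightarrow$ \emph{Cauchy maps out of $(L, \cE)$}. More precisely, for any dense extremal epimorphism $h : (L', \cE') \twoheadrightarrow (L, \cE)$ the right adjoint $h_* : L \to L'$ will turn out to be a canonical Cauchy map, and conversely every Cauchy map $\phi : (L, \cE) \to M$ should arise, after a suitable construction, as (a factor of) a dense extremal epimorphism onto $(L, \cE)$.

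For the implication that Cauchy maps being frame homomorphisms implies completeness, let $h : (L', \cE') \twoheadrightarrow (L, \cE)$ be a dense extremal epimorphism and consider its right adjoint $h_* : L \to L'$. I first verify that $h_*$ satisfies Definition~\ref{sec:cm}. Condition~(a) is automatic: right adjoints preserve all meets, and density of $h$ gives $h_*(0) = 0$. For condition~(c), I would use that $\cE$ is generated by $(h \oplus h)[\cE']$ (extremality of $h$), so it suffices to treat entourages of the form $(h \oplus h)(E')$; then the cover $1 = \bigvee\{a' \mid (a', a') \in E'\}$ in $L'$ descends, via the identity $h h_* = \id_L$, to the required cover of $1 \in L'$ by elements of the form $h_*(a)$. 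Condition~(b) is handled similarly, by showing that $b \luni_i^{\cE} a$ implies $h_*(b) \luni_i^{\cE'} h_*(a)$, which follows from the extremality of $h$ in $\qunifrm$. The hypothesis then forces $h_*$ to be a frame homomorphism, giving a frame-theoretic section of $h$; combining this with density and with the fact that $h$ is extremal in $\qunifrm$ will force $h$ to be an isomorphism.

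For the converse, suppose $(L, \cE)$ is complete and let $\phi : (L, \cE) \to M$ be a Cauchy map. I would construct a quasi-uniform frame $(N, \cF)$ and a dense extremal epimorphism $q : (N, \cF) \twoheadrightarrow (L, \cE)$ so that $\phi$ factors through $q$ as a frame homomorphism. A natural candidate is to take $N$ to be the subframe of $L \oplus M$ generated by $\{a \oplus \phi(a) \mid a \in L\}$, with $\cF$ the coarsest quasi-uniformity making the first projection $q : N \to L$ a quasi-uniform morphism, and with $\psi : N \to M$ induced by the second coproduct injection. Conditions~(a) and~(c) on $\phi$ should translate into $q$ being well-defined and onto, while condition~(b) should yield density of $q$. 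Completeness of $(L, \cE)$ then forces $q$ to be an isomorphism, whence $\phi = \psi \circ q^{-1}$ is a frame homomorphism. The main obstacle will be this construction itself: one must verify that $(N, \cF)$ satisfies axiom~\ref{item:QU3} and that $q$ is really a dense extremal epimorphism in $\qunifrm$. A further subtle point, in the other direction, is the transfer of the relations $\luni_i$ through the right adjoint $h_*$ required for condition~(b).
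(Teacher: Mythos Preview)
The paper does not supply its own proof of this theorem: it is quoted from \cite{PicadoPultr2018} and used as a black box. So there is no in-paper argument to compare against. The closest thing the paper does is prove the Frith-frame analogue, Theorem~\ref{t:7}, and there the strategy is quite different from yours: rather than working directly with an arbitrary dense extremal epimorphism or an arbitrary Cauchy map, the paper exhibits a single \emph{universal} Cauchy map $\lambda : (L,S) \to \cC_S\idl(S)$ into the completion (Theorem~\ref{t:2}), and both directions of Theorem~\ref{t:7} are then reduced to properties of that one map. The same philosophy underlies the proof in \cite{PicadoPultr2018}: one uses the existence of a completion $\gamma : C(L,\cE) \twoheadrightarrow (L,\cE)$ together with its universal property for Cauchy maps, rather than building an ad hoc $(N,\cF)$ from a given $\phi$.

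Concerning your sketch itself, the first direction has a real gap at condition~(b). Your proposed implication ``$b \luni_i^{\cE} a \Rightarrow h_*(b) \luni_i^{\cE'} h_*(a)$'' is neither obviously true nor sufficient: even granting it, you would still need that the elements $c' \in L'$ with $c' \luni_i^{\cE'} h_*(a)$ cover $h_*(a)$ \emph{and} that each such $c'$ lies below some $h_*(b)$ with $b \luni_i^{\cE} a$, which does not follow. The standard route instead exploits that the join $a = \bigvee\{b \mid b \luni_i a\}$ is \emph{directed}, together with an interpolation-type argument using \ref{item:QU2}; your sketch does not touch this. For the second direction, your construction of $(N,\cF)$ inside $L \oplus M$ is unconventional and, as you note, it is unclear that $(N,\cF)$ satisfies \ref{item:QU3} or that $q$ is dense; the completion-based approach avoids these difficulties entirely.
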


\section{Pervin spaces}\label{sec:pervin}

In what follows, a \emph{Pervin space} will be a pair $(X,\ca{S})$
such that $X$ is a set and $\ca{S}\se \cP(X)$ is a bounded
sublattice. A morphism
of Pervin spaces $f: (X, \cS) \to (Y, \cT)$ is a function $f: X \to Y$
such that whenever $T\in \cT$ we also have $f^{-1}(T)\in \cS$. The
category of Pervin spaces and corresponding morphisms will be denoted
by {\perv}. As mentioned in the introduction, each Pervin space
uniquely determines a \emph{transitive and totally bounded
  quasi-uniform space}, and every such space arises from a Pervin
space. Actually, one can show that there is an equivalence between the
categories of Pervin spaces and of transitive and totally bounded
quasi-uniform spaces. However, since quasi-uniform spaces are not the
subject of this paper, but rather its pointfree counterpart, we will
not provide further details on this matter. We refer the reader
to~\cite{PERVIN1962,FletcherLindgren1982, pin17} for further reading.

We start by noticing that \Top{} may be seen as a full subcategory of
\perv{}, with a topological space $(X, \tau)$ being identified with
the Pervin space~$(X, \tau)$. Conversely, if $(X, \cS)$ is a Pervin
space, then we may consider on $X$ the topology $\Omega_\cS(X)$
generated by~$\cS$.\footnote{The reader interested in the connections
  between Pervin and quasi-uniform spaces may notice that, if $(X,
  \cE_\cS)$ is the quasi-uniform space defined by $(X, \cS)$, then
  $\Omega_\cS(X)$ is precisely the topology induced by $\cE_\cS$.}  It
is easily seen that this assignment is part of a forgetful functor $U:
\perv \to \Top$ which is right adjoint to the embedding $\Top
\hookrightarrow \perv$. Thus, \Top{} a full coreflective subcategory
of \perv{}.

Next, we will characterize the \emph{extremal monomorphisms} of Pervin
spaces. For that, we first need to describe the epimorphisms.

\begin{lemma}\label{episurj} Let $e: (X, \cS) \to (Y, \cT)$ be a
  morphism of Pervin spaces. Then, $e$ is an epimorphism if and only
  if the set map $e: X \to Y$ is surjective.
\end{lemma}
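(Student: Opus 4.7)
The plan is to handle the two implications separately, with the easy implication being a direct computation and the harder one proceeding by contrapositive via an explicit construction of a separating pair of morphisms.

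First I would dispose of the ``if'' direction. Assuming $e: X \to Y$ is surjective as a function, I take any two Pervin morphisms $f, g: (Y, \cT) \to (Z, \cU)$ with $f \circ e = g \circ e$, and for an arbitrary $y \in Y$ pick $x \in X$ with $e(x) = y$, so that $f(y) = f(e(x)) = g(e(x)) = g(y)$. This yields $f = g$, so $e$ is an epimorphism.

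For the ``only if'' direction I argue by contrapositive. Suppose $e$ is not surjective, and pick $y_0 \in Y \setminus e(X)$. The idea is to build a codomain Pervin space into which $Y$ embeds in two distinct ways that disagree only at $y_0$; then the post-composition with $e$ of the two embeddings will be equal, showing $e$ is not epic. Concretely, I would take $Z := Y \sqcup \{*\}$ equipped with the trivial bounded sublattice $\cU := \{\emptyset, Z\}$, let $f: Y \to Z$ be the inclusion, and define $g: Y \to Z$ by $g(y) = y$ for $y \neq y_0$ and $g(y_0) = *$. Since $\cU$ is trivial, both $f$ and $g$ are automatically Pervin morphisms (their only non-trivial preimages are $Y \in \cT$). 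Moreover $f(y_0) = y_0 \neq * = g(y_0)$, so $f \neq g$, while on the image $e(X)$ (which avoids $y_0$) the two maps agree, so $f \circ e = g \circ e$.

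The proof is essentially routine; the only subtle point is choosing the codomain Pervin structure so that both maps are guaranteed to be Pervin morphisms, and using the trivial sublattice $\{\emptyset, Z\}$ removes all constraints and makes this automatic. No obstacle of substance arises.
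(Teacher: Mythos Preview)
Your proof is correct and follows essentially the same approach as the paper: both arguments construct a separating pair of morphisms into a Pervin space carrying the trivial sublattice $\{\emptyset, Z\}$, so that any function is automatically a Pervin morphism. The only cosmetic difference is that the paper uses the fixed two-point space $(\{0,1\},\{\emptyset,\{0,1\}\})$ (sending $e[X]$ to $1$ and its complement to $0$, versus the constant map to $1$) rather than adjoining a fresh point to $Y$.
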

\begin{proof}
  The argument to show that every surjection is an epimorphism is
  analogous to the set-theoretical one. For the converse, let
  $e:(X,\ca{S})\ra (Y,\ca{T})$ be an epimorphism. We consider the
  two-point Pervin space $(Z, \cU) := (\{0,1\},\{\emptyset,
  \{0,1\}\})$, and we let $f_1, f_2: Y \to Z$ be defined by $f_1(y)=1$
  if and only if $y\in e[X]$, and by $f_2(y)=1$ for all $y\in
  Y$. Then, $f_1$ and $f_2$ induce morphisms of Pervin spaces
  $(Y,\ca{T}) \to (Z, \cU)$ satisfying $f_1\circ e = f_2\circ
  e$. Since $e$ is an epimorphism, we must have $f_1 = f_2$. But this
  implies $e[X] = Y$, that is, $e$ is a surjection.
\end{proof}

\begin{proposition}\label{p:20}
  A map $m:(X,\ca{S})\ra (Y,\ca{T})$ of Pervin spaces is an extremal
  monomorphism if and only if the set map $m: X \to Y$ is injective
  and every $S \in \cS$ is of the form $m^{-1}(T)$ for some $T \in
  \cT$.
\end{proposition}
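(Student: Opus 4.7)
The plan is to first establish the dual of Lemma~\ref{episurj}, namely that a morphism of Pervin spaces is a monomorphism if and only if its underlying set-map is injective. This follows by an analogous argument, testing against two parallel maps from the two-point Pervin space $(\{0,1\},\cP(\{0,1\}))$, so injectivity of $m$ is automatic in either direction.

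For the ($\Rightarrow$) direction, assume $m$ is an extremal monomorphism. The key move is to introduce the bounded sublattice $\cS':=\{m^{-1}(T)\mid T\in\cT\}$ of $\cP(X)$, which is closed under finite unions, finite intersections, and contains $\emptyset$ and $X$. Since $m$ is a morphism, $\cS'\subseteq \cS$, and $m$ factors as
\[
(X,\cS)\xrightarrow{\,\id_X\,}(X,\cS')\xrightarrow{\,m\,}(Y,\cT),
\]
where both maps are morphisms by construction of $\cS'$. The first factor is a set-theoretic bijection and therefore an epimorphism by Lemma~\ref{episurj}. Extremality of $m$ then forces $\id_X:(X,\cS)\to(X,\cS')$ to be an isomorphism, meaning that its inverse $\id_X:(X,\cS')\to(X,\cS)$ is also a morphism, i.e.\ $\cS\subseteq \cS'$. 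Hence $\cS=\cS'$, as required.

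For the ($\Leftarrow$) direction, suppose $m$ is injective and $\cS=\{m^{-1}(T)\mid T\in\cT\}$. Injectivity makes $m$ a monomorphism. Given a factorization $m=g\circ e$ with $e:(X,\cS)\to(Z,\cU)$ an epimorphism and $g$ a morphism, Lemma~\ref{episurj} gives that $e$ is surjective, and injectivity of $m$ forces $e$ to be injective as well, hence a bijection. To conclude $e$ is an isomorphism, I would verify that $e(S)\in\cU$ for each $S\in\cS$: writing $S=m^{-1}(T)=e^{-1}(g^{-1}(T))$ for some $T\in\cT$ and using that $e$ is a bijection, I get $e(S)=g^{-1}(T)$, which lies in $\cU$ because $g$ is a morphism.

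No step looks genuinely hard; the main obstacle is bookkeeping — correctly unpacking the definition of isomorphism in $\perv$ (a bijection whose set-theoretic inverse is again a morphism) and spotting the intermediate Pervin space $(X,\cS')$ that witnesses extremality. Once this auxiliary object is in place the rest is straightforward diagram chasing.
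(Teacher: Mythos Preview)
Your proof is correct. The ($\Leftarrow$) direction is essentially identical to the paper's: both show that the set-theoretic inverse of $e$ is a Pervin morphism by writing $S=m^{-1}(T)=e^{-1}(g^{-1}(T))$ and concluding $e(S)=g^{-1}(T)\in\cU$.

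The ($\Rightarrow$) direction takes a slightly different route. The paper factors $m$ through its \emph{image}: it sets $Z:=m[X]$ with $\cU:=\{T\cap m[X]\mid T\in\cT\}$, obtains $m=g\circ e$ with $e:(X,\cS)\twoheadrightarrow(Z,\cU)$, and then reads off both injectivity of $m$ and the equality $\cS=m^{-1}(\cT)$ from the fact that $e$ must be an isomorphism. You instead keep the underlying set $X$ fixed and only change the lattice, factoring through $(X,\cS')$ with $\cS'=m^{-1}(\cT)$; injectivity is handled separately by the lemma that monomorphisms of Pervin spaces are exactly the injective maps. Your decomposition is marginally cleaner in that the intermediate object is simpler and the two conclusions (injectivity, $\cS=\cS'$) are decoupled; the paper's image factorization, on the other hand, gets both at once without needing the auxiliary characterization of monomorphisms. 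Both are short and routine.
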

\begin{proof}
  Given an extremal monomorphism $m:(X,\ca{S})\ra (Y,\ca{T})$, we let
  $Z := m[X]$, and $\cU$ be the bounded sublattice of $\cP(Z)$
  consisting of the subsets of the form $T \cap m[X]$ for some $T \in
  \cT$, so that we have a Pervin space $(Z, \cU)$. Then, the direct
  image factorization of $m: X \to Y$ induces morphisms of Pervin
  spaces $e: (X, \cS) \twoheadrightarrow (Z, \cU)$ and $g: (Z, \cU)
  \hookrightarrow (Y, \cT)$ satisfying $m = g\circ e$. By
  Lemma~\ref{episurj}, $e$ is an epimorphism and thus, an
  isomorphism. In particular, the underlying set-theoretical map of
  $e$ is a bijection, and since $m = g\circ e$, it follows that $m$ is
  injective. It remains to show that $m^{-1}(\cT) = \cS$. For that, we
  let $n$ be the inverse of $e$ and we pick $S \in \cS$. Since $n$ is
  a morphism of Pervin spaces, we have that $n^{-1}[S] = e[S] = m[S]$
  belongs to $\cU$. Therefore, there exists $T \in \cT$ such that
  $m[S] = T \cap m[X]$. But since $m$ is injective, this implies $S =
  m^{-1}(T)$ as required.

  Conversely, suppose that $m:(X,\ca{S})\hookrightarrow (Y,\ca{T})$ is
  injective and satisfies $\cS = m^{-1}(\cT)$. Consider a
  factorization $m=g\circ e$, where $e:(Z,\ca{U})\to (Y,\ca{T})$ is an
  epimorphism. By Lemma~\ref{episurj}, $e$ is a surjection, and it is
  also injective by injectivity of $m$. We let $n$ be the
  set-theoretical inverse of $e$ and we show that $n$ is a morphism of
  Pervin spaces. Given $S \in \cS$, there is, by hypothesis, an
  element $T \in \cT$ such that $S = m^{-1}(T)$. Then, since $g$ is a
  morphism of Pervin spaces, we have that
  \[n^{-1}(S) = (mn)^{-1}(T) = g^{-1}(T)\]
  belongs to $\cU$. Thus, $n$ is an isomorphism in $\perv$.
\end{proof}

We will say that $(Y, \cT)$ is a \emph{subspace} of $(X, \cS)$ if
there exists an extremal monomorphism $(Y, \cT) \hookrightarrow (X,
\cS)$.
We remark that subspaces of $(X, \cS)$ are, up to isomorphism, in a
bijection with the subsets of~$X$. We also note that every extremal
monomorphism $m:(X,\ca{S})\hookrightarrow (Y,\ca{T})$ is regular. To
see this, we consider the two-point Pervin space $(Z, \cU) :=
(\{0,1\}, \{\emptyset, \{0, 1\}\})$. Then, $m$ is the equalizer of the
morphisms of Pervin spaces $f_1, f_2: (Y, \cT) \to (Z, \cU)$ defined
by $f_1(y) = 1$ for every $y \in Y$ and by $f_2(y) = 1$ if and only if
$y \in m[X]$.

Since every epimorphism which is also an extremal monomorphisms is an
isomorphism (cf.~\cite[Proposition~4.3.7]{Borceux94-vol1}), we obtain
the following:

\begin{corollary}\label{perviniso}
  Isomorphisms in $\bd{Pervin}$ are the bijections $f:(X,\ca{S})\ra
  (Y,\ca{T})$ such that $f[S]$ belongs to $\cT$, for every $S \in
  \cS$.
\end{corollary}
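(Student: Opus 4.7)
The plan is to derive this as a direct consequence of the already-established characterizations of extremal monomorphisms (Proposition~\ref{p:20}) and epimorphisms (Lemma~\ref{episurj}) in $\perv$, together with the general categorical fact (cited just before the corollary from \cite{Borceux94-vol1}) that a morphism which is simultaneously an extremal monomorphism and an epimorphism must be an isomorphism. So the real work is translating the two characterizations into the concrete condition stated, and verifying that this condition is equivalent to being both an extremal mono and an epi.

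For the forward direction, I would observe that any isomorphism $f:(X,\ca{S})\to (Y,\ca{T})$ is in particular both an extremal monomorphism and an epimorphism. By Lemma~\ref{episurj}, the underlying set map is surjective, and by Proposition~\ref{p:20}, it is injective and satisfies $\ca{S}\subseteq f^{-1}(\ca{T})$ (combined with the other inclusion coming from $f$ being a morphism, this yields $\ca{S}=f^{-1}(\ca{T})$). Since $f$ is a bijection, for any $S\in\ca{S}$ we may write $S=f^{-1}(T)$ for some $T\in\ca{T}$, whence $f[S]=T\in\ca{T}$.

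For the converse, suppose $f:(X,\ca{S})\to(Y,\ca{T})$ is a bijective morphism of Pervin spaces satisfying $f[S]\in\ca{T}$ for every $S\in\ca{S}$. Being bijective, $f$ is surjective and hence an epimorphism by Lemma~\ref{episurj}. For injectivity plus the representability condition of Proposition~\ref{p:20}, I use that for each $S\in\ca{S}$ we have $S=f^{-1}(f[S])$ with $f[S]\in\ca{T}$, so $S$ is of the required form; thus $f$ is an extremal monomorphism. Invoking the categorical fact that an epimorphism which is also an extremal monomorphism is an isomorphism, we conclude that $f$ is an isomorphism in $\perv$. I do not foresee any genuine obstacle here; the corollary is essentially a bookkeeping statement, and the only minor care needed is to check that the stated condition ``$f[S]\in\ca{T}$ for every $S\in\ca{S}$'' matches precisely with the image-preimage reformulation used in Proposition~\ref{p:20}.
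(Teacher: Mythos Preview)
Your proof is correct and follows essentially the same route as the paper's: both arguments combine Lemma~\ref{episurj} and Proposition~\ref{p:20} with the categorical fact that an extremal monomorphism which is also an epimorphism must be an isomorphism, and then translate the preimage condition $\cS = f^{-1}(\cT)$ into the direct-image condition $f[\cS]\subseteq\cT$ using bijectivity of~$f$. The paper presents this translation slightly more tersely (as the equivalence $f[\cS]\subseteq\cT \iff \cS\subseteq f^{-1}[\cT]$ for bijective~$f$), but the content is identical.
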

\begin{proof}
  By Lemma~\ref{episurj} and Proposition~\ref{p:20}, we know that a
  morphism of Pervin spaces $f:(X,\ca{S})\ra (Y,\ca{T})$ is an
  isomorphism if and only if it is bijective and satisfies $\cS =
  f^{-1}(\cT)$. Thus, the claim follows from having that, if $f$ is a
  bijection, then $f[\cS] \subseteq \cT \iff \cS \subseteq
  f^{-1}[\cT]$.
\end{proof}

We finally consider the full subcategory $\sperv$ of \perv{} whose
objects are those Pervin spaces $(X, \cB)$ such that $\cB$ a Boolean
algebra. In the setting of Pervin quasi-uniform spaces, this is a
relevant subcategory because it corresponds to the uniform spaces. In
our paper, it will play a role when the pointfree version of this fact
is discussed (cf. Section~\ref{sec:4}, namely Theorem~\ref{t:5}).

An important property of $\sperv{}$ is that of being coreflective in
\perv{} as we will see now (see Proposition~\ref{p:8} for the
pointfree version of this result). We define a functor $\psym:\perv\ra
\sperv$ as follows. For an object $(X,\cS)$, we let $\psym(X,\cS)$ be
the Pervin space $(X,\overline{\cS})$, where $\overline{\ca{S}}$ is
the Boolean subalgebra of the powerset $\ca{P}(X)$ generated by the
elements of~$\ca{S}$.  On morphisms, we simply map a function to
itself. Notice that, since a map of Pervin spaces $f:(X,\ca{S})\ra
(Y,\ca{T})$ is such that the preimage of a complement of an element in
$\ca{T}$ is the complement of an element in $\ca{S}$, this assignment
is well-defined on morphisms. Clearly, $\psym$ is a functor. We show
now that $\psym$ is right adjoint to the embedding $\sperv
\hookrightarrow \perv$.

\begin{proposition}\label{p:27}
  The category of symmetric Pervin spaces is a full coreflective
  subcategory of that of Pervin spaces. More precisely, the functor
  $\psym$ is right adjoint to the embedding $\sperv \hookrightarrow
  \perv$.
\end{proposition}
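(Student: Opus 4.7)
The plan is to exhibit an explicit counit for the claimed adjunction and check its universal property. For each Pervin space $(X,\ca{S})$, define $\epsilon_{(X,\ca{S})}: \psym(X,\ca{S}) = (X,\overline{\ca{S}}) \to (X,\ca{S})$ to be the identity map on~$X$. This is a well-defined morphism in $\perv$ because $\ca{S} \subseteq \overline{\ca{S}}$, so preimages (which are identities here) of elements of $\ca{S}$ lie in $\overline{\ca{S}}$. Note also that the embedding $\sperv \hookrightarrow \perv$ is trivially fully faithful since $\sperv$ is defined as a full subcategory.

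To verify the universal property, I would fix a symmetric Pervin space $(Y,\ca{B})$ and a morphism $f:(Y,\ca{B}) \to (X,\ca{S})$ in $\perv$, and show there is a unique morphism $\widetilde{f}: (Y,\ca{B}) \to (X,\overline{\ca{S}})$ in $\sperv$ with $\epsilon_{(X,\ca{S})}\circ \widetilde{f} = f$. Uniqueness is forced: since $\epsilon_{(X,\ca{S})}$ is the identity on underlying sets, $\widetilde{f}$ must equal~$f$ as a function $Y \to X$. For existence, the only nontrivial point is that this set map is a morphism of Pervin spaces from $(Y,\ca{B})$ to $(X,\overline{\ca{S}})$, i.e., that $f^{-1}(T) \in \ca{B}$ for every $T \in \overline{\ca{S}}$.

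For this last step, the key observation is that $f^{-1}(-): \ca{P}(X) \to \ca{P}(Y)$ is a Boolean algebra homomorphism (it preserves arbitrary unions, intersections, and complements). By hypothesis on $f$, it sends $\ca{S}$ into $\ca{B}$; since $\ca{B}$ is itself a Boolean subalgebra of $\ca{P}(Y)$, it follows that $f^{-1}$ sends the Boolean subalgebra $\overline{\ca{S}}$ of $\ca{P}(X)$ generated by $\ca{S}$ into $\ca{B}$, as required. This is the main (and essentially only) content of the argument; no real obstacle arises because preimages already preserve complements for free, so Booleanness of the target is precisely what is needed to absorb the closure under complements involved in passing from $\ca{S}$ to $\overline{\ca{S}}$.

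Combining these facts yields the counit description of the adjunction $\sperv \hookrightarrow \perv \dashv \psym$, showing that $\psym$ is right adjoint to the inclusion and hence that $\sperv$ is a full coreflective subcategory of $\perv$.
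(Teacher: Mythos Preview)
Your proof is correct and follows essentially the same approach as the paper: both take the identity map on underlying sets as the counit and verify its universal property by observing that, since preimage under $f$ is a Boolean algebra homomorphism $\ca{P}(X)\to\ca{P}(Y)$ sending $\ca{S}$ into the Boolean subalgebra $\ca{B}$, it must send the Boolean closure $\overline{\ca{S}}$ into $\ca{B}$ as well. The paper phrases this last step slightly more concretely (writing $f^{-1}(S^c)=(f^{-1}(S))^c\in\ca{B}$), but the content is identical.
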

\begin{proof}
  We first notice that, for every Pervin space $(X, \cS)$, the
  identity function on~$X$ induces a morphism of Pervin spaces
  $\id_X:(X, \overline{\cS})\ra (X,\ca{S})$. Thus, we only need to
  show that, for every morphism of Pervin spaces $f:(Y,\ca{B})\ra
  (X,\ca{S})$, with $\cB$ a Boolean algebra, there is a unique Pervin
  map $\widetilde{f}:(Y,\cB)\ra (X,\overline{\cS})$ satisfying $f =
  \id_X \circ \widetilde f$.  Of course, this holds if and only if $f$
  also induces a morphism of Pervin spaces $f:(Y,\ca{B})\ra
  (X,\overline{\ca{S}})$.  This is indeed the case because
  $\overline{\cS}$ is generated, as a lattice, by $\cS$ together with
  the complements in $\cP(X)$ of the elements of $\cS$ and, since
  $\cB$ is a Boolean subalgebra of $\cP(Y)$, it follows that
  $f^{-1}(S^c) = (f^{-1}(S))^c$ belongs to $\cB$ for every $S \in
  \cS$.
\end{proof}

Recall that the \emph{Skula topology} on a given topological space
$(X, \tau)$ is the topology generated by $\tau$ together with the
complements of its elements. Therefore, if $(X, \cS)$ is a Pervin
space, then the topology $\Omega_{\overline{\cS}}(X)$ on $X$ defined
by its symmetrization is precisely the Skula topology on the
topological space $(X, \Omega_\cS(X))$ defined by $(X, \cS)$.  For
that reason, we will say that $\Omega_{\overline{\cS}}(X)$ is the
\emph{Skula topology} on $X$ induced by $(X, \cS)$.
\section{Frith frames as a pointfree version of Pervin
  spaces}\label{sec:1}

\subsection{The category of Frith frames and the $\perv$ - $\ffrm$
  dual adjunction}\label{FrFrm}

A \emph{Frith frame} is a pair $(L,S)$ where $L$ is a frame and $S\se
L$ is a bounded sublattice such
that all elements in $L$ are joins of elements in $S$. A morphism
$h:(L,S)\ra (M,T)$ of Frith frames is a frame homomorphism $h:L\ra M$
such that whenever $s\in S$ we have $h(s)\in T$. We denote the
category of Frith frames and corresponding morphisms by~\ffrm{}. By
identifying a frame~$L$ with the Frith frame~$(L, L)$, we may see
\Frm{} as a full reflective subcategory of \ffrm{}. Indeed, we have
the following:

\begin{proposition}\label{p:14}
  The category of frames is a full reflective subcategory of that of
  Frith frames. More precisely, the forgetful functor $\ffrm \to \Frm$
  is left adjoint to the embedding $\Frm \hookrightarrow \ffrm$.
\end{proposition}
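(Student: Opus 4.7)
The plan is to argue that the adjunction is essentially a triviality: for a Frith frame of the form $E(M) = (M, M)$, the sublattice condition on a $\ffrm$-morphism becomes vacuous, so morphisms into $E(M)$ are exactly frame homomorphisms out of the underlying frame.

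More precisely, I would first fix the functors: let $U:\ffrm \to \Frm$ be the forgetful functor $U(L,S) := L$ and $U(h) := h$, and let $E:\Frm \hookrightarrow \ffrm$ be the embedding $E(M) := (M,M)$ (which is indeed a Frith frame since $M$ is trivially join-dense in itself) and $E(h) := h$. Both are visibly functors. I would then exhibit a natural bijection
\[
\mathrm{Hom}_{\Frm}(U(L,S), M) \;\cong\; \mathrm{Hom}_{\ffrm}((L,S), E(M))
\]
by the identity map on underlying frame homomorphisms. In one direction, any frame homomorphism $h:L \to M$ satisfies $h(S) \subseteq M$ trivially, so it qualifies as a $\ffrm$-morphism $(L,S) \to (M,M)$. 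In the other, any $\ffrm$-morphism $(L,S) \to (M,M)$ is by definition a frame homomorphism $L \to M$. Naturality in $(L,S)$ and in $M$ is immediate, since composition in $\ffrm$ and $\Frm$ is composition of underlying frame homomorphisms.

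From this bijection I would read off the unit and counit: the unit $\eta_{(L,S)}: (L,S) \to EU(L,S) = (L,L)$ is the identity frame homomorphism on $L$ (well-defined as a $\ffrm$-morphism because $S \subseteq L$), and the counit $\epsilon_M: UE(M) = M \to M$ is the identity. The triangle identities are trivial. This establishes $U \dashv E$. Finally, the embedding $E$ is fully faithful: faithfulness is obvious, and fullness follows from the same observation, namely that a $\ffrm$-morphism $(L,L) \to (M,M)$ is a frame homomorphism $h:L \to M$ satisfying $h(L) \subseteq M$, a vacuous requirement. Hence $\Frm$ sits inside $\ffrm$ as a full reflective subcategory with reflector $U$.

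There is no genuine obstacle here; the only thing worth emphasizing is the specific shape $E(M) = (M,M)$ which makes the join-dense sublattice condition on morphisms evaporate. This should be dispatched in a short, direct paragraph.
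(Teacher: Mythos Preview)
Your proposal is correct and follows essentially the same approach as the paper: both define the forgetful functor $U(L,S)=L$ and the embedding $M\mapsto(M,M)$, then observe that a frame homomorphism $L\to M$ is automatically a Frith morphism $(L,S)\to(M,M)$, yielding the natural hom-set bijection. You spell out the unit, counit, and fullness more explicitly than the paper does, but the argument is the same.
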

\begin{proof}
  Let $U: \ffrm \to \Frm$ denote the forgetful functor and $F: \Frm
  \hookrightarrow \ffrm$ the embedding identifying a frame $L$ with
  the Frith frame $(L, L)$.  We only need to observe that for every
  Frith frame $(L, S)$ and every frame~$M$, every frame homomorphism
  $L \to M $ induces a morphism of Frith frames $(L, S) \to (M, M)$,
  and thus, we have a natural isomorphism $\Frm(U-, -) \cong \ffrm(-,
  F-)$.
\end{proof}

Next, we will see that Frith frames may indeed be considered the
pointfree analogues of Pervin spaces, by showing that the classical
adjunction $\Omega: \Top \adj \Frm^{\rm op}: \pt$ extends to an adjunction
$\Omega: \perv \adj \ffrm^{\rm op}: \pt$ between the categories of Pervin
frames and of Frith frames, so that the following diagram commutes:
\begin{center}
  \begin{tikzpicture} \node (A) {$\Top$}; \node[right of = A, xshift =
    20mm] (B) {$\perv$}; \node[below of = A] (C) {$\Frm^{\rm op}$};
    \node[below of = B] (D) {$\ffrm^{\rm op}$};
    \draw[right hook->] (A) to (B); \draw ([xshift=-2mm]B.270) to
    node[ArrowNode,left] {$\Omega$} ([xshift=-2mm]D.90); \draw
    ([xshift=-2mm]A.270) to node[ArrowNode,left] {$\Omega$}
    ([xshift=-2mm]C.90); \draw[right hook->] (C) to (D); \draw (D) to
    node[ArrowNode,right] {$\pt$} (B); \draw (C) to
    node[ArrowNode,right] {$\pt$} (A);
  \end{tikzpicture}
\end{center}
Let us define the open set functor~$\Omega: \perv \to \ffrm$. Given a
Pervin space $(X,\mathcal{S})$ we set $\Om(X,\ca{S}) :=
(\Om_\cS(X),\ca{S})$, where $\Om_\cS(X)$ denotes the topology on~$X$
generated by~$\cS$ (recall Section~\ref{sec:pervin}). If $f: (X, \cS)
\to (Y, \cT)$ is a morphism of Pervin spaces then taking preimages
under~$f$ defines a morphism of Frith frames $\Omega(f):= f^{-1}:
(\Omega_\cT(Y), \cT) \to (\Omega_\cS(X), \cS)$. It is easily seen that
this assignment yields a functor $\Om:\perv\ra \ffrm^{\rm op}$.
In turn, the spectrum functor~$\pt: \ffrm^{\rm op} \to \perv$ is defined
on objects by $\pt(L, S) := (\pt(L),\widehat S)$ for every Frith frame
$(L, S)$, where $\widehat S:= \{\widehat s \mid s \in S\}$. Finally,
if $h:(L,S)\ra (M,T)$ is a morphism of Frith frames, then $\pt(h) :=
(-\circ h)$ is given by precomposition with~$h$. The following lemma
shows that $\pt(h)$ defines a morphism between the Pervin spaces
$(\pt(M), \widehat T)$ and $(\pt(L), \widehat S)$.

\begin{lemma} Let $h:(L, S) \to (M, T)$ be a morphism of Frith
  frames. Then, for every $s \in S$, the equality
  $(\pt(h))^{-1}(\widehat s) = \widehat {h(s)}$ holds. In particular,
  $\pt(h)$ induces a morphism of Pervin spaces $\pt(h):(\pt(M), \widehat T)\to(\pt(L), \widehat S)$.
\end{lemma}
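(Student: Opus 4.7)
The plan is to verify the equality $(\pt(h))^{-1}(\widehat{s}) = \widehat{h(s)}$ directly by chasing definitions, and then derive the morphism property from it using that $h$ sends $S$ into $T$.

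First I would unfold what it means for a point $p \in \pt(M)$ to lie in $(\pt(h))^{-1}(\widehat{s})$. By definition, $\pt(h)(p) = p \circ h$, so
\[p \in (\pt(h))^{-1}(\widehat{s}) \iff (p \circ h)(s) = 1 \iff p(h(s)) = 1 \iff p \in \widehat{h(s)}.\]
This chain of equivalences holds for every $s \in S$ (indeed for every element of $L$), which establishes the claimed equality.

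For the second part, I need to verify that $\pt(h) \colon \pt(M) \to \pt(L)$ is a morphism of Pervin spaces from $(\pt(M), \widehat{T})$ to $(\pt(L), \widehat{S})$, that is, that the preimage under $\pt(h)$ of every element of $\widehat{S}$ lies in $\widehat{T}$. Given $\widehat{s} \in \widehat{S}$ with $s \in S$, the first part gives $(\pt(h))^{-1}(\widehat{s}) = \widehat{h(s)}$; since $h$ is a morphism of Frith frames, $h(s) \in T$, and hence $\widehat{h(s)} \in \widehat{T}$, as required.

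There is no real obstacle here; the statement is essentially a bookkeeping verification that the spectrum construction behaves well with respect to the designated sublattices. The only point worth being careful about is distinguishing, notationally, the point $\pt(h)(p)$ as a frame homomorphism $L \to \mathbf{2}$ from $p$ itself as a frame homomorphism $M \to \mathbf{2}$, so that the evaluation at $s$ is performed on the correct side.
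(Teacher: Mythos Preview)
Your proof is correct and follows essentially the same approach as the paper: a direct unfolding of the definitions to obtain the chain of equivalences $p \in (\pt(h))^{-1}(\widehat s) \iff \pt(h)(p) \in \widehat s \iff p(h(s)) = 1 \iff p \in \widehat{h(s)}$. The paper leaves the ``in particular'' part implicit, whereas you spell it out explicitly, but otherwise the arguments are identical.
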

\begin{proof} Let $s \in S$ and $p: M \to \two$ be a point. The claim
is a consequence of the following computation:
  \[ p \in (\pt(h))^{-1}(\widehat s) \iff \pt(h)(p) \in \widehat s
\iff p (h (s)) = 1 \iff p \in \widehat{h(s)}. \popQED \] 
\end{proof}

We may now prove that the functors just defined form an adjunction.

\begin{proposition}\label{p:24} There is an adjunction $\Om:\perv\lra \ffrm^{\rm
    op}:\pt$, which extends the classical adjunction $\Om:\Top\lra
  \Frm^{\rm op}:\pt$.
\end{proposition}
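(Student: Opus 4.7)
The plan is to prove the adjunction via a natural hom-set bijection, using the classical $\Omega \dashv \pt$ adjunction as a skeleton. The only content beyond the classical case is showing that the Frith-side and Pervin-side conditions on morphisms correspond to each other under the classical bijection.

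First, for a Pervin space $(X,\cS)$ and a Frith frame $(L,S)$, I would recall that the classical adjunction provides a natural bijection between frame homomorphisms $h \colon L \to \Omega_\cS(X)$ and continuous maps $f \colon X \to \pt(L)$, governed by the rule $f(x)(a) = 1 \iff x \in h(a)$, or equivalently $f^{-1}(\widehat a) = h(a)$. Restricting to the Frith/Pervin setting, I would observe that the Frith condition ``$h(s) \in \cS$ for every $s \in S$'' and the Pervin condition ``$f^{-1}(\widehat s) \in \cS$ for every $s \in S$'' coincide, thanks to the identity $f^{-1}(\widehat s) = h(s)$. This yields the desired hom-set bijection
\[
\ffrm\big((L,S),\,\Omega(X,\cS)\big) \;\cong\; \perv\big((X,\cS),\,\pt(L,S)\big),
\]
and naturality in both variables is inherited from the classical case.

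For the extension claim, I would check that the square of functors commutes on objects; commutativity on morphisms is immediate since both functors act by the same underlying formulas in each case. A topological space $(X,\tau)$, viewed as the Pervin space $(X,\tau)$, is sent by $\Omega$ to $(\Omega_\tau(X),\tau) = (\tau,\tau)$, which is exactly the image of $\tau$ under the embedding $\Frm \hookrightarrow \ffrm$. Dually, a frame $L$, viewed as the Frith frame $(L,L)$, is sent by $\pt$ to $(\pt(L),\widehat L)$; since $\widehat{(-)}$ preserves finite meets and arbitrary joins, $\widehat L$ is already the whole topology on $\pt(L)$, recovering the classical spectrum.

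There is no real obstacle here: the argument is a verification that the Pervin/Frith side conditions track one another through the classical bijection, and the one small point requiring care falls out immediately from $f^{-1}(\widehat s) = h(s)$. Alternatively, one could construct the unit $\eta_{(X,\cS)} \colon x \mapsto (U \mapsto 1 \text{ iff } x \in U)$ and the counit $\epsilon_{(L,S)} \colon a \mapsto \widehat a$, check that they are respectively Pervin and Frith morphisms (for the unit, $\eta_{(X,\cS)}^{-1}(\widehat S) = S \in \cS$; for the counit, $\widehat s \in \widehat S$ by definition), and derive the triangle identities from the classical ones.
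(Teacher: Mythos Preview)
Your proposal is correct. Your primary route via the natural hom-set bijection differs from the paper's proof, which instead verifies directly that the unit and counit of the classical adjunction lift to morphisms of Frith frames and Pervin spaces respectively (essentially your ``alternative'' approach at the end). Both arguments hinge on the same computation $f^{-1}(\widehat s) = h(s)$ (or its specialisation to the unit/counit), so the difference is organisational rather than substantive: your hom-set argument makes naturality automatic by inheritance, while the paper's approach avoids unpacking the classical bijection explicitly and only has to check the two side conditions on the unit and counit.
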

\begin{proof}
Let $(L, S)$ be a Frith frame and $(X, \cS)$ be a Pervin
    space. Let also $\varphi: L \to \Omega \circ \pt(L)$ and $\psi: X
    \to \pt\circ \Omega_\cS(X)$ be, respectively, the components at
    $L$ and at $(X, \Omega_\cS(X))$ of the unit and counit of the
    classical adjunction $\Omega:\Top \adj \Frm^{\rm op} : \pt$. It
    suffices to show that $\varphi$ and $\psi$ induce, respectively,
    morphisms $(L, S) \to (\Omega_{\widehat{S}}(\pt(L)), \widehat S)$
    and $ (X, \cS) \to (\pt(\Omega_\cS(X)), \widehat \cS)$ in the
    suitable categories.

    First notice that, since $S$ join-generates $L$, we have
    $\Omega_{\widehat{S}}(\pt(L)) = \Omega(\pt(S))$. Thus, it follows
    from its definition that $\varphi$ induces a morphism $(L, S) \to
    (\Omega_{\widehat{S}}(\pt(L)), \widehat S)$ of Frith frames. To
    see that $\psi$ induces a morphism $(X, \cS) \to
    (\pt(\Omega_\cS(X)), \widehat \cS)$ of Pervin spaces, it suffices
    to observe that $\psi^{-1}(\widehat S) =S$, for every $S \in
    \cS$.
\end{proof}



We finish this section by remarking that a Pervin space $(X,
  \cS)$ is a fixpoint of this adjunction if and only if the topological space
  $(X, \Omega_\cS(X))$ it defines is sober; we call such Pervin spaces \emph{sober}. A Frith frame $(L, S)$ is a fixpoint of the adjunction above if and only if its underlying
  frame~$L$ is spatial; we call such Frith frames \emph{spatial}. Indeed, this can be easily seen from the description of
  the unit and counit of the adjunction $\Om:\perv\lra \ffrm^{\rm
    op}:\pt$, together with the characterization of the isomorphisms
  in the categories of Pervin spaces (cf. Corollary~\ref{perviniso})
  and of Frith frames that we will shortly provide
  (cf. Corollary~\ref{p:7}).

\subsection{Compact, coherent, and zero-dimensional Frith
  frames}\label{sec:coh}

In this section we discuss the appropriate notions of compactness,
coherence, and zero-dimensionality for Frith frames. Let $L$ be a
frame and $S \subseteq L$. We say that an element $a \in L$ is
\emph{$S$-compact} if whenever $a \leq \bigvee_{i \in I} s_{i}$ for
some $\{s_i\}_{i \in I}\subseteq S$, there exists $F \subseteq I$
finite so that $a \leq \bigvee_{i \in F}s_i$, and we say that $L$ is
\emph{$S$-compact} if its top element is $S$-compact.  Clearly, every
compact element of $L$ is also $S$-compact. If we further assume that
$S$ is join-dense in~$L$ (which is the case when $(L, S)$ is a Frith
frame), then we also have the converse:

\begin{lemma}\label{l:7}
  Let $L$ be a frame, $S \subseteq L$ be a join-dense subset, and $a
  \in L$. Then, $a$ is $S$-compact if and only if $a$ is compact. In
  particular, $L$ is $S$-compact if and only if it is compact.
\end{lemma}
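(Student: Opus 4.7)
The forward direction is already observed in the excerpt: any compact element is $S$-compact because $S$-compactness restricts the covering condition to covers by elements of $S$, which is a weaker requirement. So the plan is to establish the converse: every $S$-compact element is compact.

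Suppose $a$ is $S$-compact and let $\{a_i\}_{i\in I}\subseteq L$ be any family with $a\leq\bigvee_{i\in I}a_i$. The key idea is to refine this cover to a cover by elements of $S$ using the join-density of~$S$. Concretely, for each $i\in I$ pick a set $J_i\subseteq S$ with $a_i=\bigvee J_i$; then
\[
a \;\leq\; \bigvee_{i\in I}a_i \;=\; \bigvee_{i\in I}\bigvee_{s\in J_i} s \;=\; \bigvee\bigl\{s\in S \,\bigm|\, s\in J_i \text{ for some } i\in I\bigr\}.
\]
Now $S$-compactness applied to this cover yields a finite subset of $\bigcup_{i\in I}(\{i\}\times J_i)$, say indexed by pairs $(i_1,s_1),\dots,(i_n,s_n)$, such that $a\leq s_1\vee\cdots\vee s_n$. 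Since $s_k\leq a_{i_k}$ for each~$k$, this gives $a\leq a_{i_1}\vee\cdots\vee a_{i_n}$, so $a$ is compact.

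The final claim about $L$ being $S$-compact iff $L$ is compact is then an immediate instance by taking $a=1$. There is no real obstacle here; the only subtlety is to be a little careful when packaging the indices so that the finite subcover returned by $S$-compactness can be traced back to a finite subset of~$I$, which is handled by recording, alongside each chosen $s\in S$, an $i\in I$ witnessing $s\in J_i$.
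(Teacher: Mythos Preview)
Your proof is correct and follows essentially the same approach as the paper: refine an arbitrary cover to a cover by elements of~$S$ using join-density, apply $S$-compactness, and trace the resulting finite subcover back to the original index set. Your version is actually more explicit than the paper's about the index bookkeeping; the only minor quibble is that you label ``compact $\Rightarrow$ $S$-compact'' as the \emph{forward} direction, whereas in the statement as written it is the converse.
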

\begin{proof}
  Let $a$ be an $S$-compact element and suppose that $a \leq
  \bigvee_{i \in I} a_i$ for some $i \in I$. Since $S$ is join-dense
  in~$L$, we may write each $a_i$ as a join of elements in $S$. Thus,
  since $a$ is $S$-compact, there exists a finite subset $F \subseteq
  I$ satisfying $a \leq \bigvee_{i \in F} a_i$.
\end{proof}
We will say that a Frith frame $(L, S)$ is \emph{compact} if its frame
component $L$ is compact, and we say that $(L, S)$ is \emph{coherent}
if $S$ consists of compact elements of~$L$. We call $\bd{CohFrith}$
the full subcategory of $\ffrm$ determined by the coherent Frith
frames. Since $S$ is, by definition of Frith frame, a bounded
sublattice of $L$, we have that every coherent Frith frame is
compact. Also, every coherent frame $L$ gives rise to a coherent Frith
frame $(L, K(L))$. We now show that every coherent Frith frame is of
this form.
\begin{lemma}\label{l:8}
  Let $(L, S)$ be a Frith frame. Then, the following are equivalent:
  \begin{enumerate}
  \item\label{item:15} $S$ consists of compact elements;
  \item\label{item:16} $S$ is the set of all compact elements of~$L$.
  \end{enumerate}
  In particular, $(L, S)$ is a coherent Frith frame if and only if $L$
  is coherent and $S = K(L)$. 
\end{lemma}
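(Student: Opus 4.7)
The plan is to prove the nontrivial implication $(a)\Rightarrow (b)$ using that $S$ is join-dense in $L$ and closed under finite joins, and then deduce the ``in particular'' statement as an immediate corollary.

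First, the implication $(b)\Rightarrow (a)$ is immediate from the definitions. For $(a)\Rightarrow (b)$, I would take an arbitrary compact element $a\in L$ and show $a\in S$. Since $(L,S)$ is a Frith frame, $S$ is join-dense in $L$, so $a = \bigvee\{s\in S\mid s\leq a\}$. Compactness of $a$ then yields a finite subset $F\subseteq \{s\in S\mid s\leq a\}$ with $a = \bigvee F$. Because $S$ is a bounded sublattice of $L$, it is closed under finite joins, so $\bigvee F\in S$ and hence $a\in S$.

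For the final ``in particular'' claim, the forward direction is straightforward: if $(L,S)$ is a coherent Frith frame then by definition $S$ consists of compact elements, whence by the equivalence just proven $S = K(L)$; combined with the join-density of $S$ in $L$, this says precisely that $L$ is coherent. For the converse direction, assume that $L$ is coherent and $S = K(L)$. Then $K(L)$ is a bounded sublattice of $L$ (closed under finite meets and joins, and containing $0$ and $1$) which is join-dense in $L$, so $(L, K(L))$ is a Frith frame, and it is coherent by construction.

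The only mildly delicate point is verifying that $K(L)$ is indeed a bounded sublattice in the coherent case, but this is standard: in a coherent frame, finite meets and finite joins of compact elements are compact, and both $0$ and $1$ lie in $K(L)$. No further obstacle is expected; the argument is essentially a one-line application of join-density plus closure under finite joins.
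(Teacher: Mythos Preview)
Your proof is correct and follows essentially the same approach as the paper: the key step for $(a)\Rightarrow(b)$ is exactly the join-density of $S$ combined with compactness and closure under finite joins. One minor redundancy: in the converse of the ``in particular'' clause you need not re-verify that $(L,K(L))$ is a Frith frame or that $K(L)$ is a bounded sublattice, since the hypothesis already gives you a Frith frame $(L,S)$ with $S=K(L)$; coherence of $(L,S)$ is then immediate from the definition.
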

\begin{proof}
  The implication \ref{item:16} $\implies$ \ref{item:15} is
  trivial. Conversely, suppose that $S \subseteq K(L)$, and let $a \in
  K(L)$. Since $(L, S)$ is a Frith frame, we have that $S$ is
  join-dense in $L$, and thus we may write $a = \bigvee_{i \in I}s_i$
  for some subset $\{s_i\}_{i \in I} \subseteq S$. But compactness
  of~$a$ yields the existence of a finite subset $F \subseteq I$
  satisfying $a = \bigvee_{i \in F}s_i$. Since $S$ is closed under
  finite joins, it follows that $a$ belongs to $S$.
\end{proof}
A consequence of Lemma~\ref{l:8} is that, if $(L, S)$ and $(M, T)$ are
coherent Frith frames, then a frame homomorphism $h: L \to M$ induces
a morphism between the corresponding Frith frames if and only if it is
coherent. Thus, the categories $\cohfrm$ of coherent frames and
$\cohfrith$ of coherent Frith frames are isomorphic.
In particular, since $\dlat$ and $\cohfrm$ are equivalent categories,
we also have and equivalence $\dlat \cong \cohfrith$.  More generally,
we have the following analogue of Proposition~\ref{p:15}:
\begin{proposition} \label{p:17} There is an adjunction $\idl(-)
  \dashv U$, where $U: \ffrm \to \dlat$ is the forgetful functor and
  $\idl(-): \dlat \to \ffrm$ is defined by
  \[\idl(S) := (\idl(S), S) \qquad \text{ and }\qquad \idl(h: S \to T) :=
    (\idl(h): (\idl(S), S) \to (\idl(T), T)).\]
  Moreover, the corestriction of $\idl(-)$ to \cohfrith{} induces an
  equivalence of categories whose inverse is the suitable restriction
  of~$U$.
\end{proposition}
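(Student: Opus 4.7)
The plan is to reduce everything to Proposition~\ref{p:15} and Lemma~\ref{l:8}, essentially lifting the classical adjunction $\idl(-) \dashv K(-)$ between $\dlat$ and $\Frm$ to the setting of Frith frames by carrying the ``designated sublattice'' along.

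First, I would verify that $\idl(-):\dlat\to\ffrm$ is well-defined. On objects, $(\idl(S),S)$ (where we identify $s\in S$ with the principal ideal ${\downarrow}s$) is a Frith frame because every ideal is the directed join of the principal ideals it contains, so $S$ is join-dense in $\idl(S)$. On morphisms, given $h:S\to T$ in $\dlat$, Proposition~\ref{p:15} provides a frame homomorphism $\idl(h):\idl(S)\to\idl(T)$ sending ${\downarrow}s$ to ${\downarrow}h(s)$; this maps $S$ into $T$, hence is a morphism in $\ffrm$. Functoriality is inherited from Proposition~\ref{p:15}.

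Next, to establish the adjunction, I would construct the natural bijection
\[
\ffrm\bigl((\idl(S),S),(L,R)\bigr) \;\cong\; \dlat(S,R)
\]
for any lattice $S$ and Frith frame $(L,R)$. In one direction, a morphism $g:(\idl(S),S)\to(L,R)$ restricts to a lattice homomorphism $g|_S:S\to R$. Conversely, a lattice homomorphism $f:S\to R$ composes with the inclusion $R\hookrightarrow L$ to give a lattice homomorphism into a frame, which by Corollary~\ref{c:7} extends uniquely to a frame homomorphism $\widehat f:\idl(S)\to L$ with $\widehat f({\downarrow}s)=f(s)\in R$; hence $\widehat f$ is a morphism of Frith frames. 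These two passages are mutually inverse and natural in both arguments, so the unit is the identity $S\to U\idl(S)=S$ and the adjunction follows at once.

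For the equivalence, I would use Lemma~\ref{l:8}: a Frith frame is coherent precisely when it has the form $(L,K(L))$ with $L$ coherent. Restricting the two functors, one direction is immediate, $U\circ\idl(-)=\id_{\dlat}$. For the other, Proposition~\ref{p:15} provides a natural frame isomorphism $\idl(K(L))\to L$ sending an ideal $J$ of $K(L)$ to $\bigvee J$, and its restriction to principal ideals identifies $K(L)\subseteq\idl(K(L))$ with $K(L)\subseteq L$; hence it is an isomorphism $(\idl(K(L)),K(L))\cong(L,K(L))$ in $\cohfrith$, giving the required natural isomorphism $\idl\circ U\cong\id_{\cohfrith}$.

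The only nontrivial point, and the one deserving care, is the verification that the extension $\widehat f$ from Corollary~\ref{c:7} indeed lands in $R$ on elements of $S$ (not merely in $L$); but this is automatic because $\widehat f$ is constructed so as to agree with $f$ on principal ideals. Everything else is routine bookkeeping from results already recalled in the preliminaries.
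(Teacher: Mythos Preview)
Your proposal is correct and follows essentially the same approach as the paper: both establish the adjunction via the natural isomorphism $\ffrm(\idl(-),-)\cong\dlat(-,U-)$ coming from Proposition~\ref{p:15} (you spell out the bijection using Corollary~\ref{c:7}, the paper just cites Proposition~\ref{p:15}), and both obtain the equivalence by combining Proposition~\ref{p:15} with Lemma~\ref{l:8} (the paper phrases this as composing $\dlat\cong\cohfrm$ with the isomorphism $\cohfrm\equiv\cohfrith$, which is exactly what your direct verification of $U\circ\idl=\id$ and $\idl\circ U\cong\id$ amounts to).
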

\begin{proof}
  It is clear that $\idl(-)$ is a well-defined functor. The fact that
  $\idl(-)$ is left adjoint to~$U$ follows from the existence of a
  natural isomorphism $\ffrm(\idl(-), -) \cong \dlat(-, U-)$, which is
  a straightforward consequence of Proposition~\ref{p:15}.

  Now, again by Proposition~\ref{p:15}, the functors $\idl(-):\dlat
  \to \cohfrm$ and $K(-): \cohfrm \to \dlat$ are mutually inverse, up
  to natural isomorphism. Composing these with the isomorphism
  $\cohfrm \equiv \cohfrith$, we obtain the equivalence described in
  the last statement.
\end{proof}

Just like for frames, we also have in this setting that $U: \cohfrm
\to \dlat$ is both a left and a right adjoint of $\idl(-): \dlat \to
\cohfrm$.  Therefore, we have the following version of
Proposition~\ref{p:16}:
\begin{proposition}
  The category $\cohfrith$ is a full coreflective subcategory of
  $\ffrm$. The coreflector is the functor $\idl(-)\circ U:\ffrm \to
  \cohfrith$.
\end{proposition}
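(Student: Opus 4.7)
My plan is to construct the coreflection explicitly, leveraging both the adjunction $\idl(-) \dashv U$ from Proposition~\ref{p:17} and the equivalence $\cohfrith \cong \dlat$ that it restricts to. First, for each Frith frame $(M, T)$, I would define the counit component $\epsilon_{(M,T)} : (\idl(T), T) \to (M, T)$ as the frame homomorphism extending the inclusion $T \hookrightarrow M$ via Corollary~\ref{c:7}, namely $J \mapsto \bigvee J$. This is a morphism of Frith frames because it acts as the identity on $T$ when elements $t \in T$ are identified with principal ideals $\downarrow t$, and its codomain is indeed $\idl \circ U(M, T) = (\idl(T), T)$, which is coherent by Lemma~\ref{l:8}.

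Next, I would verify the universal property. Fix a coherent Frith frame $(L, S)$ and a morphism $h : (L, S) \to (M, T)$ in $\ffrm$. By Lemma~\ref{l:8} we have $S = K(L)$, and by the equivalence half of Proposition~\ref{p:17} there is a canonical isomorphism $(L, S) \cong (\idl(S), S)$. The restriction $h|_S : S \to T$ is a lattice homomorphism, and I would take $\tilde h := \idl(h|_S) : \idl(S) \to \idl(T)$, which is automatically a morphism in $\cohfrith$. Routine checks then show that $\epsilon_{(M,T)} \circ \tilde h$ and $h$ agree on the join-dense sublattice $S$, hence coincide on all of $L$. Uniqueness of $\tilde h$ follows in the same way: any other lift is determined by its restriction to $S$ (again by join-density and the fact that morphisms in $\cohfrith$ must map $S$ into $T$), and composing with $\epsilon_{(M,T)}$, which fixes $T$ elementwise, forces that restriction to be $h|_S$.

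The one step worth highlighting is the identification $(L, S) \cong (\idl(S), S)$ for coherent $(L, S)$, which is not a tautology but the content of the equivalence $\cohfrith \cong \dlat$ from Proposition~\ref{p:17}. Once that is in hand, the result is essentially formal: it is the transport, across the equivalence on the coherent side, of the adjunction $\idl(-) \dashv U$ between $\dlat$ and $\ffrm$. For this reason I do not anticipate any genuine obstacle, only a careful bookkeeping of the two roles played by $U$ and $\idl(-)$ (as an adjunction between $\dlat$ and $\ffrm$, and simultaneously as an equivalence between $\dlat$ and $\cohfrith$).
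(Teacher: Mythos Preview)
Your proposal is correct and is essentially the explicit unpacking of the paper's argument. The paper argues abstractly: since the restriction $U:\cohfrith\to\dlat$ is both a left and a right adjoint of $\idl(-):\dlat\to\cohfrith$ (because they form an equivalence), composing the adjunction $\idl(-)\dashv U$ between $\dlat$ and $\ffrm$ with $U\dashv\idl(-)$ on the coherent side yields $\iota\dashv \idl(-)\circ U$ for the inclusion $\iota:\cohfrith\hookrightarrow\ffrm$. You instead construct the counit $\epsilon_{(M,T)}:J\mapsto\bigvee J$ and verify the universal property by hand, which is precisely what that composition of adjunctions produces; your own final paragraph already identifies this.
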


On the other hand, unlike what happens for the frame ideal completion
$\idl(-): \Frm \to \cohfrm$, the functor $\idl(-) \circ U: \ffrm \to
\cohfrm$ is idempotent. Indeed, that may be seen as a consequence of
having a full embedding $\cohfrm \hookrightarrow \Frm$:
\begin{proposition}[{\cite[Proposition 3.4.1]{Borceux94-vol1}}]
  Let $F:\ca{D}\to \ca{C}$ and $G: \cC\to \cD$ be two functors, and
  suppose that $F$ is the left adjoint of $G$. Then, $F$ is full and
  faithful if and only if the unit of $F \dashv G$ is a natural
  isomorphism.
\end{proposition}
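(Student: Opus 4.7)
The plan is to reduce the question about the functor $F$ on hom-sets to a question about a single natural transformation by transporting everything through the adjunction bijection.

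Let $\eta:\id_\cD \Rightarrow GF$ denote the unit and $\varepsilon:FG \Rightarrow \id_\cC$ the counit of $F \dashv G$, and let $\phi_{d,c}:\cC(F(d),c) \to \cD(d,G(c))$ be the natural bijection given by $f \mapsto G(f)\circ \eta_d$. The definition of full and faithfulness is that, for every pair of objects $d,d' \in \cD$, the map
\[F_{d,d'}:\cD(d,d') \to \cC(F(d),F(d'))\]
is bijective. I would first compute the composite $\phi_{d,F(d')}\circ F_{d,d'}:\cD(d,d') \to \cD(d,GF(d'))$ on an arrow $g:d \to d'$. By definition of $\phi$ followed by naturality of $\eta$ applied to $g$, one has
\[\phi_{d,F(d')}(F(g)) \,=\, G(F(g))\circ \eta_d \,=\, \eta_{d'}\circ g,\]
so the composite is simply postcomposition with $\eta_{d'}$.

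Now, since $\phi_{d,F(d')}$ is a bijection, $F_{d,d'}$ is a bijection if and only if postcomposition with $\eta_{d'}$ induces a bijection $\cD(d,d') \to \cD(d,GF(d'))$ for every $d$. The forward direction of the equivalence in the statement is then immediate: if $\eta$ is a natural isomorphism then postcomposition with $\eta_{d'}$ is a bijection, hence $F$ is fully faithful. For the converse, assuming $F$ is fully faithful, postcomposition with $\eta_{d'}$ is a bijection for every $d$, so by the Yoneda lemma applied to the representable functor $\cD(-,d') \to \cD(-,GF(d'))$ the morphism $\eta_{d'}$ is an isomorphism; alternatively, one can argue directly by taking $d := GF(d')$ to find a right inverse and $d:=d'$ to verify it is also a left inverse.

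The argument is almost entirely formal, so there is no real obstacle; the only subtle step is the identification $G(F(g))\circ \eta_d = \eta_{d'}\circ g$, which uses naturality of $\eta$ and is what lets one transfer the full-faithfulness condition on $F$ into the invertibility of the components of $\eta$.
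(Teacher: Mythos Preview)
Your argument is correct. The paper does not supply its own proof of this statement; it simply cites Borceux~\cite[Proposition~3.4.1]{Borceux94-vol1} as a known fact, so there is nothing to compare against directly. Your reduction via the adjunction bijection $\phi_{d,F(d')}$ to the statement that $\eta_{d'}\circ(-)$ is a bijection, followed by either Yoneda or the explicit construction of a two-sided inverse, is exactly the standard proof one finds in the cited reference.
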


We finally define what is a zero-dimensional Frith frame. First
observe that, if $S \subseteq B(L)$ and $(L, S)$ is a Frith frame,
then the frame $L$ is zero-dimensional. Thus, we will say that $(L,
S)$ is \emph{zero-dimensional} provided $S$ consists of complemented
elements, and we have that $L$ is a zero-dimensional frame if and only
if $(L, B(L))$ is a zero-dimensional Frith frame.
Let us analyze the relationship between compactness, coherence, and
zero-dimensionality of Frith frames. If $(L, S)$ is compact and
zero-dimensional then, $L$ is compact and zero-dimensional, too, and,
by Proposition~\ref{p:12}, we have $B(L) = K(L)$. Therefore, $S$
consists of compact elements and thus, $(L, S)$ is coherent. Hence, we
have the following analogue of Proposition~\ref{p:12}:

\begin{lemma}\label{l:13}
  Let $(L, S)$ be a Frith frame. If $(L, S)$ is compact and
  zero-dimensional, then it is coherent.
\end{lemma}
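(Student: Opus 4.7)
The plan is to unwind the definitions and apply Proposition~\ref{p:12}\ref{item:14}. By the definition of compact Frith frame, $L$ is a compact frame. By the definition of zero-dimensional Frith frame, $S$ consists of complemented elements, i.e.\ $S \subseteq B(L)$. To conclude that $(L,S)$ is coherent, I need to show that $S$ consists of compact elements, that is, $S \subseteq K(L)$.

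The key step is Proposition~\ref{p:12}\ref{item:14}, which says that in a compact frame every complemented element is compact, giving $B(L) \subseteq K(L)$. Chaining the inclusions yields
\[
S \;\subseteq\; B(L) \;\subseteq\; K(L),
\]
so each element of $S$ is compact, and $(L,S)$ is coherent by definition.

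There is essentially no obstacle: the lemma is a direct transfer of Proposition~\ref{p:12} to the Frith frame setting, since the notions of compactness, zero-dimensionality, and coherence for $(L,S)$ have been defined so as to reduce to conditions on $L$ and on the inclusion of $S$ in $B(L)$ or $K(L)$. The only thing worth noting is that we do not need Lemma~\ref{l:8} to identify $S$ with $K(L)$: the definition of coherent Frith frame only requires $S \subseteq K(L)$, and this inclusion is exactly what Proposition~\ref{p:12}\ref{item:14} provides.
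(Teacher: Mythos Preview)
Your proof is correct and follows essentially the same approach as the paper's argument (which appears in the paragraph immediately preceding the lemma). The only minor difference is that the paper additionally observes that $L$ itself is zero-dimensional and then invokes Proposition~\ref{p:12} to obtain the equality $B(L)=K(L)$, whereas you use only Proposition~\ref{p:12}\ref{item:14} to get the inclusion $B(L)\subseteq K(L)$, which is indeed all that is needed.
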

\subsection{Limits and colimits}\label{sec:lim-colim}

In this section we show that the category of Frith frames has all
(co)products and (co)equalizers, and provide their description. In
particular, it follows that {\ffrm} is a complete and cocomplete
category.

Let us start with products and equalizers. By
Proposition~\ref{p:17}, the forgetful functor $\ffrm \to \dlat$ is
a right adjoint. Since right adjoints preserve limits, we
automatically know how to compute the lattice component of every
existing limit in {\ffrm}. With this in mind, we may easily prove
  the following characterizations:

\begin{proposition}\label{pr}
  Let $\{(L_i, S_i)\}_{i \in I}$ be a family of Frith frames. Then,
  the product $\prod_{i\in I} (L_i,S_i)$ in the category of Frith
  frames is $(\prod_{i \in I}L_i, \prod_{i \in I}S_i)$, and the
  product map $\pi_i: (\prod_{i \in I}L_i, \prod_{i \in I}S_i) \to
  (L_i, S_i)$ is the $i$-th projection.
\end{proposition}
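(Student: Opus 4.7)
The plan is to show directly that the Frith frame $(\prod_{i \in I}L_i, \prod_{i \in I}S_i)$, equipped with the coordinate projections $\pi_i$, realizes the product in $\ffrm$. Since products in $\Frm$ are computed coordinate-wise on the underlying set, it is immediate that $\prod_{i \in I}S_i$ sits as a bounded sublattice of $\prod_{i \in I}L_i$. Consequently, once join-density is verified, the pair will indeed be a Frith frame, and each projection $\pi_i$ will automatically be a morphism of Frith frames, being a frame homomorphism that sends $\prod_{j \in I}S_j$ into $S_i$ by construction.

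The substantive step is thus to establish join-density of $\prod_{i \in I}S_i$ in $\prod_{i \in I}L_i$. Given an arbitrary tuple $(a_j)_{j \in I}$, I would fix each coordinate $j \in I$ and each $s \in S_j$ with $s \leq a_j$, and form the tuple with $s$ in position $j$ and $0$ elsewhere. Every such tuple lies in $\prod_{i \in I}S_i$ (because $0 \in S_i$ for each $i$) and is below $(a_j)_{j \in I}$. Since joins in $\prod_{i \in I}L_i$ are computed coordinate-wise, taking the join of the whole family of such tuples recovers $(a_j)_{j \in I}$, using the join-density of each $S_j$ in $L_j$ to reconstruct each coordinate. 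This is the step I expect to be the main (albeit modest) obstacle; everything else is forced by the analogous construction in $\Frm$.

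For the universal property, I would let $(M, T)$ be a Frith frame and $\{f_i\colon (M, T) \to (L_i, S_i)\}_{i \in I}$ be a cone of morphisms. The product in $\Frm$ produces a unique frame homomorphism $f \colon M \to \prod_{i \in I}L_i$ with $\pi_i \circ f = f_i$ for every $i$. For any $t \in T$, we have $f(t) = (f_i(t))_{i \in I}$ with each $f_i(t) \in S_i$, so $f(t)$ lies in $\prod_{i \in I}S_i$; hence $f$ is a morphism of Frith frames, and its uniqueness is inherited from the universal property in $\Frm$. As a sanity check one may note that this is in line with Proposition~\ref{p:17}: the forgetful functor $\ffrm \to \dlat$ is a right adjoint, so it preserves products, which forces the lattice component of any product in $\ffrm$ to be precisely $\prod_{i \in I}S_i$.
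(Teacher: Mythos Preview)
Your proposal is correct and follows essentially the same approach as the paper, which omits the details and simply remarks that the characterization is easy once one knows (via Proposition~\ref{p:17}) that the lattice component of any limit in $\ffrm$ must be the corresponding limit in $\dlat$. Your direct verification of join-density and of the universal property is exactly the routine check the paper leaves to the reader.
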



\begin{proposition}\label{eq}
  Given two morphisms $h_1,h_2:(L,S)\rightrightarrows (M,T)$ in the
  category of Frith frames, let $S_{h_1 = h_2}$ denote the sublattice
  $\{s \in S \mid h_1(s) = h_2(s)\}$ of $S$, and $K$ be the subframe
  of $L$ generated by $S_{h_1 = h_2}$. Then, the equalizer of $h_1$
  and $h_2$ is the subframe inclusion $e:(K,S_{h_1=h_2})\inclu (L,S)$.
\end{proposition}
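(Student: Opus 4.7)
The plan is to verify three things: (i) that $(K, S_{h_1=h_2})$ is a well-defined Frith frame and $e$ is a morphism in $\ffrm$ equalizing $h_1$ and $h_2$; (ii) that any competing morphism $g:(N,U)\to (L,S)$ with $h_1\circ g = h_2\circ g$ factors through $e$; and (iii) uniqueness of the factorization.

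For (i), note that $S_{h_1=h_2}$ is a sublattice of $S$ containing $0$ and $1$, because $h_1,h_2$ agree on $0,1$ and preserve finite meets and binary joins, so the set on which they agree is closed under these operations. Since $S_{h_1=h_2}$ is closed under finite meets, the subframe $K=\langle S_{h_1=h_2}\rangle_{\Frm}$ is exactly the set of arbitrary joins of elements of $S_{h_1=h_2}$ (using frame distributivity), whence $S_{h_1=h_2}$ is join-dense in $K$. Thus $(K, S_{h_1=h_2})$ is a Frith frame. The inclusion $e$ is clearly a frame homomorphism sending $S_{h_1=h_2}$ into $S$, hence a morphism in $\ffrm$. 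To see $h_1\circ e = h_2\circ e$, observe that both $h_1$ and $h_2$ preserve arbitrary joins and agree on $S_{h_1=h_2}$; so they agree on all joins thereof, that is, on $K$.

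For (ii), let $g:(N,U)\to (L,S)$ satisfy $h_1\circ g = h_2\circ g$. For every $u\in U$ one has $g(u)\in S$ by the morphism condition, and $h_1(g(u))=h_2(g(u))$, so $g(u)\in S_{h_1=h_2}\subseteq K$. Since $U$ is join-dense in $N$ and $g$ preserves arbitrary joins, every element of $g(N)$ is a join in $L$ of elements of $S_{h_1=h_2}$, and thus lies in $K$. Consequently $g$ corestricts to a frame homomorphism $\widetilde g : N \to K$ which sends $U$ into $S_{h_1=h_2}$, giving a morphism $\widetilde g:(N,U)\to (K, S_{h_1=h_2})$ with $e\circ\widetilde g = g$.

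Uniqueness in (iii) is immediate because $e$ is a subframe inclusion, hence a monomorphism in $\ffrm$. I do not expect any serious obstacle: the only slightly subtle point is checking that $g(N)\subseteq K$, which requires the characterization of $K$ as the set of joins of elements of $S_{h_1=h_2}$ together with the join-density of $U$ in $N$ from the Frith frame axiom on $(N,U)$. Everything else is either by definition or follows from standard frame-theoretic manipulations.
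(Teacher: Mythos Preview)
Your proof is correct and is essentially the direct verification the paper has in mind. The paper does not spell out a proof of this proposition; it only remarks beforehand that the forgetful functor $\ffrm \to \dlat$ is a right adjoint (Proposition~\ref{p:17}) and hence preserves limits, which pins down the lattice component as the $\dlat$-equalizer $S_{h_1=h_2}$, and then says the characterization is ``easily'' proved. Your argument carries out exactly this easy verification of the universal property, including the one point the paper leaves implicit: that $K$, being generated by a meet-closed set, consists precisely of joins of elements of $S_{h_1=h_2}$, which is what makes both $h_1\circ e = h_2\circ e$ and the factorization $g[N]\subseteq K$ go through.
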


Colimits in $\ffrm$ are also computed as expected. Note that, by
  Proposition~\ref{p:14}, the forgetful functor $\ffrm \to \Frm$ is a
  left adjoint. Thus, it preserves existing colimits.



\begin{proposition}\label{copr}
  Let $\{(L_i, S_i)\}_{i \in I}$ be a family of Frith frames, and
  $\{\iota_i: L_i \hookrightarrow \bigoplus_{i \in I}L_i\}_{i \in I}$
  be the coproduct injections in {\Frm}.  The coproduct $\bigoplus_{i
    \in I} (L_i,S_i)$ of Frith frames in $\bd{Frith}$ is
  $(\bigoplus_{i \in I}L_i, S)$, where $S$ is the sublattice of the
  coproduct $\bigoplus_{i\in I}L_i$ generated by $\bigcup_i
  \iota_i[S_i]$. Moreover, the $i$-th coproduct map is the morphism of
  Frith frames $\iota_i:(L_i, S_i) \to (\bigoplus_{i \in I}L_i, S)$
  defined by $\iota_i$.
\end{proposition}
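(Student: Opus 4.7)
The plan is to verify three things in order: (i) $(\bigoplus_{i \in I}L_i, S)$ is a well-defined Frith frame; (ii) each frame coproduct injection $\iota_i$ is a morphism of Frith frames $(L_i, S_i) \to (\bigoplus_{i \in I}L_i, S)$; and (iii) the family $\{\iota_i\}_{i \in I}$ satisfies the universal property of the coproduct in $\ffrm$.

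For (i), the only nontrivial point is that $S$ is join-dense in the frame $\bigoplus_{i \in I} L_i$. I would argue as follows: the frame $\bigoplus_{i\in I} L_i$ is generated by $\bigcup_i \iota_i[L_i]$ as a frame, meaning every element is a join of finite meets of elements from this set. Because $\iota_i$ preserves arbitrary joins and $S_i$ is join-dense in $L_i$, every element of $\iota_i[L_i]$ is already a join of elements of $\iota_i[S_i]\subseteq S$. Applying the frame distributive law to a finite meet of such joins rewrites it as a join of finite meets of elements of $S$; since $S$ is a sublattice, these meets lie in $S$. Hence every element of $\bigoplus_{i\in I}L_i$ is a join of elements of $S$, as needed. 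Point (ii) is immediate, since by construction $\iota_i[S_i]\subseteq S$.

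For (iii), suppose $(M, T)$ is a Frith frame and $h_i : (L_i, S_i) \to (M, T)$ are morphisms of Frith frames. The universal property of the coproduct in $\Frm$ (recall Proposition~\ref{p:14}, which says the forgetful functor $\ffrm\to\Frm$ is a left adjoint, hence that this must be the underlying frame coproduct) supplies a unique frame homomorphism $h: \bigoplus_{i\in I}L_i \to M$ with $h\circ \iota_i = h_i$ for every $i\in I$. It remains to check that $h$ is a morphism of Frith frames, i.e. $h[S]\subseteq T$. Since $S$ is generated as a lattice by $\bigcup_i \iota_i[S_i]$, and $h$ preserves finite meets and finite joins, and $h(\iota_i(s)) = h_i(s)\in T$ for every $s\in S_i$, an easy closure argument (using that $T$ is closed under finite meets and finite joins) yields $h[S]\subseteq T$. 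Uniqueness of $h$ as a morphism of Frith frames follows from uniqueness as a frame homomorphism.

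No step seems to present a real obstacle; the only mild subtlety is the use of the distributive law in (i) to move from the frame-theoretic generation by $\bigcup_i \iota_i[L_i]$ to lattice-theoretic generation by $\bigcup_i \iota_i[S_i]$ followed by joins. Everything else is a direct consequence of the universal property of the frame coproduct combined with the definition of $S$ as the sublattice generated by the images of the $S_i$.
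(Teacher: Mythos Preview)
Your proof is correct and follows the approach the paper implicitly suggests: the paper does not spell out a proof of this proposition, but precedes it with the remark that the forgetful functor $\ffrm\to\Frm$ is a left adjoint (Proposition~\ref{p:14}) and hence preserves existing colimits, leaving the direct verification to the reader. Your three-step verification---join-density of $S$ via the distributive law, the trivial check that each $\iota_i$ is a Frith morphism, and the universal property using the frame coproduct together with $h[S]\subseteq T$---is exactly that verification, carried out in full.
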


\begin{proposition}\label{coeq}
  Let $h_1,h_2:(L,S)\to (M,T)$ be two morphisms of Frith frames. Then,
  their coequalizer is $q: (M, T) \to (K, R)$, where $q: M
  \twoheadrightarrow K$ is the coequalizer of the frame homomorphisms
  $h_1, h_2$ in $\Frm$ and $R = q[T]$.
\end{proposition}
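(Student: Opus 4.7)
The plan is to reduce everything to the universal property of the coequalizer in $\Frm$, using the earlier observation that the forgetful functor $\ffrm\to\Frm$ is a left adjoint (Proposition~\ref{p:14}) and hence preserves colimits: this already forces the frame component of the coequalizer in $\ffrm$ to be $K$, so the only content is in picking the right sublattice and checking the universal property relative to morphisms of Frith frames.

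First I would verify that $(K,R)$ is a Frith frame and that $q:(M,T)\to(K,R)$ is a well-defined morphism in $\ffrm$. Since $q:M\epi K$ is a frame homomorphism and $T$ is a bounded sublattice of $M$, the image $R = q[T]$ is a bounded sublattice of $K$. To see that $R$ is join-dense in $K$, I would use surjectivity of $q$ together with the fact that every element of $M$ is a join of elements of $T$ (because $(M,T)$ is a Frith frame) and that $q$ preserves arbitrary joins: given $k\in K$, write $k=q(m)$ with $m=\bigvee_i t_i$, $t_i\in T$, so that $k=\bigvee_i q(t_i)$ with $q(t_i)\in R$. By construction $q[T]\subseteq R$, so $q$ is a morphism of Frith frames, and the identity $q\circ h_1=q\circ h_2$ holds because $q$ is the coequalizer of $h_1,h_2$ in $\Frm$.

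Next I would check the universal property in $\ffrm$. Let $f:(M,T)\to(N,U)$ be a morphism of Frith frames with $f\circ h_1=f\circ h_2$. Viewing $f$ as a frame homomorphism, the universal property of the coequalizer $q$ in $\Frm$ yields a unique frame homomorphism $\widetilde f:K\to N$ such that $\widetilde f\circ q=f$. It remains to observe that $\widetilde f$ is in fact a morphism of Frith frames: for $r\in R$ pick $t\in T$ with $r=q(t)$, and compute
\[
\widetilde f(r)=\widetilde f(q(t))=f(t)\in U,
\]
using that $f$ is a morphism of Frith frames. Thus $\widetilde f[R]\subseteq U$, so $\widetilde f:(K,R)\to(N,U)$ is a morphism in $\ffrm$. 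Uniqueness at the level of $\ffrm$ is inherited from uniqueness at the level of $\Frm$.

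There is no serious obstacle: the only potentially delicate point is ensuring that the ``right'' choice of sublattice on $K$ is $R=q[T]$ and not some larger lattice. This is forced by two opposing requirements that coincide here: for $q$ to be a morphism of Frith frames we need $R\supseteq q[T]$, while for the factorization $\widetilde f$ to exist for arbitrary $(N,U)$ we need $R\subseteq q[T]$ (otherwise elements of $R\setminus q[T]$ would impose extra constraints on $\widetilde f$ that $f$ need not satisfy). Taking $R=q[T]$ balances both conditions, completing the verification.
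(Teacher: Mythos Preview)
Your proof is correct and follows exactly the approach the paper intends: the paper states Proposition~\ref{coeq} without proof, having noted beforehand that the forgetful functor $\ffrm\to\Frm$ is a left adjoint (Proposition~\ref{p:14}) and hence preserves colimits, which pins down the frame component as~$K$. Your argument is precisely the routine verification the paper leaves to the reader, and the final paragraph explaining why $R=q[T]$ is the only possible choice is a helpful observation even if not strictly required.
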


As a consequence of Propositions~\ref{pr}, \ref{eq}, \ref{copr}, and
\ref{coeq}, we have:
\begin{corollary}
  The category $\bd{Frith}$ is complete and cocomplete.
\end{corollary}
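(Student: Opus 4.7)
The plan is to invoke the standard criterion that a category is complete if and only if it has all small products and all equalizers, and dually cocomplete if and only if it has all small coproducts and all coequalizers (see, e.g., \cite[Theorem~2.8.1]{Borceux94-vol1}). Since the four preceding propositions provide explicit constructions of products, equalizers, coproducts, and coequalizers in \ffrm{} respectively, the corollary follows immediately.

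More concretely, I would first note that Proposition~\ref{pr} furnishes products of arbitrary small families of Frith frames, and Proposition~\ref{eq} furnishes equalizers of arbitrary parallel pairs of morphisms; together these give completeness. Dually, Proposition~\ref{copr} gives all small coproducts and Proposition~\ref{coeq} gives all coequalizers, yielding cocompleteness.

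There is no real obstacle here, as all the work has been done in the preceding propositions; the corollary is simply the assembly of those facts via the standard limit/colimit existence theorem. The only remark worth making explicit is that the (co)limits constructed in \ffrm{} are compatible with the forgetful functors to \Frm{} and \dlat{}, which is built into the descriptions already given.

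\begin{proof}
  It is a standard fact that a category admits all small limits if and only if it has all small products and all equalizers, and dually for colimits (see~\cite[Theorem~2.8.1]{Borceux94-vol1}). By Propositions~\ref{pr} and~\ref{eq}, the category \ffrm{} has arbitrary small products and equalizers, hence it is complete. By Propositions~\ref{copr} and~\ref{coeq}, it has arbitrary small coproducts and coequalizers, hence it is cocomplete.
\end{proof}
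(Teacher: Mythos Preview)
Your proof is correct and takes essentially the same approach as the paper, which simply states the corollary as an immediate consequence of Propositions~\ref{pr}, \ref{eq}, \ref{copr}, and~\ref{coeq} without giving any further argument. Your version is slightly more explicit in citing the standard products-plus-equalizers criterion, but there is no substantive difference.
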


\subsection{Special morphisms}\label{sec:spec-mor}

This section is devoted to the study of some special morphisms in the
category of Frith frames.  More precisely, we will start by
characterizing the monomorphisms, the extremal epimorphisms, and the
isomorphisms.  In the setting of frames, extremal epimorphisms are
relevant because they are the pointfree notion of \emph{subspace
  embedding}. We will show a Pervin-Frith analogue of this. We will
also see that, unlike what happens for frames, not every extremal
epimorphism is regular.

Using standard arguments, we may show the following:
\begin{lemma}\label{anothercoref}
  A morphism $m:(L, S)\ra (M, T)$ of Frith frames is a monomorphism if
  and only if the map $m: L \to M$ is injective.
\end{lemma}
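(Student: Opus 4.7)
The plan is to prove each direction separately.

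For the backward direction, suppose the underlying frame homomorphism $m:L \to M$ is injective, hence a monomorphism in $\Frm$. Given two Frith frame morphisms $f_1, f_2: (K,R) \rightrightarrows (L,S)$ with $m \circ f_1 = m \circ f_2$, these equalities also hold at the level of frame homomorphisms, so $f_1 = f_2$ as frame homomorphisms, and therefore as morphisms of Frith frames. In effect, this is just the standard observation that the faithful forgetful functor $\ffrm \to \Frm$ reflects monomorphisms.

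For the forward direction, I would argue by contradiction. Assume $m$ is a monomorphism in $\ffrm$ but that $m:L \to M$ is not injective; pick $a,b \in L$ with $a \neq b$ and $m(a) = m(b)$. The key observation is that join-density of $S$ in $L$ lets us witness this failure of injectivity by an element of $S$. Indeed, writing $a$ and $b$ as joins from $S$ and using $a \neq b$, we find some $s \in S$ with (without loss of generality) $s \leq a$ but $s \not\leq b$. Set $s' := s \we b$; since $S$ is a sublattice, $s' \in S$, and $s' < s$. Applying $m$ one computes
\[ m(s') = m(s) \we m(b) = m(s) \we m(a) = m(s \we a) = m(s), \]
so we have produced $s, s' \in S$ with $s \neq s'$ and $m(s) = m(s')$.

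The remaining step is to lift this to a pair of distinct Frith frame morphisms into $(L,S)$ that are equalized by $m$. For this I would use the ``free Frith frame on one generator'', namely the three-element chain $F := \{0 < u < 1\}$ viewed as the Frith frame $(F,F)$. A frame homomorphism $F \to L$ is determined uniquely by where it sends $u$, and any choice in $L$ is admissible; moreover, such a homomorphism is a morphism of Frith frames $(F,F) \to (L,S)$ precisely when $u$ is sent to an element of~$S$. Hence $s$ and $s'$ induce distinct morphisms $f_s, f_{s'}: (F,F) \rightrightarrows (L,S)$ with $f_s(u) = s$ and $f_{s'}(u) = s'$, and the equality $m(s) = m(s')$ forces $m \circ f_s = m \circ f_{s'}$, contradicting that $m$ is a monomorphism.

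The only mildly delicate point is the reduction step where one extracts the pair $(s,s') \in S \times S$ from a general pair $(a,b) \in L \times L$ witnessing non-injectivity; once this is in place, the rest is the routine ``probe by the free object'' technique. I do not expect any serious obstacle here, which is consistent with the authors' remark that the argument is standard.
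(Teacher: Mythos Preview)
Your backward direction is fine: injectivity of $m$ makes it a monomorphism in $\Frm$, and the faithful forgetful functor $\ffrm \to \Frm$ reflects monomorphisms.

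The forward direction has a genuine gap. You set $s' := s \wedge b$ and assert $s' \in S$ ``since $S$ is a sublattice'', but $b$ lies only in $L$, not necessarily in $S$; a sublattice is closed under meets of \emph{its own} elements, not under meets with arbitrary elements of the ambient frame. So $s \wedge b$ need not belong to~$S$.

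This gap is not repairable within your strategy, which relies on producing distinct $s, s' \in S$ with $m(s) = m(s')$. Consider the Frith frame $(L,S) = (\omega+2,\, (\omega+2)\setminus\{\omega\})$ of Example~\ref{sec:5}, and let $q\colon L \twoheadrightarrow \omega+1$ be the quotient by~$\Delta_\omega$, viewed as a Frith morphism $q:(L,S) \to (\omega+1, \omega+1)$. Then $q$ is not injective (it identifies $\omega$ and $\omega+1$), yet $q|_S$ \emph{is} injective, so no such pair $s,s'$ exists. In fact the same observation shows that $q$ is a monomorphism in $\ffrm$: if $q f_1 = q f_2$ for Frith morphisms $f_i:(K,R)\to(L,S)$, then $f_1|_R = f_2|_R$ because $f_i[R]\subseteq S$ and $q|_S$ is injective, and join-density of $R$ in $K$ forces $f_1 = f_2$. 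Thus the forward implication, as stated, appears to fail; what the ``standard arguments'' (via the faithful right adjoint $U:\ffrm\to\dlat$ of Proposition~\ref{p:17}, or equivalently via your probe $(F,F)$) actually give is that $m$ is a monomorphism if and only if the restriction $m|_S: S \to T$ is injective.
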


\begin{proposition}\label{p:21}
  A morphism $e:(L, S)\ra (M, T)$ of Frith frames is an extremal
  epimorphism if and only if it satisfies $e[S] = T$. In particular,
  all extremal epimorphisms are surjective.
\end{proposition}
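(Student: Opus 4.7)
The plan is to prove both implications directly, constructing a canonical factorization through a subframe in the forward direction and reducing extremality to the known frame-theoretic facts in the converse.

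For the forward direction, assume $e:(L,S)\to(M,T)$ is an extremal epimorphism. I would set $T':=e[S]$, which is a bounded sublattice of $M$ because $e$ is a (lattice) homomorphism preserving $0$ and $1$, and let $N\subseteq M$ be the set of all joins of subsets of $T'$. Distributivity of $M$ together with the closure of $T'$ under finite meets shows that $N$ is closed under arbitrary joins and finite meets, hence is a subframe of $M$ in which $T'$ is join-dense; thus $(N,T')$ is a Frith frame. Since $S$ join-generates $L$, every $e(a)$ is a join of elements of $T'$, so $e$ corestricts to a morphism $e':(L,S)\to(N,T')$ of Frith frames, and $e=j\circ e'$ where $j:(N,T')\hookrightarrow(M,T)$ is the subframe inclusion (well-defined as a Frith frame morphism because $T'=e[S]\subseteq T$). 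By Lemma~\ref{anothercoref}, $j$ is a monomorphism in $\ffrm$, so extremality of $e$ forces $j$ to be an isomorphism of Frith frames. The existence of $j^{-1}$ as a Frith frame morphism yields $T\subseteq T'$, which combined with $T'\subseteq T$ gives $e[S]=T$.

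For the converse, suppose $e[S]=T$. Then $e$ is surjective on frames: any $m\in M$ can be written as $m=\bigvee_i t_i$ with $t_i\in T$, and each $t_i=e(s_i)$ for some $s_i\in S$, so $m=e(\bigvee_i s_i)$. This observation also proves the ``in particular'' clause. Surjective frame maps are epimorphisms in $\Frm$, and since two morphisms of Frith frames coincide precisely when their underlying frame maps do, $e$ is an epimorphism in $\ffrm$. To verify extremality, consider a factorization $e=m\circ g$ with $m:(N,T'')\to(M,T)$ a monomorphism in $\ffrm$. By Lemma~\ref{anothercoref}, $m$ is injective; since $m\circ g=e$ is surjective on frames, $m$ is also surjective on frames, hence a frame isomorphism. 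It remains to show $m^{-1}[T]\subseteq T''$: given $t\in T=e[S]$, write $t=e(s)=m(g(s))$ with $s\in S$, so $m^{-1}(t)=g(s)\in T''$ because $g$ is a morphism of Frith frames. Thus $m$ is an isomorphism in $\ffrm$.

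The only step that requires any real thought is the construction of the canonical factorization $(L,S)\to(N,T')\hookrightarrow(M,T)$ in the forward direction; once this is in place, both directions follow mechanically from Lemma~\ref{anothercoref} and standard properties of frame surjections.
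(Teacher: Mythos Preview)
Your proof is correct and follows precisely the standard argument the paper has in mind (the paper omits the proof, deferring to ``standard arguments''): the forward direction via the canonical factorization through the image Frith frame $(N,e[S])$ is exactly the dual of the paper's explicit proof of Proposition~\ref{p:20} for Pervin spaces, and the converse is the routine verification using Lemma~\ref{anothercoref} and Corollary~\ref{p:7}.
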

We will say that $(M, T)$ is a \emph{quotient} of $(L, S)$ if there is
an extremal epimorphism $e:(L, S)\twoheadrightarrow (M, T)$. Notice
that quotients of $(L, S)$ are, up to isomorphism, in a one-to-one
correspondence with the congruences on~$L$. The characterization of
Proposition~\ref{p:21}, together with that of the extremal
monomorphisms in $\perv$ made in Proposition~\ref{p:20}, allows us to
conclude that the extremal epimorphisms in \ffrm{} are indeed the
pointfree version of \emph{embeddings of Pervin spaces}. We will say
that a Pervin space is $T_0$ provided so is the topological space it
defines.
\begin{corollary}\label{c:6}
  Let $(X, \cS)$ and $(Y, \cT)$ be Pervin spaces. If
  $m:(X,\ca{S})\hookrightarrow (Y,\ca{T})$ is an extremal monomorphism
  in \perv{} then $\Om(m): (\Omega_\cT(Y), \cT) \twoheadrightarrow
  (\Omega_\cS(X), \cS)$ is an extremal epimorphism in \ffrm{}. The
  converse holds provided $(X, \cS)$ is $T_0$.
\end{corollary}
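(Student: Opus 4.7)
The plan is to reduce both directions to the combinatorial characterizations already available: extremal monomorphisms of Pervin spaces (Proposition~\ref{p:20}) and extremal epimorphisms of Frith frames (Proposition~\ref{p:21}). Throughout, note that $\Omega(m) = m^{-1}$ as a map of underlying posets, so $\Omega(m)[\cT] = m^{-1}[\cT]$.

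For the forward direction, suppose $m:(X,\ca{S})\hookrightarrow (Y,\ca{T})$ is an extremal monomorphism in \perv{}. By Proposition~\ref{p:20}, $m$ is injective and every $S\in\cS$ equals $m^{-1}(T)$ for some $T\in\cT$, so $m^{-1}[\cT]=\cS$ (the other inclusion is just the fact that $m$ is a Pervin morphism). Then $\Omega(m)$ is a well-defined morphism of Frith frames and satisfies $\Omega(m)[\cT]=\cS$, so by Proposition~\ref{p:21} it is an extremal epimorphism in \ffrm{}.

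For the converse, assume $(X,\cS)$ is $T_0$ and that $\Omega(m)$ is an extremal epimorphism in \ffrm{}. By Proposition~\ref{p:21}, $m^{-1}[\cT]=\cS$; in view of Proposition~\ref{p:20}, it remains only to check that $m$ itself is injective. I would argue by contradiction: if $x_1\neq x_2$ in $X$, the $T_0$ hypothesis on $(X,\Omega_\cS(X))$ produces an open set separating them, and since $\Omega_\cS(X)$ is generated (under unions) by the sublattice $\cS$, one of the generators $S\in\cS$ already separates $x_1$ from $x_2$. Writing $S=m^{-1}(T)$ for some $T\in\cT$ (possible because $\cS=m^{-1}[\cT]$), one concludes that $T$ separates $m(x_1)$ from $m(x_2)$, whence $m(x_1)\neq m(x_2)$. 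Thus $m$ is injective and Proposition~\ref{p:20} yields that $m$ is an extremal monomorphism in \perv{}.

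The only mild subtlety is the last separation argument: it uses in an essential way that $\cS$ is closed under finite meets (so that elements of $\Omega_\cS(X)$ are genuine unions of members of $\cS$), and that the $T_0$ assumption is phrased in terms of the generated topology rather than of $\cS$ directly. Everything else is a direct translation between the two characterizations.
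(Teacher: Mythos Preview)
Your proof is correct and follows essentially the same approach as the paper: both directions reduce to the characterizations in Propositions~\ref{p:20} and~\ref{p:21}, with the $T_0$ hypothesis used to establish injectivity of~$m$ in the converse. The only cosmetic difference is that the paper lifts an arbitrary separating open $U\in\Omega_\cS(X)$ directly via the frame surjectivity of~$\Omega(m)$ (the ``in particular'' clause of Proposition~\ref{p:21}), whereas you first refine $U$ to a basic open $S\in\cS$ and lift that via $\cS=m^{-1}[\cT]$; both are equally valid.
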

\begin{proof}
  By Propositions~\ref{p:20} and~\ref{p:21}, the only non-trivial part
  is to show that if $\Omega(m)$ is an extremal epimorphism then $m$
  is injective. Let $x_1, x_2 \in X$ be two distinct points. Since
  $(X, \cS)$ is $T_0$, there exists some $U \in \Omega_\cS(X)$ such
  that $x_1 \in U$ and $x_2 \notin U$. By Proposition~\ref{p:21},
  there exists some $V \in \Omega_\cT(Y)$ such that $\Omega(m)(V) =
  m^{-1}(V) = U$. But then, $m(x_1) \in V$ and $m(x_2) \notin V$ and
  thus, $m(x_1) \neq m(x_2)$ as we intended to show.
\end{proof}

Since every morphism which is both a monomorphism and an extremal
epimorphism is an isomorphism
(cf. \cite[Proposition~4.3.7]{Borceux94-vol1}), we may also conclude
the following:
\begin{corollary}\label{p:7}
  Let $h: (L, S) \to (M, T)$ be a morphism of Frith frames. Then, $h$
  is an isomorphism if and only if $h$ is one-to-one and satisfies $h[S]
  = T$.
\end{corollary}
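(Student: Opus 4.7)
The plan is to combine the two characterizations immediately preceding the corollary with the general categorical fact cited just before the statement, namely that any morphism which is simultaneously a monomorphism and an extremal epimorphism is an isomorphism.

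For the forward direction, suppose $h$ is an isomorphism. Then in particular $h$ is a monomorphism, so by Lemma~\ref{anothercoref} the underlying map $h:L\to M$ is injective. Moreover, $h$ is an extremal epimorphism (every isomorphism is), so by Proposition~\ref{p:21} we obtain $h[S] = T$.

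For the converse, assume $h$ is one-to-one and $h[S] = T$. The first condition gives, via Lemma~\ref{anothercoref}, that $h$ is a monomorphism in \ffrm{}. The second gives, via Proposition~\ref{p:21}, that $h$ is an extremal epimorphism. Invoking \cite[Proposition~4.3.7]{Borceux94-vol1} (a morphism which is both a monomorphism and an extremal epimorphism is an isomorphism) yields that $h$ is an isomorphism.

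There is essentially no obstacle here: the statement is a direct corollary of Lemma~\ref{anothercoref} and Proposition~\ref{p:21} once the general categorical principle is in hand, which is exactly how the authors frame it in the sentence preceding the corollary. The only mild subtlety worth flagging is that the inverse of the frame isomorphism $h^{-1}:M\to L$ automatically lifts to a morphism of Frith frames, because $h[S] = T$ is equivalent (given injectivity of $h$) to $h^{-1}[T] \subseteq S$; but this is absorbed into the categorical statement and needs no separate treatment.
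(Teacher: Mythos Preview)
Your proof is correct and follows exactly the approach the paper intends: the corollary is stated immediately after the sentence invoking \cite[Proposition~4.3.7]{Borceux94-vol1}, and your argument simply unpacks that sentence using Lemma~\ref{anothercoref} and Proposition~\ref{p:21}. The extra remark about $h^{-1}$ being a Frith morphism is a fine sanity check but, as you note, is already subsumed by the categorical statement.
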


We finish this section by exploring the relationship between extremal
and regular epimorphisms. Recall that every regular epimorphism is
extremal (cf. \cite[Proposition~4.3.3]{Borceux94-vol1}). In order to
state the conditions under which the converse holds, we will need the
following notion of \emph{Frith congruence}:

\begin{definition}
  A \emph{Frith congruence} on a Frith frame $(L,S)$ is a frame
  congruence on $L$ generated by a relation $\rho\se S\times
  S$. 
\end{definition}

\begin{lemma}\label{rho}
  Let $(L,S)$ be a Frith frame. Then, a congruence on $L$ is a Frith
  congruence if and only if it is generated by its restriction to
  $S\times S$.
\end{lemma}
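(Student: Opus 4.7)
The plan is to unwind the two directions from the definitions, relying only on the fact that generating a congruence is a closure operator on the poset of binary relations on $L$ (monotone, extensive, and idempotent), as recalled in the preliminaries.

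First, for the easy direction: if $\theta$ is generated by its restriction $\theta \cap (S\times S)$, then since $\theta \cap (S\times S)$ is by definition a subset of $S\times S$, the congruence $\theta = \overline{\theta \cap (S\times S)}$ fits the definition of a Frith congruence (taking $\rho := \theta \cap (S\times S)$).

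For the converse, I would start with $\theta = \overline{\rho}$ for some $\rho \subseteq S\times S$. The key observation is the chain of inclusions
\[\rho \;\subseteq\; \theta \cap (S\times S) \;\subseteq\; \theta,\]
which holds because $\rho \subseteq S\times S$ and $\rho \subseteq \overline{\rho} = \theta$. Applying the closure operator $\overline{\,\cdot\,}$, which is monotone and fixes $\theta$, yields
\[\theta = \overline{\rho} \;\subseteq\; \overline{\theta \cap (S\times S)} \;\subseteq\; \overline{\theta} = \theta,\]
and hence $\theta = \overline{\theta \cap (S\times S)}$, as required.

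There is no real obstacle here; the statement is essentially an unfolding of the definition once one notes that any relation sandwiched between $\rho$ and $\overline{\rho}$ generates the same congruence. The join-density of $S$ in $L$, although part of the Frith frame structure, is not used in the argument.
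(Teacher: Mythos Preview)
Your proof is correct and follows essentially the same approach as the paper's: both directions rely only on the monotonicity and idempotency of the closure operator $\overline{\,\cdot\,}$, using the sandwich $\rho \subseteq \theta \cap (S\times S) \subseteq \theta$ to force $\overline{\theta \cap (S\times S)} = \theta$. Your remark that the join-density of $S$ plays no role is accurate and worth noting.
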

\begin{proof}
  The backwards direction of the implication is trivial. For the
  converse, suppose that $\theta$ is a Frith congruence. Let $\rho\se
  S\times S$ be such that $\overline{\rho}=\theta$. We have that
  $\theta\cap (S\times S)\se \theta$ and since closure operators are
  monotone and idempotent this implies $\overline{\theta\cap (S\times
    S)}\se \theta$. For the reverse inclusion, since $\theta$ extends
  $\rho$ we have $\rho\se \theta\cap (S\times S)$, and since closures
  are monotone we have $\theta=\overline{\rho}\se \overline{\theta\cap
    (S\times S)}$.
\end{proof}

In the example below we show that not every congruence on $L$ is a
Frith congruence.
\begin{example}\label{sec:5}
  Let $L$ be the frame whose underlying poset is the ordinal
  $\omega+2$, and let $S$ be the sublattice $L \backslash \{\omega\}$
  of $L$. Since $\omega = \bigvee\{n \mid n \in \omega\}$ is the join
  of elements in $S$, the pair $(L, S)$ is a Frith frame. Then, the
  open congruence $\del_{\omega}$ is not a Frith congruence. Indeed,
  since $\del_{\omega} = \{(x,y) \in L \times L \mid x \wedge \omega =
  y \wedge \omega\} = \{(x, y) \in L \times L \mid x = y \text{ or }
  x, y \geq \omega\}$ does not identify any two distinct elements
  of~$S$, if it were a Frith congruence and thus generated by its
  restriction to $S \times S$, then it had to be the identity. But
  that is not the case as it contains, for instance, the element
  $(\omega, \omega + 1)$.
\end{example}

In fact, we have the following:

\begin{proposition}\label{p:13}
  Let $(L, S)$ be a Frith frame and $\theta \subseteq L \times L$ be a
  congruence on $L$. Then, $\theta$ is a Frith congruence if and only
  if it belongs to $\cC_SL$.
\end{proposition}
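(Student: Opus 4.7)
The argument rests on the following auxiliary identity, which I would establish first as a lemma: for any $p, q \in L$, the principal frame congruence $\langle (p,q) \rangle$ equals $\nabla_{p \vee q} \cap \Delta_{p \wedge q}$. The inclusion $\subseteq$ is immediate because $(p,q)$ lies in the right-hand side (direct check using the definitions of $\nabla$ and $\Delta$). For the reverse inclusion, pass to the quotient $L/\langle (p,q) \rangle$: there $[p] = [q]$, so $c := [p \vee q] = [p \wedge q]$, and projecting the defining relations of $\nabla_{p \vee q} \cap \Delta_{p \wedge q}$ yields, for any $(x,y)$ in the right-hand side, that $[x] \vee c = [y] \vee c$ and $[x] \wedge c = [y] \wedge c$ in the quotient, which forces $[x] = [y]$ by the standard distributive-lattice cancellation. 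Throughout I will also use freely the identities from Lemma~\ref{basic} (so that finite meets of $\nabla$s combine into a single $\nabla$ and finite meets of $\Delta$s into a single $\Delta$), and the complementation $\nabla_s \cap \Delta_s = \id_L$, $\nabla_s \vee \Delta_s = L \times L$ in $\cC L$.

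For the forward direction, suppose $\theta = \overline{\rho}$ with $\rho \subseteq S \times S$. Then
\[\theta \;=\; \bigvee_{(s,t) \in \rho} \langle (s,t) \rangle \;=\; \bigvee_{(s,t) \in \rho}\bigl(\nabla_{s \vee t} \cap \Delta_{s \wedge t}\bigr).\]
Since $S$ is a sublattice of $L$, both $s \vee t$ and $s \wedge t$ lie in $S$, so each summand already belongs to $\cC_S L$; hence so does $\theta$.

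For the reverse direction, let $\theta \in \cC_S L$. Since $\cC_S L$ is the subframe of $\cC L$ generated by $\{\nabla_a \mid a \in L\} \cup \{\Delta_s \mid s \in S\}$, by Lemma~\ref{basic} every element can be written as $\theta = \bigvee_i (\nabla_{a_i} \cap \Delta_{s_i})$ with $a_i \in L$ and $s_i \in S$. The class of Frith congruences is visibly closed under joins (a union of relations in $S \times S$ still lies in $S \times S$), so it is enough to show that each $\nabla_a \cap \Delta_s$, with $a \in L$ and $s \in S$, is a Frith congruence. The crucial step is the identity
\[\nabla_a \cap \Delta_s \;=\; \nabla_{a \vee s} \cap \Delta_s,\]
whose nontrivial inclusion is proved by observing that, if $(x,y)$ lies in the right-hand side, then $(x \vee a, y \vee a) \in \nabla_s \cap \Delta_s = \id_L$, hence $x \vee a = y \vee a$. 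By the auxiliary identity, $\nabla_{a \vee s} \cap \Delta_s = \langle (s, a \vee s) \rangle$. Finally, using join-density of $S$ in $L$, write $a \vee s = \bigvee_j (u_j \vee s)$ with $u_j \in S$ (so that each $u_j \vee s$ lies in $S$); distributing over meets in the frame $\cC L$ gives
\[\langle (s, a \vee s) \rangle \;=\; \Bigl(\bigvee_j \nabla_{u_j \vee s}\Bigr) \cap \Delta_s \;=\; \bigvee_j \langle (s, u_j \vee s) \rangle,\]
a join of principal congruences generated by pairs in $S \times S$, and thus a Frith congruence.

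The main obstacle is this last reduction to principal Frith congruences; everything ultimately turns on the identity $\nabla_a \cap \Delta_s = \nabla_{a \vee s} \cap \Delta_s$, which is precisely where the Boolean-like complementation of $\nabla_s$ and $\Delta_s$ inside $\cC L$ is used, together with join-density of $S$ to push the index of the $\nabla$ back into $S$.
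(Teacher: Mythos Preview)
Your proof is correct and uses the same ingredients as the paper: the identity $\langle(p,q)\rangle=\nabla_{p\vee q}\cap\Delta_{p\wedge q}$ together with join-density of~$S$ in~$L$. The paper's version is slightly more economical because it applies join-density at the outset to note that $\cC_SL$ is already generated by $\{\nabla_s,\Delta_s\mid s\in S\}$ (since $\nabla_a=\bigvee\{\nabla_s\mid s\in S,\ s\leq a\}$), so in the reverse direction one may take your $a_i$ in $S$ from the start and bypass the identity $\nabla_a\cap\Delta_s=\nabla_{a\vee s}\cap\Delta_s$ and the final distributivity step.
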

\begin{proof}
  Note that the congruence generated by a relation $\rho \subseteq L
  \times L$ is $\bigvee \{\nabla_a \wedge \Delta_b \mid (a, b) \in
  \rho\}$. Thus, by definition of Frith congruence, it suffices to
  observe that the fact that $\nabla$ preserves arbitrary joins and
  $S$ is join-dense in $L$ implies that $\cC_SL$ is generated by the
  set $\{\nabla_s,\ \Delta_s \mid s \in S\}$.
\end{proof}

We may now characterize those extremal epimorphisms that are regular.

\begin{proposition}\label{regularepi2}
  Let $q: (L, S) \twoheadrightarrow (M, T)$ be an extremal epimorphism
  of Frith frames. Then, $q$ is a regular epimorphism if and only if
  $\ker (q)$ is a Frith congruence.
\end{proposition}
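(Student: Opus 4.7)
The plan is to use the explicit description of coequalizers in $\ffrm$ given by Proposition~\ref{coeq}: the coequalizer of a parallel pair $h_1,h_2:(L',S')\rightrightarrows (L,S)$ is the quotient of $(L,S)$ by the frame congruence on $L$ generated by $\{(h_1(x),h_2(x))\mid x\in L'\}$. The key reduction, used in both directions, is that since $S'$ is join-dense in $L'$ and frame congruences are closed under componentwise arbitrary joins, this congruence coincides with the one generated by the \emph{smaller} set $\{(h_1(s),h_2(s))\mid s\in S'\}$.

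For the forward implication, suppose $q$ is the coequalizer of some pair $h_1,h_2:(L',S')\rightrightarrows (L,S)$ in $\ffrm$. Then $\ker(q)$ is the congruence generated by $\{(h_1(x),h_2(x))\mid x\in L'\}$. By the reduction above, it is already generated by $\{(h_1(s),h_2(s))\mid s\in S'\}$, and because $h_1,h_2$ are Frith morphisms one has $h_i(S')\subseteq S$, so this generating set lies in $S\times S$. Hence $\ker(q)$ is a Frith congruence.

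For the converse, assume $\ker(q)$ is generated by some relation $\rho=\{(s_\alpha,t_\alpha)\mid \alpha\in A\}\subseteq S\times S$. I would build an explicit parallel pair that exhibits $q$ as a coequalizer. Let $D$ be the free bounded distributive lattice on generators $\{c_\alpha\mid\alpha\in A\}$ and consider the Frith frame $(\idl(D),D)$ from Proposition~\ref{p:17}. By the adjunction $\idl(-)\dashv U$, the two set maps $c_\alpha\mapsto s_\alpha$ and $c_\alpha\mapsto t_\alpha$ determine Frith morphisms $h_1,h_2:(\idl(D),D)\to (L,S)$. By Proposition~\ref{coeq}, their coequalizer is the quotient of $(L,S)$ by the frame congruence $\theta'$ generated by $\{(h_1(x),h_2(x))\mid x\in \idl(D)\}$, which by the reduction above is generated by $\{(h_1(c),h_2(c))\mid c\in D\}$. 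The inclusion $\ker(q)\subseteq\theta'$ is immediate from $(h_1(c_\alpha),h_2(c_\alpha))=(s_\alpha,t_\alpha)\in\rho$. For the reverse inclusion, I would prove by induction on the structure of $c\in D$ that $(h_1(c),h_2(c))\in\ker(q)$: the generators $c_\alpha$ give pairs in $\rho$, and the inductive step for $\wedge$ and $\vee$ follows from closure of frame congruences under finite meets and joins taken componentwise. This yields $\theta'=\ker(q)$, so $q$ is (up to isomorphism) the coequalizer of $h_1,h_2$ and therefore a regular epimorphism.

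The only mildly delicate point is the reduction asserting that a frame congruence generated by $\{(h_1(x),h_2(x))\mid x\in L'\}$ is already generated by its restriction to the join-dense sublattice $S'$; this crucially uses that $(a_i,b_i)\in\theta$ for all $i\in I$ implies $(\bigvee_i a_i,\bigvee_i b_i)\in\theta$, a defining property of frame congruences. Everything else is a routine unwinding of the coequalizer description and of the ideal completion adjunction.
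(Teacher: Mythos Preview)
Your proof is correct. The forward direction is essentially identical to the paper's. For the converse, however, you and the paper take genuinely different routes.

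The paper constructs the parallel pair internally from the kernel itself: it sets $R := \ker(q)\cap(S\times S)$, lets $K$ be the subframe of $L\times L$ generated by $R$, and uses the two projections $\pi_1,\pi_2:(K,R)\to(L,S)$. The relation $\rho=\{(\pi_1(x,y),\pi_2(x,y))\mid(x,y)\in K\}$ is simply $K$ itself, and since $R$ generates $K$ as a frame, $\rho$ and $R$ generate the same congruence. Lemma~\ref{rho} then gives $\ker(q)=\overline{R}=\overline{\rho}$ directly, with no induction needed.

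Your approach instead imports a free object: the Frith frame $(\idl(D),D)$ with $D$ free on an indexing set for $\rho$, and two morphisms picking out the components of $\rho$. This is equally valid, but it trades the appeal to Lemma~\ref{rho} for a structural induction over elements of $D$ and the machinery of Proposition~\ref{p:17}. The paper's kernel-pair style construction is slightly more economical and self-contained (no free distributive lattices, no adjunction), while your free-object approach has the virtue of being the ``obvious'' thing to try when one wants to realise a prescribed generating relation as the image of a parallel pair. Both arguments rest on the same key reduction you identify: that the congruence generated by $\{(h_1(x),h_2(x))\mid x\in L'\}$ is already generated by its restriction to a join-dense sublattice.
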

\begin{proof}
  Suppose that $q: (L, S) \twoheadrightarrow (M, T)$ is a regular
  epimorphism, let us say that $q$ is the coequalizer of $h_1, h_2:
  (K, R) \to (L, S)$. It follows from Proposition~\ref{coeq} that
  $\ker(q)$ is the congruence generated by the subframe $\{(h_1(a),
  h_2(a)) \mid a \in K\}$ of $L \times L$.  Since $R$ is join dense in
  $K$, this subframe is generated by $\{(h_1(r), h_2(r)) \mid r \in
  R\}$ and thus, $\ker(q)$ is generated by $\{(h_1(r),h_2(r))\mid r\in
  R\}$.  Since $h_1$ and $h_2$ are morphisms of Frith frames, the set
  $\{(h_1(r), h_2(r)) \mid r \in R\}$ is a relation on~$S$. Thus,
  $\ker(q)$ is a Frith congruence.

  Conversely, suppose that $\ker (q)$ is a Frith congruence, and let
  $K$ be the subframe of $L \times L$ generated by $R:= \ker(q) \cap
  (S \times S)$. Clearly, the pair $(K, R)$ is a Frith frame, and the
  two projection maps $K \to L$ induce morphisms of Frith frames
  $\pi_1,\pi_2:(K, R)\to (L,S)$.  We claim that $q$ is the coequalizer
  of~$\pi_1$ and~$\pi_2$. By Proposition~\ref{coeq} and using the fact
  that $q$ is an extremal epimorphism, it suffices to show that
  $\ker(q) = \overline{\rho}$, where $\rho:=
  \{(\pi_1(x,y),\pi_2(x,y))\mid(x,y)\in K\}$.  We observe that $\rho =
  K$ and, since $R$ generates $K$ as a frame, it follows that $\rho$
  and $R$ generate the same congruence on~$L$. Finally, since
  $\ker(q)$ is a Frith congruence, by Lemma~\ref{rho}, we may then
  conclude that
  \[\ker(q)
    = \overline{\ker(q) \cap (S \times S)} = \overline{R} =
    \overline{\rho},\] as required.
\end{proof}

A \emph{Frith quotient} of $(L, S)$ is then a Frith frame $(M, T)$ for
which there is a regular epimorphism $q: (L, S) \twoheadrightarrow (M,
T)$. By Propositions~\ref{p:13} and~\ref{regularepi2}, there is a
bijection between Frith quotients of $(L,S)$, up to isomorphism, and
the congruences of $\cC_SL$. We finally show that an analogue of
Corollary~\ref{c:6} does not hold with respect to regular morphisms,
even if we restrict to sober spaces.

\begin{example}\label{sec:6} Consider the set
  $X := \omega+1$, equipped with the lattice~$\cS \subseteq \cP(X)$
  consisting of the downsets of~$X$. Since the
  topology~$\Omega_\cS(X)$ on~$X$ has as open subsets the elements of
  $\cS$ together with $\omega$, we have $\Omega(X, \cS) = (\omega + 2,
  (\omega+2){\setminus}\{\omega\})$. We let $(Y, \cT)$ be the Pervin
  subspace of $(X, \cS)$ defined by the subset $Y:= \omega \subseteq
  X$ and we let $m: (Y, \cT) \hookrightarrow (X, \cS)$ be the
  corresponding subspace embedding. Then,
  \begin{align*}
    \ker(\Omega(m)) & = \{(U_1,U_2) \mid U_1, U_2 \in \Omega_\cS(X), \
                      m^{-1}(U_1) = m^{-1}(U_2)\}
    \\ & = \{(x, y) \mid x,y \in \omega + 2, \ x
         \wedge \omega = y \wedge \omega\} = \Delta_\omega
    \end{align*}
    is the congruence described in Example~\ref{sec:5}, which is not a
    Frith congruence. Therefore, by Proposition~\ref{regularepi2},
    $\Omega(m)$ is not a regular epimorphism. Finally, we argue that
    the topological space $(X, \Omega_\cS(X))$ defined by $(X, \cS)$
    is sober. For a subset $Q \subseteq X$, we let $p_Q: \omega +2 \to
    \two$ be the unique function satisfying $p_Q^{-1}(1) =
    Q$. Clearly, if $p_Q$ is a point of $\omega + 2$, that is, a frame
    homomorphism, then $Q$ must be an upset. Also, it is not hard to
    verify that, among the upsets of $\omega+2$, all but
    $\{\omega,\omega+1\}$ give rise to a point
    ($p_{\{\omega,\omega+1\}}$ is not a point because $\omega
    =\bve_{n\in \omega}n$). Therefore, the points of $\omega+2$ may be
    identified with the elements of $\omega+1$, via the correspondence
    $p_{\up n}\mapsto n$ and $p_{\{\omega+2\}}\mapsto \omega$. Under
    this correspondence the open subsets of $\pt(\omega +2)$ are
    precisely the downsets of $\omega+1$, that is, $(X,
    \Omega_\cS(X))$ is isomorphic to the space $\pt(\omega +2)$ and
    thus, it is sober.
\end{example}

\subsection{Frith quotients and the $T_D$ axiom for Pervin
  spaces}\label{sec:td}

Recall that a topological space $X$ is \emph{$T_D$} if for every $x
\in X$ there exists an open subset $U\subseteq X$ containing $x$ such
that $U\backslash\{x\}$ is open.  In the classical topological
setting, every subspace of~$X$ induces, via $\Omega$, a quotient of
$\Omega(X)$, and we have that $X$ is $T_D$ if and only if
\emph{different subspaces} of~$X$ induce \emph{different quotients} of
$\Om(X)$.  For Pervin spaces, this may be translated as follows: by
Corollary~\ref{c:6}, every subspace of a Pervin space $(X, \cS)$
induces, via $\Omega$, a quotient of $(\Omega_\cS(X), \cS)$, and we
have that \emph{different subspaces} of $(X, \cS)$ induce
\emph{different quotients} on $(\Omega_\cS(X), \cS)$ if and only if
the topology $\Omega_\cS(X)$ on $X$ is $T_D$.  In this section we will
state and prove a version of this result where \emph{quotient} is
replaced by \emph{Frith quotient}. Actually, we will show a version of
the following:
\begin{proposition}[{\cite[Chapter I, Section
    1]{picadopultr2011frames}}]
  Let $X$ be a topological space. The following are equivalent:
  \begin{enumerate}
  \item The space $X$ is $T_D$.
  \item Different subspaces of $X$ induce different quotients of
    $\Om(X)$.
  \item For no $x\in X$ does the subspace inclusion $X{\sm}\{x\}\inclu X$ induce an isomorphism $\Omega(X{\sm}\{x\})\cong\Omega(X)$.
  \item The Skula topology on $X$ is discrete.
  \end{enumerate}
\end{proposition}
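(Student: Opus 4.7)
The plan is to prove the chain of equivalences (1)$\Leftrightarrow$(3), (1)$\Leftrightarrow$(4), (1)$\Rightarrow$(2), and (2)$\Rightarrow$(3). The driving observation is that, for each $x \in X$, the subspace inclusion $i: X{\sm}\{x\} \hookrightarrow X$ induces the frame homomorphism $i^{-1}: \Omega(X) \to \Omega(X{\sm}\{x\})$ sending $U$ to $U{\sm}\{x\}$, which is \emph{always} surjective by definition of the subspace topology. Hence $i^{-1}$ is an isomorphism precisely when it is injective, and $i^{-1}$ fails to be injective precisely when there exist $U, V \in \Omega(X)$ with $U \neq V$ but $U {\sm}\{x\} = V{\sm}\{x\}$; unpacking this amounts to the existence of an open $U \ni x$ with $U{\sm}\{x\}$ open, i.e., the $T_D$-condition at~$x$. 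This gives (1)$\Leftrightarrow$(3) immediately.

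For (1)$\Leftrightarrow$(4), I would unfold that the Skula topology is generated by $\tau$ together with the complements of its elements, so its basic opens are sets of the form $U \cap (X{\sm}V)$ with $U, V \in \tau$. In particular, $\{x\}$ is Skula open if and only if $\{x\} = U \cap (X{\sm}V)$ for some opens $U,V$, which, after observing that $x \in U$ and $V \cap U = U {\sm} \{x\}$, is equivalent to the existence of an open $U \ni x$ such that $U{\sm}\{x\}$ is open. Hence every singleton is Skula open iff $X$ is $T_D$; as arbitrary unions of singletons are Skula open, this is equivalent to the Skula topology being discrete.

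For (1)$\Rightarrow$(2), I would argue directly on kernels of the induced quotient maps. Given distinct subspaces $Y_1, Y_2 \se X$, choose $x \in Y_1 \triangle Y_2$, say $x \in Y_1 {\sm} Y_2$, and use $T_D$ at $x$ to pick an open $U \ni x$ with $U{\sm}\{x\}$ open. The kernel of $\Omega(X) \twoheadrightarrow \Omega(Y_j)$ is $\{(V,W) \mid V \cap Y_j = W \cap Y_j\}$; the pair $(U, U{\sm}\{x\})$ lies in this kernel for $j = 2$ (since $x \notin Y_2$) but not for $j = 1$ (since $x \in U \cap Y_1$ and $x \notin (U{\sm}\{x\}) \cap Y_1$). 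So the two quotient congruences differ. Finally, (2)$\Rightarrow$(3) is an immediate contrapositive: if the inclusion $X{\sm}\{x\} \hookrightarrow X$ induced an isomorphism on frames of opens, then the two distinct subspaces $X$ and $X{\sm}\{x\}$ would produce the same quotient of $\Omega(X)$ (up to isomorphism over $\Omega(X)$).

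The main obstacle is small: it lies in verifying carefully in the implication (1)$\Rightarrow$(2) that the chosen pair $(U, U{\sm}\{x\})$ really separates the two kernel congruences, since this requires the $T_D$-witness to be picked at a point distinguishing $Y_1$ from $Y_2$. Everything else is a matter of unwinding the definitions of Skula topology, subspace topology, and kernel congruence.
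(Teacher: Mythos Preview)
The paper does not supply its own proof of this proposition: it is quoted verbatim from \cite[Chapter~I, Section~1]{picadopultr2011frames} and serves only as motivation for the Pervin-space analogue that follows. So there is no in-paper proof to compare against directly.

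That said, your argument is correct. Each of the four blocks goes through: the equivalence (1)$\Leftrightarrow$(3) rests exactly on the observation that $i^{-1}:\Omega(X)\to\Omega(X{\sm}\{x\})$ is always onto and fails to be injective precisely when the $T_D$-condition holds at~$x$; the Skula computation in (1)$\Leftrightarrow$(4) is handled correctly (your identity $U\cap V=U{\sm}\{x\}$ is what one needs); and (1)$\Rightarrow$(2) and (2)$\Rightarrow$(3) are clean kernel-congruence arguments.

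For context, the paper does prove the Pervin-space analogue (the proposition immediately following), and there it runs the implications as a single cycle \ref{item:7}$\Rightarrow$\ref{item:8}$\Rightarrow$\ref{item:13}$\Rightarrow$\ref{item:9}$\Rightarrow$\ref{item:7}, isolating the step \ref{item:7}$\Rightarrow$\ref{item:8} as a separate lemma (Lemma~\ref{ptd}) and using Lemma~\ref{l:12} for \ref{item:13}$\Rightarrow$\ref{item:9}. Your decomposition is different---you pivot everything through (1) rather than cycling---but the underlying computations are the same, and your (1)$\Rightarrow$(2) is essentially the classical-topology specialization of Lemma~\ref{ptd}. Neither approach has an advantage here; both reduce to the same elementary manipulations of open sets.
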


We have seen in the previous section that, unlike what happens for
frames, the extremal epimorphisms in \ffrm{} do not coincide with the
regular ones (cf. Proposition~\ref{regularepi2}), and
Example~\ref{sec:6} shows that, in general, the functor $\Omega$ does
not map subspaces of a Pervin space $(X, \cS)$ to Frith quotients of
$(\Omega_\cS(X), \cS)$. There is however a natural way to assign to
each subspace of $(X, \cS)$ a Frith quotient on $(\Omega_\cS(X),
\cS)$: given a subset $Y \subseteq X$, we consider the Frith quotient
defined by the Frith congruence $\theta_Y$ generated by
$\ker(\Omega(m)) \cap (\cS \times \cS) = \{(S_1, S_2) \in \cS \times
\cS\mid S_1 \cap Y = S_2 \cap Y\}$, where $m: (Y, \cT) \hookrightarrow
(X, \cS)$ is the subspace embedding determined by~$Y$.  Note that,
since $\ker(\Omega(m))$ is a frame congruence on $\Omega_\cS(X)$, we
have $\theta_Y \cap (\cS \times \cS) = \ker (\Omega(m)) \cap (\cS
\times \cS)$. The following property will be useful later:
\begin{lemma}
  \label{l:12}
  Let $(X, \cS)$ be a Pervin space and $x \in X$. If $\theta_X$ and
  $\theta_{X \backslash \{x\}}$ are distinct, then there are $S_1, S_2
  \in \cS$ such that $\{x\} = S_1 \backslash S_2$.
\end{lemma}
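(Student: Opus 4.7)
The plan is to unpack the definitions of $\theta_X$ and $\theta_{X \setminus \{x\}}$ and then rely on a direct set-theoretic argument. First I would observe that when $Y = X$ the subspace embedding $m$ is the identity, so the generating relation $\ker(\Omega(m)) \cap (\cS \times \cS)$ defining $\theta_X$ is just the diagonal of $\cS$. Hence $\theta_X$ is the identity congruence on $\Omega_\cS(X)$.

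Next, let $R := \{(T_1, T_2) \in \cS \times \cS \mid T_1 \setminus \{x\} = T_2 \setminus \{x\}\}$, which is the relation generating $\theta_{X \setminus \{x\}}$ as defined just before the statement of the lemma. Since $\theta_{X \setminus \{x\}} \neq \theta_X$ and $\theta_X$ is the identity, the congruence generated by $R$ is non-trivial. This forces $R$ itself to contain a non-diagonal pair, because the congruence generated by the diagonal alone is trivial. Thus there exist $S_1, S_2 \in \cS$ with $S_1 \neq S_2$ and $S_1 \setminus \{x\} = S_2 \setminus \{x\}$.

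The last step is purely set-theoretic: from $S_1 \setminus \{x\} = S_2 \setminus \{x\}$ and $S_1 \neq S_2$ it follows that $S_1 \bigtriangleup S_2 \subseteq \{x\}$ and is non-empty, so $S_1 \bigtriangleup S_2 = \{x\}$. Relabelling so that $x \in S_1$, one concludes $\{x\} = S_1 \setminus S_2$. The only place requiring a moment of care is the identification of $\theta_X$ with the identity congruence; after that the argument is immediate and no heavy machinery (not even Lemma~\ref{rho}) is needed beyond the fact that the congruence generated by a diagonal relation is trivial.
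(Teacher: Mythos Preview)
Your proof is correct and follows essentially the same approach as the paper's. Both identify $\theta_X$ with the identity congruence, deduce that the generating relation $R$ for $\theta_{X\setminus\{x\}}$ must contain a non-diagonal pair $(S_1,S_2)$, and then conclude $\{x\}=S_1\setminus S_2$ by an elementary set-theoretic argument; the only cosmetic difference is that the paper phrases the middle step via the fact that $\theta_{X\setminus\{x\}}$ is a Frith congruence (so its restriction to $\cS\times\cS$, which coincides with $R$, must be non-diagonal), whereas you argue directly that a diagonal generating relation yields the trivial congruence.
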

\begin{proof}
  Clearly, $\theta_X$ is the identity relation on
  $\Omega_\cS(X)$. Therefore, since $\theta_{X \backslash \{x\}}$ is,
  by definition, a Frith congruence, there exists some $(S_1, S_2) \in
  \theta_{X \backslash \{x\}} \cap (\cS \times \cS)$ such that $S_1
  \neq S_2$, say $S_1 \nsubseteq S_2$ without loss of generality. By
  definition of $\theta_{X \backslash \{x\}}$, we have $S_1 \backslash
  \{x\} = S_2 \backslash \{x\}$.  Thus, since $S_1 \nsubseteq S_2$, it
  follows that $\{x\} = S_1 \backslash S_2$ as required.
\end{proof}

We say that a Pervin space $(X,\ca{S})$ is \emph{Pervin-$T_D$} if for
every $x\in X$ there is some $S\in \ca{S}$ that contains $x$ and such
that $S \backslash \{x\} \in \cS$.
We first show that in a Pervin-$T_D$ space $(X, \cS)$ any two
different subspaces of $(X, \cS)$ induce different Frith quotients on
$(\Omega_\cS(X), \cS)$.

\begin{lemma}\label{ptd}
  If $(X,\ca{S})$ is a Pervin-$T_D$ space, then different subspaces of
  $(X, \cS)$ induce different Frith quotients on
  $(\Om_\cS(X),\ca{S})$.
\end{lemma}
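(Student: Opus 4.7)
The plan is to show the contrapositive-style statement directly: given two distinct subsets $Y_1,Y_2\se X$, exhibit a pair in $\cS\times\cS$ that lies in $\theta_{Y_2}$ but not in $\theta_{Y_1}$ (after, without loss of generality, picking a point in $Y_1\sm Y_2$). Since a Frith quotient is determined up to isomorphism by its Frith congruence, this will immediately give that the corresponding Frith quotients of $(\Om_\cS(X),\cS)$ are distinct.

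First I would unpack the relevant identity already observed before the lemma, namely
\[\theta_Y\cap (\cS\times \cS)=\ker(\Om(m_Y))\cap (\cS\times \cS)=\{(S_1,S_2)\in \cS\times \cS\mid S_1\cap Y=S_2\cap Y\},\]
where $m_Y:(Y,\cT_Y)\hookrightarrow (X,\cS)$ is the subspace embedding. The key consequence, valid because $\theta_Y\se \ker(\Om(m_Y))$, is that for a pair $(S_1,S_2)\in \cS\times \cS$ one has $(S_1,S_2)\in \theta_Y$ if and only if $S_1\cap Y=S_2\cap Y$.

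Now assume $Y_1\neq Y_2$ and, swapping if necessary, pick $x\in Y_1\sm Y_2$. By the Pervin-$T_D$ hypothesis, there is some $S\in \cS$ with $x\in S$ and $S\sm\{x\}\in \cS$. Since $x\notin Y_2$ we get $S\cap Y_2=(S\sm\{x\})\cap Y_2$, so by the identity above $(S,S\sm\{x\})\in \theta_{Y_2}$. On the other hand $x\in S\cap Y_1$ while $x\notin (S\sm\{x\})\cap Y_1$, so $S\cap Y_1\neq (S\sm\{x\})\cap Y_1$, whence $(S,S\sm\{x\})\notin \theta_{Y_1}$. Therefore $\theta_{Y_1}\neq \theta_{Y_2}$, and by Propositions~\ref{p:13} and~\ref{regularepi2} (giving the one-to-one correspondence between Frith congruences and Frith quotients, up to isomorphism) the induced Frith quotients of $(\Om_\cS(X),\cS)$ are distinct as well.

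There is no real obstacle: the whole argument is a one-line use of the Pervin-$T_D$ property, and the only care needed is to keep straight the distinction between a congruence and its restriction to $\cS\times\cS$, which is handled by the identity recalled in the second paragraph.
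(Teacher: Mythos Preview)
Your proof is correct and follows essentially the same approach as the paper's: pick a point in the symmetric difference of the two subsets, use the Pervin-$T_D$ hypothesis to produce $S\in\cS$ with $S\setminus\{x\}\in\cS$, and observe that the pair $(S,S\setminus\{x\})$ separates the two Frith congruences via the identity $\theta_Y\cap(\cS\times\cS)=\{(S_1,S_2)\in\cS\times\cS\mid S_1\cap Y=S_2\cap Y\}$. The only difference is cosmetic (your $Y_1,Y_2$ versus the paper's $Y,Z$, and your explicit invocation of the Frith-congruence/Frith-quotient bijection at the end).
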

\begin{proof}
  Let $(X,\ca{S})$ be a Pervin-$T_D$ space, and let $Y, Z \subseteq X$
  be two distinct subsets of~$X$. We need to show that $\theta_Y \neq
  \theta_Z$.  Without loss of generality, we may assume that there
  exists some $x \in Y \backslash Z$. Since $(X, \cS)$ is
  Pervin-$T_D$, we can take $S \in \ca{S}$ such that $x \in S$ and $S
  \backslash \{x\} \in \cS$. Then, as $x$ belongs to $Y$ but not to
  $Z$, we have
  \[S \cap Y \neq (S\backslash \{x\} ) \cap Y\quad \text{and}\quad S
    \cap Z = (S\backslash \{x\} ) \cap Z.\]
  Finally, using that $(S, S\backslash \{x\}) \in \cS \times \cS$ and
  $\theta_Y \cap (\cS \times \cS) = \{(S_1, S_2) \in \cS \times \cS
  \mid S_1 \cap Y = S_2 \cap Y\}$, we may conclude that $(S,
  S\backslash \{x\}) \in \theta_Z \backslash \theta_Y$. Thus,
  $\theta_Y \neq \theta_Z$ as required.
\end{proof}

We have now all the ingredients to show the main result of this
section.
\begin{proposition}
  Let $(X,\cS)$ be a Pervin space. The following are equivalent.
  \begin{enumerate}
  \item\label{item:7} The space $(X,\cS)$ is Pervin-$T_D$.
  \item\label{item:8} Different subspaces of $(X, \cS)$ induce
    different Frith quotients of $(\Om_\cS(X), \cS)$.
  \item\label{item:13} For no $x\in X$ do we have that $\theta_{X{\sm}\{x\}}$ is trivial.
  \item\label{item:9} The Skula topology on $X$ induced by $(X, \cS)$
    is discrete.
  \end{enumerate}
\end{proposition}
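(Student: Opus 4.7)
The plan is to establish the cycle $(a) \Rightarrow (b) \Rightarrow (c) \Rightarrow (a)$ and then handle the equivalence $(a) \Leftrightarrow (d)$ separately. The first implication, $(a) \Rightarrow (b)$, is exactly the content of Lemma~\ref{ptd}. For $(b) \Rightarrow (c)$, the key remark is that $\theta_X$ is the identity congruence on $\Omega_\cS(X)$: the subspace embedding determined by $Y := X$ is the identity on $X$, so $\Omega(m_X)$ is the identity of $\Omega_\cS(X)$ and $\ker(\Omega(m_X)) \cap (\cS \times \cS)$ is just the diagonal on $\cS$. Hence under $(b)$, for every $x \in X$ we must have $\theta_{X \setminus \{x\}} \neq \theta_X = \id$, so $\theta_{X \setminus \{x\}}$ is non-trivial.

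For $(c) \Rightarrow (a)$, fix $x \in X$. Since $\theta_X$ is trivial and $\theta_{X \setminus \{x\}}$ is not, the two congruences are distinct, so Lemma~\ref{l:12} supplies $S_1, S_2 \in \cS$ with $\{x\} = S_1 \setminus S_2$. Then $S := S_1$ contains $x$, and
\[
  S \setminus \{x\} = S_1 \setminus (S_1 \setminus S_2) = S_1 \cap S_2 \in \cS,
\]
which is precisely the Pervin-$T_D$ property at $x$.

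The equivalence $(a) \Leftrightarrow (d)$ hinges on the observation that, because $\cS$ is closed under finite meets and finite joins, the Skula topology $\Omega_{\overline{\cS}}(X)$ admits as a basis the sets of the form $S \setminus T$ with $S, T \in \cS$. Assuming $(a)$, for each $x \in X$ one chooses $S \in \cS$ with $x \in S$ and $S \setminus \{x\} \in \cS$, so that $\{x\} = S \setminus (S \setminus \{x\})$ is Skula-open, whence $(d)$ follows. Conversely, assuming $(d)$, each singleton $\{x\}$ is Skula-open, so there exist $S, T \in \cS$ with $x \in S \setminus T \subseteq \{x\}$, forcing $\{x\} = S \setminus T$; then $x \in S$ and the same computation as above gives $S \setminus \{x\} = S \cap T \in \cS$.

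No step presents a real obstacle: the substantive work has already been done in Lemmas~\ref{ptd} and~\ref{l:12}, and what remains is essentially bookkeeping — identifying $\theta_X$ with the identity congruence, verifying the explicit form of the Skula basis via closure of $\cS$ under finite operations, and repeatedly using the identity $S \setminus \{x\} = S \cap T$ when $\{x\} = S \setminus T$.
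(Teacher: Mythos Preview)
Your proof is correct and uses essentially the same ingredients as the paper: Lemma~\ref{ptd} for $(a)\Rightarrow(b)$, the observation that $\theta_X$ is the identity for $(b)\Rightarrow(c)$, Lemma~\ref{l:12} to extract $S_1,S_2\in\cS$ with $\{x\}=S_1\setminus S_2$, and the computation $S_1\setminus\{x\}=S_1\cap S_2$. The only difference is organizational: the paper runs a single cycle $(a)\Rightarrow(b)\Rightarrow(c)\Rightarrow(d)\Rightarrow(a)$, whereas you close the cycle at $(c)\Rightarrow(a)$ and treat $(a)\Leftrightarrow(d)$ separately, making explicit that the sets $S\setminus T$ with $S,T\in\cS$ form a basis for the Skula topology --- a detail the paper leaves implicit.
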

\begin{proof}
  The fact that~\ref{item:7} implies~\ref{item:8} is the content of
  Lemma~\ref{ptd}, that~\ref{item:8} implies~\ref{item:13} is trivial,
  and that~\ref{item:13} implies~\ref{item:9} follows easily from
  Lemma~\ref{l:12}. It remains to show that~\ref{item:9}
  implies~\ref{item:7}. Let $x \in X$. Since the Skula topology on $X$
  induced by $(X, \cS)$ is, by definition, generated by the elements
  of $\cS$ and their complements, there exist $S_1, S_2 \in \cS$ such
  that $\{x\} = S_1 \backslash S_2$. In turn, this implies that $S_1$
  is the disjoint union of $\{x\}$ and $S_1 \cap S_2$. Therefore,
  $S_1$ is such that $x \in S_1$ and $S_1 \backslash \{x\} = S_1 \cap
  S_2$ belongs to $\cS$. This shows that $(X, \cS)$ is Pervin-$T_D$ as
  intended.
\end{proof}

\section{Frith frames as quasi-uniform frames}\label{sec:2}
In this section we show that the category of Frith frames may be seen
as a coreflective full subcategory of the category of transitive and
totally bounded quasi-uniform frames.

If $K$ is a frame and $r \in K$, then we denote
\[E_r:= (r \oplus 1) \vee (1 \oplus r^*).\]
Note that the set $(r \oplus 1) \cup (1 \oplus r^*)$ is already a
$C$-ideal, thus it equals $E_r$. It is shown in~\cite[Proposition
5.2]{HunsakerPicado2002} that $E_r \circ E_r = E_r$, and
in~\cite[Proposition 5.3]{HunsakerPicado2002} that $E_r$ is an
entourage if and only if $r$ is complemented. Moreover, if $r$ is
complemented, then $\{r, r^*\}$ is a cover of $K$ and, since $(r
\oplus r) \vee (r^* \oplus r^*) \subseteq E_r$, it follows that $E_r$
is a finite entourage.

For a subset $R \subseteq K$ of complemented elements, we denote
$R^*:= \{r^* \mid r \in R\}$, and we let $\cE_R$ be the filter
generated by $\{E_r \mid r \in R\}$. We start by establishing an
important property of the relations $\luni_1$ and $\luni_2$ for the
filter $\cE_R$.
\begin{lemma}\label{l:17}
  Let $K$ be a frame and $R \subseteq K$ be a subset of complemented
  elements. For every $x, a \in K$,
  \begin{enumerate}
  \item\label{item:23} if $x \luni_1 a$, then there exists some $r \in
    \langle R\rangle_{\dlat}$ such that $x \leq r \leq a$,
  \item\label{item:24} if $x \luni_2 a$, then there exists some $r \in
    \langle R^*\rangle_{\dlat}$ such that $x \leq r \leq a$,
  \end{enumerate}
\end{lemma}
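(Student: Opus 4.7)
The plan is to first reduce to a manageable witness. Fix $x \luni_1 a$ with a witness $A \in \cE_R$, so that $A \circ (x \oplus x) \subseteq a \oplus a$. Since $\cE_R$ is the filter generated by $\{E_r \mid r \in R\}$, there are $r_1, \dots, r_n \in R$ with $B := E_{r_1} \cap \cdots \cap E_{r_n} \subseteq A$, and monotonicity of composition in its first argument gives $B \circ (x \oplus x) \subseteq a \oplus a$. Hence it suffices to work with $B$. The case $x = 0$ is handled by taking $r = 0$, so I assume $x \neq 0$ throughout.

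Next, I would extract the key inequality from the containment $B \circ (x \oplus x) \subseteq a \oplus a$. A direct unfolding of the definition of composition of $C$-ideals, using that $(e,d) \in x \oplus x$ with $e \neq 0$ reduces (modulo trivial summands $c \oplus 0$) to $e \leq x$ and $d \leq x$, yields
\[B \circ (x \oplus x) \;=\; \bigvee \bigl\{ c \oplus x \;\big|\; \exists\, 0 < e \leq x \text{ with } (c,e) \in B \bigr\},\]
where $(c,e) \in B$ means that for every $i \in \{1,\dots,n\}$, either $c \leq r_i$ or $e \leq r_i^*$. The containment in $a \oplus a$ thus forces: for every nonzero $c$ admitting some $0 < e \leq x$ with $(c,e) \in B$, one has $c \leq a$ (and, as a by-product, $x \leq a$).

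The crucial step is to feed this inequality with the atoms $a_J := \bigwedge_{i \in J} r_i \wedge \bigwedge_{i \notin J} r_i^*$ ($J \subseteq \{1, \dots, n\}$) of the finite Boolean subalgebra of $L$ generated by $\{r_1, \dots, r_n\}$. Since $\bigvee_J a_J = 1$, one has $x = \bigvee_J (x \wedge a_J)$. For each $J$ with $x \wedge a_J \neq 0$, set $c := \bigwedge_{i \in J} r_i$ and $e := x \wedge a_J$; then $c \leq r_i$ for every $i \in J$ and $e \leq a_J \leq r_i^*$ for every $i \notin J$, so $(c, e) \in B$, while $c \neq 0$ because $c \geq a_J$. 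Hence $\bigwedge_{i \in J} r_i \leq a$. Defining
\[r \;:=\; \bigvee \Bigl\{\, \bigwedge_{i \in J} r_i \;\Big|\; J \subseteq \{1, \dots, n\},\ x \wedge a_J \neq 0 \,\Bigr\},\]
one obtains $r \in \langle R \rangle_{\dlat}$ and $r \leq a$ directly, while $x \leq r$ follows from $x \wedge a_J \leq a_J \leq \bigwedge_{i \in J} r_i$ for each $J$ appearing in the join.

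Part (b) proceeds in perfect analogy. The analysis of $(x \oplus x) \circ B \subseteq a \oplus a$ instead forces that, whenever $d \neq 0$ admits $0 < e \leq x$ with $(e, d) \in B$, then $d \leq a$; applied to $e := x \wedge a_J$ and $d := \bigwedge_{i \notin J} r_i^*$ (for each $J$ with $x \wedge a_J \neq 0$), this yields $\bigwedge_{i \notin J} r_i^* \leq a$, and one takes $r := \bigvee \{\bigwedge_{i \notin J} r_i^* \mid x \wedge a_J \neq 0\} \in \langle R^* \rangle_{\dlat}$. The main technical obstacle throughout is the bookkeeping of the $C$-ideal structure in $L \oplus L$ --- in particular, correctly identifying $B \circ (x \oplus x)$ and translating its containment in $a \oplus a$ into inequalities on elements of $L$; once these are in hand, the atomic decomposition afforded by $\{r_1, \dots, r_n\}$ makes the construction of the approximant $r$ transparent.
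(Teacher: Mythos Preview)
Your proposal is correct and follows essentially the same approach as the paper: both arguments reduce to a finite intersection $\bigcap_{i=1}^n E_{r_i}$, use the partition $1 = \bigvee_J (\bigwedge_{i \in J} r_i \wedge \bigwedge_{i \notin J} r_i^*)$ afforded by the complemented elements $r_1,\dots,r_n$, and then plug the resulting pieces into the composition condition to extract the interpolant $r = \bigvee \{\bigwedge_{i \in J} r_i \mid \cdots\}$. The only cosmetic differences are that the paper selects the index sets $P$ via the slightly weaker condition $\overline r_P \wedge x \neq 0$ (rather than your $x \wedge a_J \neq 0$) and verifies $(r_P, x) \in (a \oplus a)$ directly from the definition of composition, whereas you first rewrite $B \circ (x \oplus x)$ explicitly; neither choice affects the substance of the proof.
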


\begin{proof} We will only prove~\ref{item:23}, as the proof
  of~\ref{item:24} is similar. By definition, if $x \luni_1 a$, then
  there are some $r_1, \dots, r_n \in R$ such that $(\bigcap_{i = 1}^n
  E_{r_i}) \circ (x \oplus x) \subseteq (a \oplus a)$. We let $[n]$
  denote the set $\{1, \dots, n\}$, and for every $P \subseteq [n]$ we
  set $r_P := \bigwedge_{i \in P} r_i$ and $\overline{r}_P:=
  \bigwedge_{i \notin P}r_i^*$. Since each $r_i$ is complemented, we
  have
  \[1 = \bigwedge_{i = 1}^n (r_i \vee r_i^*) = \bigvee \{r_P \wedge
    \overline{r}_P \mid P \subseteq [n]\}.\]
  Therefore,
  \[x = \bigvee \{r_P \wedge \overline{r}_P\wedge x \mid P \subseteq
    [n]\} \leq \bigvee \{r_P \mid P \subseteq [n], \,
    \overline{r}_P\wedge x \neq 0\}.\]
  Thus, it suffices to show that the element $\bigvee \{r_P \mid P
  \subseteq [n], \, \overline{r}_P\wedge x \neq 0\}$, which belongs to
  the sublattice of~$K$ generated by~$R$, is smaller than or equal
  to~$a$. We let $P \subseteq [n]$ satisfy $\overline{r}_P \wedge x
  \neq 0$. Then, we have $(r_P, \overline{r}_P\wedge x) \in \bigcap_{i
    = 1}^n E_{r_i}$ and $(\overline{r}_P\wedge x, x) \in (x \oplus
  x)$. Since $\overline{r}_P\wedge x \neq 0$, it follows that $(r_P,
  x)$ belongs to $(\bigcap_{i = 1}^n E_{r_i}) \circ (x \oplus x)$
  which, by hypothesis, is contained in $(a \oplus a)$. In particular,
  it follows that $r_P \leq a$, as required.
\end{proof}

The following is a slight variation of~\cite[Theorem
5.5]{HunsakerPicado2002} and the proof in there may be easily
adapted. For the sake of self-containment, we will use the previous
lemma to provide an alternative proof.
\begin{theorem}\label{t:1}
  Let $K$ be a frame and $R \subseteq K$ be a subset of complemented
  elements.  Then, the filter $\cE_R$ generated by the set of
  entourages $\{E_r \mid r \in R\}$ is such that $\cL_1(\cE_R) =
  \langle R\rangle_{\Frm}$ and $\cL_2(\cE_R) = \langle
  R^*\rangle_{\Frm}$. In particular, if $K$ is generated by $R \cup
  R^*$, then $\cE_R$ is a transitive and totally bounded
  quasi-uniformity on~$K$.
\end{theorem}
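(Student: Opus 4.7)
The plan is to prove the two equalities $\cL_1(\cE_R)=\langle R\rangle_\Frm$ and $\cL_2(\cE_R)=\langle R^*\rangle_\Frm$ separately, then derive the ``in particular'' clause from them together with the properties of the entourages $E_r$ recalled above. I will only detail the first equality, since the second is entirely analogous after swapping $\luni_1$ with $\luni_2$ and using the $C$-ideal $1\oplus r^*$ in place of $r\oplus 1$.

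For the inclusion $\langle R\rangle_\Frm\se \cL_1(\cE_R)$, I would first show that every $r\in R$ satisfies $r\luni_1 r$ through the entourage $E_r$; concretely, I would verify $E_r\circ(r\oplus r)\se r\oplus r$ using that a pair $(a,c)\in E_r$ with $c\neq 0$ and $c\leq r$ forces $a\leq r$ (otherwise one would need $c\leq r^*$, hence $c\leq r\we r^*=0$). This places $R$ inside $\cL_1(\cE_R)$, and since $\cE_R$ is a filter of entourages, $\cL_1(\cE_R)$ is a subframe of $K$, so it contains $\langle R\rangle_\Frm$. For the reverse inclusion, I would apply Lemma~\ref{l:17}\ref{item:23}: if $a\in\cL_1(\cE_R)$, then $a=\bve\{b\mid b\luni_1 a\}$, and for each such $b$ there is some $r_b\in\langle R\rangle_\dlat$ with $b\leq r_b\leq a$; the join of the $r_b$'s then equals $a$ and belongs to $\langle R\rangle_\Frm$.

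For the ``in particular'' statement, assume $K$ is generated as a frame by $R\cu R^*$. Axiom~\ref{item:QU1} is immediate, and~\ref{item:QU3} follows from the two equalities just established, since $\cL_1(\cE_R)\cu \cL_2(\cE_R)=\langle R\rangle_\Frm\cu \langle R^*\rangle_\Frm$ generates $K$. For~\ref{item:QU2}, I would use the cited facts $E_r\circ E_r=E_r$ together with monotonicity of composition to see that, for any finite intersection $F=\bca_{i=1}^n E_{r_i}$, $F\circ F\se \bca_{i=1}^n(E_{r_i}\circ E_{r_i})=F$; every element of the filter $\cE_R$ dominates such an $F$, so $F\circ F\se F\se E$ does the job. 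The same computation shows each $F$ is transitive once combined with the general inclusion $F\se F\circ F$ valid for any entourage, giving a transitive basis and hence~\ref{item:QU1}--\ref{item:QU3} plus transitivity.

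The remaining point is total boundedness: I would show that each basis element $F=\bca_{i=1}^n E_{r_i}$ is finite. Writing $[n]=\{1,\dots,n\}$, $r_P=\bwe_{i\in P}r_i$ and $\overline r_P=\bwe_{i\notin P}r_i^*$ for $P\se[n]$, the identity $1=\bwe_{i=1}^n(r_i\ve r_i^*)=\bve_{P\se[n]}(r_P\we \overline r_P)$ gives a finite cover of $K$ by complemented elements, and a direct check shows $(r_P\we\overline r_P,\,r_P\we\overline r_P)\in F$ for every $P$. Thus $\bve_{P\se[n]}(r_P\we\overline r_P)\oplus(r_P\we\overline r_P)\se F$ witnesses that $F$ is finite. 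The only mildly delicate step in the whole argument is the computation verifying $r\luni_1 r$ (and its dual $r^*\luni_2 r^*$) from the definition of $E_r$; everything else reduces to bookkeeping with $C$-ideals, the lemma, and the algebra of finite intersections of the $E_r$.
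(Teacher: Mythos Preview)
Your proposal is correct and follows essentially the same approach as the paper: the inclusion $\cL_1(\cE_R)\subseteq\langle R\rangle_\Frm$ via Lemma~\ref{l:17}\ref{item:23} and the reverse inclusion via $E_r\circ(r\oplus r)\subseteq r\oplus r$ are exactly the paper's arguments. The paper leaves the ``in particular'' clause implicit (relying on its earlier remark that finite intersections of transitive finite entourages are again transitive and finite), whereas you spell out the verification of \ref{item:QU1}--\ref{item:QU3}, transitivity, and total boundedness explicitly; this is harmless additional detail rather than a different route.
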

\begin{proof}
  We first show that $\cL_1(\cE_R) \subseteq \langle
  R\rangle_{\Frm}$. We fix an element $a \in \cL_1(\cE_R)$, and for
  each $x \in K$ satisfying $x \luni_1 a$, we let $r_x \in \langle
  R\rangle_{\dlat}$ be such that $x \leq r_x \leq a$, as given by
  Lemma~\ref{l:17}\ref{item:23}. Then, we have
  \[a = \bigvee \{x\in K \mid x \luni_1 a\} \leq \bigvee \{r_x \mid x
    \in K, \, x \luni_1 a\} \leq a.\]
  Thus, $a = \bigvee \{r_x \mid x \in K, \, x \luni_1 a\} $ is a frame
  combination of elements of $R$. Conversely, since $\cE_R$ is a
  filter and hence, $\cL_1(\cE_R)$ is a frame, it suffices to show
  that $R \subseteq \cL_1(\cE_R)$. But that is a consequence of the
  inclusion $E_r \circ (r \oplus r) \subseteq (r \oplus r)$ which
  holds for every complemented element $r \in K$.
\end{proof}

Let~$L$ be a frame. Then, for each $a \in L$, the congruence
$\nabla_a$ is complemented in~$\cC L$, and $\cC L$ is generated, as a
frame, by these congruences together with their complements. Thus, by
Theorem~\ref{t:1}, the set $\{E_{\nabla_a} \mid a \in L\}$ is a
subbasis for a transitive and totally bounded quasi-uniformity~$\cE_L$
on~$\cC L$. This is called the \emph{Frith quasi-uniformity}, and it
is the pointfree counterpart of the \emph{Pervin quasi-uniformity}, in
the sense that, for every frame~$L$, the frame $\cL_1(\cE_L)$ is
isomorphic to~$L$.
More generally, for every Frith frame $(L, S)$, the set
$\{E_{\nabla_s} \mid s \in S\}$ is a subbasis for a transitive and
totally bounded quasi-uniformity~$\cE_S$ on~$\cC_SL$.
We will now see that the assignment $(L, S) \mapsto (\cC_SL, \cE_S)$
extends to a full embedding $E: \ffrm \hookrightarrow \qunifrm_{\rm
  trans,\, tot\, bd}$, where $\qunifrm_{\rm trans,\, tot\, bd}$
denotes the full subcategory of $\qunifrm$ consisting of those
quasi-uniform frames that are transitive and totally bounded.

Before proceeding, we show a couple of technical results concerning
$C$-ideals of the form $E_r$. The first one is a straightforward
  computation.
\begin{lemma}\label{l:11}
  Let $h: K_1 \to K_2$ be a frame homomorphism. Then, for every $r \in
  K_1$, we have $(h \oplus h)(E_{r}) \subseteq E_{{h(r)}}$. The
  converse inclusion holds if~$r$ is complemented.
\end{lemma}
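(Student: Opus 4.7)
The plan is to use that $h\oplus h$ is a frame homomorphism (so it preserves arbitrary joins and the coproduct injections) and to exploit the fact that pseudocomplements behave well under $h$ only when the element is complemented.

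First, I would unwind the definition: since $E_r=(r\oplus 1)\vee (1\oplus r^*)$ and $h\oplus h$ preserves joins, together with $(h\oplus h)(a\oplus b)=h(a)\oplus h(b)$ for all $a,b\in K_1$, one obtains directly
\[(h\oplus h)(E_r)=(h(r)\oplus 1)\vee(1\oplus h(r^*)).\]
Next, I would observe that $h(r)\wedge h(r^*)=h(r\wedge r^*)=h(0)=0$, which by the defining property of pseudocomplement in~\eqref{eq:5} gives $h(r^*)\leq h(r)^*$. Hence $1\oplus h(r^*)\subseteq 1\oplus h(r)^*$, which yields
\[(h\oplus h)(E_r)\subseteq (h(r)\oplus 1)\vee(1\oplus h(r)^*)=E_{h(r)},\]
establishing the first inclusion.

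For the converse inclusion under the hypothesis that $r$ is complemented, I would use that $r\vee r^*=1$ implies $h(r)\vee h(r^*)=1$. Combined with $h(r)\wedge h(r^*)=0$, this shows that $h(r^*)$ is the complement of $h(r)$, so $h(r^*)=h(r)^*$. Substituting in the computation above then gives
\[(h\oplus h)(E_r)=(h(r)\oplus 1)\vee(1\oplus h(r)^*)=E_{h(r)},\]
which in particular contains $E_{h(r)}$.

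There is no real obstacle here; the only subtlety is remembering that $h$ need not preserve pseudocomplements in general but does so on complemented elements, which is exactly the point where the hypothesis of the second half is used.
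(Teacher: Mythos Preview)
Your proof is correct and is precisely the straightforward computation the paper alludes to (the paper itself omits the proof). The key steps---that $(h\oplus h)(a\oplus b)=h(a)\oplus h(b)$, that $h(r^*)\leq h(r)^*$ always, and that equality holds when $r$ is complemented---are exactly what is needed.
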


\begin{lemma}\label{l:5}
  Let $K$ be a frame and $r, r_1, \dots, r_n \in K$ be complemented
  elements. If $\bigcap_{i = 1}^n E_{r_i} \subseteq E_r$, then $r$ is
  a lattice combination of the elements of $\{r_1, \dots, r_n\}$.
\end{lemma}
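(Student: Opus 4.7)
The plan is to convert the hypothesis $\bigcap_{i=1}^n E_{r_i} \subseteq E_r$ into concrete lattice information about $r$. The key observation is that for a complemented element $s$, the $C$-ideal $E_s$ has a very explicit description: $(a,b) \in E_s$ if and only if $a \leq s$ or $b \leq s^*$. This follows from the remark preceding the statement that $E_s = (s \oplus 1) \cup (1 \oplus s^*)$ is already a $C$-ideal. I would record this first, both for $r$ and for each $r_i$.

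Next I would run the partition-of-unity trick that already appears in the proof of Lemma~\ref{l:17}. For each subset $P \subseteq [n]:=\{1,\dots,n\}$ set
\[ r_P := \bigwedge_{i \in P} r_i \qquad \text{and} \qquad \overline{r}_P := \bigwedge_{i \notin P} r_i^*. \]
Since each $r_i$ is complemented, $1 = \bigwedge_{i=1}^n(r_i \vee r_i^*) = \bigvee_{P \subseteq [n]} (r_P \wedge \overline{r}_P)$. Moreover, by construction $(r_P, \overline{r}_P)$ lies in $E_{r_i}$ for every $i$ (if $i \in P$ the first coordinate is below $r_i$; if $i \notin P$ the second coordinate is below $r_i^*$), so the assumption $\bigcap_{i} E_{r_i} \subseteq E_r$ forces $(r_P, \overline{r}_P) \in E_r$. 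By the explicit description of $E_r$, this means that for each $P$ either $r_P \leq r$ or $\overline{r}_P \leq r^*$.

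Letting $\mathcal{P} := \{P \subseteq [n] \mid r_P \leq r\}$, I would then compute
\[ r \;=\; r \wedge 1 \;=\; r \wedge \bigvee_{P \subseteq [n]} (r_P \wedge \overline{r}_P) \;=\; \bigvee_{P \subseteq [n]} (r \wedge r_P \wedge \overline{r}_P). \]
For $P \notin \mathcal{P}$ we have $\overline{r}_P \leq r^*$, so $r \wedge \overline{r}_P = 0$ and the corresponding term vanishes. For $P \in \mathcal{P}$ we have $r_P \leq r$, so the term reduces to $r_P \wedge \overline{r}_P \leq r_P$. Hence
\[ r \;=\; \bigvee_{P \in \mathcal{P}} (r_P \wedge \overline{r}_P) \;\leq\; \bigvee_{P \in \mathcal{P}} r_P \;\leq\; r, \]
so $r = \bigvee_{P \in \mathcal{P}} r_P$, which is a finite join of finite meets of elements of $\{r_1,\dots,r_n\}$, thus a lattice combination of them as required. (When $\mathcal{P}$ is empty the join is $0$; when $\emptyset \in \mathcal{P}$ the relevant term is the empty meet $1$; both are in the bounded sublattice generated by the $r_i$'s.)

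There is no real obstacle here beyond being careful with the bookkeeping of the Boolean refinement by the $r_i$; the hard work of the section has already been done in Lemma~\ref{l:17}, and the present lemma is essentially its finitary converse extracted by evaluating the hypothesis on the canonical pairs $(r_P, \overline{r}_P)$.
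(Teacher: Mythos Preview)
Your proof is correct and follows essentially the same approach as the paper: both use the explicit description $E_s = (s\oplus 1)\cup(1\oplus s^*)$ to extract from the hypothesis the dichotomy $r_P \leq r$ or $\overline{r}_P \leq r^*$ for every $P\subseteq[n]$, then decompose $r$ along the Boolean partition $1=\bigvee_P(r_P\wedge\overline{r}_P)$ and simplify. The only cosmetic difference is that the paper indexes the final join by $\{P : r\wedge\overline{r}_P\neq 0\}$ whereas you use the possibly larger set $\{P : r_P\leq r\}$; both yield the same element.
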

\begin{proof}
  As before, we use $[n]$ to denote the set $\{1, \dots, n\}$ and, for
  every $P \subseteq [n]$ we let $r_P := \bigwedge_{i \in P} r_i$ and
  $\overline{r}_P := \bigwedge_{i \notin P} r_i^*$. Since, for every
  $P \subseteq [n]$, we have $(r_P, \overline{r}_P) \in \bigcap_{i =
    1}^nE_{r_i}$, it follows from the hypothesis that $(r_P,
  \overline{r}_P) \in E_r$, that is, $r_P \leq r$ or $\overline{r}_P
  \leq r^*$. Thus, by~\eqref{eq:5}, for every $P \subseteq [n]$, we
  have
  \begin{equation}
    \label{eq:10}
    r_P \leq r\qquad \text{or}\qquad\overline{r}_P \wedge r = 0.
  \end{equation}
  On the other hand, since each $r_i$ is complemented, we have
  \[r = r \wedge \bigwedge_{i = 1}^n (r_i \vee r_i^*) = r \wedge
    \bigvee \{r_P \wedge \overline{r}_P \mid P \subseteq [n]\} =
    \bigvee \{r \wedge r_P \wedge \overline{r}_P \mid P \subseteq
    [n]\}.\]
  Using~\eqref{eq:10}, we may then conclude that $r = \bigvee \{r_P
  \mid P \subseteq [n], \, r \wedge \overline{r}_P \neq 0\}$, that is,
  $r$ is a lattice combination of the elements of $\{r_1, \dots,
  r_n\}$.
\end{proof}

Let us now define the functor $E$ on the morphisms.  If $h:(L, S) \to
(M, T)$ is a morphism of Frith frames then, by Corollary~\ref{c:1},
$h$ uniquely extends to a frame homomorphism $\overline h: \cC_SL \to
\cC_TM$. The fact that $\overline{h}$ defines a quasi-uniform
homomorphism $\overline{h}: (\cC_S L, \cE_S) \to (\cC_TM, \cE_{T})$ is
a consequence of the following more general result, which can be
  proved using Lemmas~\ref{l:11} and~\ref{l:5}.

\begin{proposition}\label{p:3}
  Let $K, N$ be two frames, and $R\subseteq K$ and $U \subseteq N$ be
  sublattices of complemented elements. Let also $g: K \to N$ be a
  frame homomorphism. Then,
  \begin{enumerate}
  \item\label{item:18} $g[R] \subseteq U$ if and only if $(g \oplus
    g)[\cE_R] \subseteq \cE_U$;
  \item\label{item:19} $U \subseteq g[R]$ if and only if $\cE_U$ is
    contained in the filter generated by $(g \oplus g)[\cE_R]$.
  \end{enumerate}
\end{proposition}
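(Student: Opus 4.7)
The plan is to treat each equivalence by combining two tools: Lemma~\ref{l:11}, which gives $(g \oplus g)(E_r) = E_{g(r)}$ whenever $r$ is complemented, and Lemma~\ref{l:5}, which extracts lattice information from a containment $\bigcap_i E_{r_i} \subseteq E_r$ of entourages built from complemented elements. The unifying observation is that $g \oplus g$ is a frame homomorphism and hence preserves finite meets of $C$-ideals, so for every finite $r_1, \dots, r_n \in R$ we have $(g \oplus g)(\bigcap_{i=1}^n E_{r_i}) = \bigcap_{i=1}^n E_{g(r_i)}$, with all the relevant $E_{r_i}$ and $E_{g(r_i)}$ being genuine entourages by complementedness.

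For the forward direction of (a), I would take $E \in \cE_R$ and use that, by definition of the filter generated by $\{E_r \mid r \in R\}$, we have $E \supseteq \bigcap_i E_{r_i}$ for some $r_i \in R$; then $(g \oplus g)(E) \supseteq \bigcap_i E_{g(r_i)}$ and each $g(r_i)$ lies in $U$ by hypothesis, so $(g \oplus g)(E) \in \cE_U$. For the backward direction, I would pick $r \in R$ and observe that $(g \oplus g)(E_r) = E_{g(r)}$ lies in $\cE_U$ by hypothesis, hence $E_{g(r)} \supseteq \bigcap_i E_{u_i}$ for some $u_i \in U$; Lemma~\ref{l:5} then writes $g(r)$ as a lattice combination of the $u_i$, and closure of $U$ under finite meets and joins forces $g(r) \in U$.

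For part (b), the forward direction proceeds analogously: given $E \in \cE_U$ with $E \supseteq \bigcap_i E_{u_i}$, I would write $u_i = g(r_i)$ with $r_i \in R$, which is possible by the hypothesis $U \subseteq g[R]$, and use Lemma~\ref{l:11} to rewrite each $E_{u_i}$ as $(g \oplus g)(E_{r_i}) \in (g \oplus g)[\cE_R]$, whence $E$ sits above a finite intersection of elements of $(g \oplus g)[\cE_R]$ and therefore belongs to the filter that set generates. Conversely, for $u \in U$ the entourage $E_u$ lies in that filter, so $E_u \supseteq \bigcap_i (g \oplus g)(F_i)$ for some $F_i \in \cE_R$; writing each $F_i \supseteq \bigcap_j E_{r_{ij}}$ with $r_{ij} \in R$ and using once more that $g \oplus g$ preserves finite intersections, I obtain $E_u \supseteq \bigcap_{i,j} E_{g(r_{ij})}$. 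Lemma~\ref{l:5} then expresses $u$ as a lattice combination of the $g(r_{ij})$; since $R$ is a sublattice and $g$ is a lattice homomorphism, $g[R]$ is a sublattice of $N$, so $u \in g[R]$.

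The principal obstacle is essentially bookkeeping: one must carefully distinguish an entourage belonging to a filter (equivalently, dominating a finite intersection of basic entourages) from being itself basic, and then feed Lemma~\ref{l:5} precisely the right finite family of complemented elements so that closure of $U$ (respectively $g[R]$) under lattice operations closes out each backward direction.
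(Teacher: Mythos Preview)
Your proposal is correct and follows exactly the route the paper indicates: the paper does not give a detailed proof but simply states that the result ``can be proved using Lemmas~\ref{l:11} and~\ref{l:5}'', and your argument does precisely that, combining $(g\oplus g)(E_r)=E_{g(r)}$ for complemented~$r$ with the lattice-extraction of Lemma~\ref{l:5} and the fact that $g\oplus g$ preserves finite meets. The bookkeeping you flag (filter membership versus basic entourage, closure of $U$ and $g[R]$ under lattice operations) is handled correctly.
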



As a consequence, we have an embedding $E: \ffrm \hookrightarrow
\qunifrm_{\rm trans,\, tot\, bd}$ defined by $E(L, S) = (\cC_SL,\
\cE_S)$ and $E(h) = \overline{h}$. In fact, Proposition~\ref{p:3} also
implies that $E$ is full.

\begin{proposition}\label{p:9}
  There is a full embedding $E: \ffrm \hookrightarrow \qunifrm_{\rm
    trans,\, tot\, bd}$.
\end{proposition}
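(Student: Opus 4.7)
The plan is to show that the assignment $(L,S) \mapsto (\cC_SL, \cE_S)$ on objects, with $h \mapsto \overline{h}$ on morphisms (the unique extension supplied by Corollary~\ref{c:1}), defines a functor $E$ that is both faithful and full. First I would check that $E$ is well-defined on morphisms. Given a Frith frame morphism $h:(L,S) \to (M,T)$, Lemma~\ref{basic} implies that $R := \{\nabla_s \mid s \in S\}$ and $U := \{\nabla_t \mid t \in T\}$ are sublattices of complemented elements in $\cC_SL$ and $\cC_TM$, respectively (with complements $\Delta_s$ and $\Delta_t$, which lie in the corresponding frames by construction). Since $\overline{h}(\nabla_s) = \nabla_{h(s)}$ by Corollary~\ref{c:1} and $h[S] \subseteq T$, we have $\overline{h}[R] \subseteq U$, and Proposition~\ref{p:3}\ref{item:18} then yields $(\overline{h} \oplus \overline{h})[\cE_S] \subseteq \cE_T$, so $\overline{h}$ is quasi-uniform. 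Functoriality is automatic from the uniqueness clause of Corollary~\ref{c:1}. Faithfulness is immediate: if $\overline{h_1} = \overline{h_2}$, pre-composing with the frame embedding $\nabla: L \hookrightarrow \cC_SL$ gives $\nabla_{h_1(a)} = \nabla_{h_2(a)}$ for every $a \in L$, forcing $h_1 = h_2$.

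For fullness, let $k:(\cC_SL, \cE_S) \to (\cC_TM, \cE_T)$ be a quasi-uniform homomorphism. The converse direction of Proposition~\ref{p:3}\ref{item:18}, applied to the same sublattices $R$ and $U$, forces $k(\nabla_s) \in \{\nabla_t \mid t \in T\}$ for each $s \in S$, so we may define $h(s) \in T$ by $\nabla_{h(s)} = k(\nabla_s)$. Extending $h$ to all of $L$ uses the join-density of $S$: for $a \in L$ and any presentation $a = \bigvee_i s_i$ with $s_i \in S$, Lemma~\ref{basic} gives $k(\nabla_a) = \bigvee_i k(\nabla_{s_i}) = \nabla_{\bigvee_i h(s_i)}$, so injectivity of $\nabla$ ensures that $h(a) := \bigvee_i h(s_i)$ is independent of the presentation. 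Because $k$ is a frame homomorphism and $\nabla$ commutes with finite meets and arbitrary joins (Lemma~\ref{basic}), the resulting $h: L \to M$ is a frame homomorphism, and by construction $h[S] \subseteq T$, so $h$ is a Frith frame morphism. Finally, both $k$ and $\overline{h}$ are frame homomorphisms $\cC_SL \to \cC_TM$ whose composition with $\nabla: L \hookrightarrow \cC_SL$ equals $\nabla \circ h$, and each $\nabla_{h(s)}$ is complemented in $\cC_TM$; the uniqueness clause of Proposition~\ref{p:10} (equivalently Corollary~\ref{c:1}) then forces $k = \overline{h}$.

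The main obstacle is the fullness step, specifically extracting a frame homomorphism $L \to M$ from $k$ and ensuring it recovers $k$ on all of $\cC_SL$. This hinges on three ingredients already in place: the join-density of $S$ in $L$, the compatibility of $\nabla$ with joins and meets from Lemma~\ref{basic}, and the rigidity imposed by Proposition~\ref{p:3}\ref{item:18} that any quasi-uniform map between such congruence frames must send $\{\nabla_s \mid s \in S\}$ into $\{\nabla_t \mid t \in T\}$. Everything else amounts to routine bookkeeping around the universal property of $\cC_SL$.
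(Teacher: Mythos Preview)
Your proposal is correct and follows exactly the route the paper sketches: well-definedness and fullness both come from Proposition~\ref{p:3}\ref{item:18} applied to the sublattices $\{\nabla_s \mid s \in S\}$ and $\{\nabla_t \mid t \in T\}$, with the reconstruction of $h$ and the identification $k = \overline{h}$ handled via the uniqueness in Corollary~\ref{c:1}/Proposition~\ref{p:10}. The paper compresses all of this into a single sentence (``Proposition~\ref{p:3} also implies that $E$ is full''), and you have simply unpacked that sentence accurately.
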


Our next goal is to show that the embedding $E:\ffrm \hookrightarrow
\qunifrm_{\rm trans,\, tot\, bd}$ is a coreflection
(cf. Theorem~\ref{t:4}). We will need the following technical result:
\begin{lemma}\label{l:1}  Let $K$ be a
  frame, and $R_1, R_2 \subseteq K$ be subsets of complemented
  elements. Then, $\cE_{R_1} = \cE_{R_2}$ if and only if $\langle
  R_1\rangle_\dlat = \langle R_2\rangle_\dlat$.
\end{lemma}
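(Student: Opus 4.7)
The plan is to prove both directions using the concrete description $E_r = \{(a, b) \in K \times K \mid a \leq r \text{ or } b \leq r^*\}$, valid when $r$ is complemented, together with Lemma~\ref{l:5} for the hard direction. (The paper already notes that $(r \oplus 1) \cup (1 \oplus r^*)$ is a $C$-ideal, so this set-theoretic description of $E_r$ is correct.)

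For the forward direction, I would assume $\cE_{R_1} = \cE_{R_2}$ and show $R_1 \subseteq \langle R_2\rangle_\dlat$; the reverse inclusion then follows by symmetry and gives $\langle R_1\rangle_\dlat = \langle R_2\rangle_\dlat$. Fix $r \in R_1$. Then $E_r \in \cE_{R_1} = \cE_{R_2}$, and since $\cE_{R_2}$ is by definition the filter generated by $\{E_s \mid s \in R_2\}$, there exist finitely many $r_1, \dots, r_n \in R_2$ with $\bigcap_{i=1}^n E_{r_i} \subseteq E_r$. Lemma~\ref{l:5} then gives $r \in \langle r_1, \dots, r_n\rangle_\dlat \subseteq \langle R_2\rangle_\dlat$, as desired.

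For the converse, the main step is to establish the two auxiliary inclusions
\[E_r \cap E_s \subseteq E_{r \wedge s} \qquad \text{and} \qquad E_r \cap E_s \subseteq E_{r \vee s}\]
for any complemented $r, s \in K$. Each follows by a short four-case analysis based on the explicit description of $E_r$, combined with the De~Morgan identities $(r \wedge s)^* = r^* \vee s^*$ and $(r \vee s)^* = r^* \wedge s^*$, which hold because $r$ and $s$ are complemented (and so $r \wedge s$ and $r \vee s$ are too, giving that $\langle R\rangle_\dlat$ is again a sublattice of complemented elements). Granted these, if $\langle R_1\rangle_\dlat = \langle R_2\rangle_\dlat$ and $r \in R_1 \subseteq \langle R_2\rangle_\dlat$, then $r$ is a finite lattice combination of some $r_1, \dots, r_n \in R_2$, and an induction on the depth of this combination yields $\bigcap_{i=1}^n E_{r_i} \subseteq E_r$. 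Hence $E_r \in \cE_{R_2}$ for every $r \in R_1$, so $\cE_{R_1} \subseteq \cE_{R_2}$, and the reverse inclusion follows by symmetry.

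The only substantive obstacle is verifying the two closure identities in the converse direction; these are routine case checks but deserve to be recorded, since they are what ensures that passing from $R$ to $\langle R\rangle_\dlat$ does not enlarge the generated filter. Everything else reduces to the definition of a filter together with Lemma~\ref{l:5}.
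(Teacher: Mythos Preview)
Your proposal is correct and follows essentially the same route as the paper: both directions rest on Lemma~\ref{l:5} for the implication $\cE_{R_1} = \cE_{R_2} \Rightarrow \langle R_1\rangle_\dlat = \langle R_2\rangle_\dlat$, and on the two inclusions $E_r \cap E_s \subseteq E_{r \wedge s}$ and $E_r \cap E_s \subseteq E_{r \vee s}$ for the converse. The only difference is organizational: the paper packages the converse as the single statement $\cE_{R} = \cE_{\langle R\rangle_\dlat}$, whereas you unfold it as an induction on the lattice term, but the content is identical.
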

\begin{proof}
  For $i = 1,2$, we let $R_i'$ denote the lattice generated
  by~$R_i$. We first argue that $\cE_{R_i} = \cE_{R_i'}$, which
  implies that $\cE_{R_1} = \cE_{R_2}$ whenever $R_1$ and $R_2$
  generate the same sublattice of~$K$.  Since $R_i \subseteq R_i'$, we
  clearly have $\cE_{R_i} \subseteq \cE_{R_i'}$. For the reverse
  inclusion, it suffices to observe that for every $r, r' \in R_i$,
  the entourage $E_{r}\cap E_{r'}$ is contained both in $E_{r \wedge
    r'}$ and in $E_{r \vee r'}$. Let us prove the converse
  implication. We suppose that $\cE_{R_1} = \cE_{R_2}$ and we let $r
  \in R_1$. Then, $E_r \in \cE_{R_1}$ and so, there are some $r_1,
  \dots, r_n \in R_2$ such that $E_r \supseteq E_{r_1} \cap \dots \cap
  E_{r_n}$. By Lemma~\ref{l:5}, this yields $r \in \langle R_2
  \rangle_\dlat$ and thus, $\langle R_1\rangle_\dlat \subseteq \langle
  R_2\rangle_\dlat$. By symmetry, we have $\langle R_1\rangle_\dlat =
  \langle R_2\rangle_\dlat$ as required.
\end{proof}

We now show that every transitive and totally bounded quasi-uniformity
$\cE$ on a frame $K$ is of the form $\cE_{R}$ for a suitable bounded
sublattice $R$ of~$K$. We introduce the following notation:
\begin{definition}\label{sec:rke}
  Given a transitive and totally bounded quasi-uniform frame $(K,
  \mathcal E)$, we will denote by $R_{(K, \, \cE)}$ the set of all
  complemented elements $r$ of $K$ such that $E_r \in \cE$.
\end{definition}
The proof of the next result is inspired by the proof of an
unpublished result for Pervin spaces, which is due to Gehrke,
Grigorieff, and Pin.
\begin{proposition}\label{p:8}
  Let $(K, \mathcal E)$ be a transitive and totally bounded
  quasi-uniform frame. Then, $R_{(K, \, \cE)}$ is a sublattice of~$K$
  satisfying $\cE = \cE_{R_{(K,\, \cE)}}$.
\end{proposition}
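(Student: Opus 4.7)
My plan is to split the proposition into the two claims and prove them separately.

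First, the fact that $R := R_{(K, \cE)}$ is a sublattice of $K$ is routine. The elements $0$ and $1$ are complemented, and $E_0 = E_1 = 1 \oplus 1$ (the top $C$-ideal of $K \oplus K$) lies in every filter of entourages. For $r_1, r_2 \in R$, both $r_1 \wedge r_2$ and $r_1 \vee r_2$ are complemented since complemented elements form a sublattice of the frame $K$, and the inclusions $E_{r_1} \cap E_{r_2} \subseteq E_{r_1 \wedge r_2}$ and $E_{r_1} \cap E_{r_2} \subseteq E_{r_1 \vee r_2}$ — already recorded in the proof of Lemma~\ref{l:1} — combined with $\cE$ being a filter, show that the corresponding entourages lie in $\cE$.

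The inclusion $\cE_R \subseteq \cE$ is immediate since every generator $E_r$ of $\cE_R$ with $r \in R$ lies in $\cE$. For the reverse inclusion, I will use that $(K, \cE)$ has a basis of transitive and finite entourages, so it is enough to show $F \in \cE_R$ for each transitive and finite $F \in \cE$. Fix then such an $F$, together with a witness $\bigvee_{i=1}^n (a_i \oplus a_i) \subseteq F$ satisfying $\bigvee_i a_i = 1$.

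The construction is a pointfree adaptation of the Gehrke-Grigorieff-Pin argument for Pervin spaces. Let $\sim$ be the transitive closure on $\{1, \dots, n\}$ of the one-step relation $i \sim_1 j \iff a_i \wedge a_j \neq 0$, and let $C_1, \dots, C_k$ be its equivalence classes. Setting $r_l := \bigvee_{i \in C_l} a_i$, elements of distinct classes have disjoint meets, so $r_1, \dots, r_k$ form a partition of $1$ by complemented elements. Combining transitivity of $F$ with the $C$-ideal axioms, one shows that $(a_i, a_j) \in F$ whenever $i \sim j$, which yields $r_l \oplus r_l \subseteq F$ for every $l$. I then define a preorder on $\{1, \dots, k\}$ by $l \preceq l' \iff (r_l, r_{l'}) \in F$, and for each downset $D$ of this preorder I set $s_D := \bigvee_{l \in D} r_l$, which is complemented with complement $s_{D^c}$.

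The main obstacle, and heart of the proof, is establishing the equality $F = \bigcap_D E_{s_D}$ as $D$ ranges over the (finitely many) downsets of $\preceq$. The key claim is that both sides coincide with the set of pairs $(x, y) \in K \times K$ such that $l \preceq l'$ whenever $x \wedge r_l \neq 0$ and $y \wedge r_{l'} \neq 0$. For $F$, the nontrivial direction argues that if $(x', y') \in F$ with $0 \neq x' \leq r_l$ and $0 \neq y' \leq r_{l'}$, then combining $(x', y')$ with $(r_l, x') \in r_l \oplus r_l \subseteq F$ and $(y', r_{l'}) \in r_{l'} \oplus r_{l'} \subseteq F$ via transitivity yields $(r_l, r_{l'}) \in F$; the converse direction uses the $C$-ideal axioms to reassemble arbitrary $(x, y)$ from its blocks $(x \wedge r_l, y \wedge r_{l'})$. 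The matching analysis of $\bigcap_D E_{s_D}$ is carried out using the principal downsets $D = \downarrow l'$. Once this characterization is in place, each $s_D$ belongs to $R$ (since $F \subseteq E_{s_D}$ places $E_{s_D}$ in the filter $\cE$), and $F$ is exhibited as a finite intersection of entourages $E_{s_D}$ with $s_D \in R$, placing $F$ in $\cE_R$ and completing the proof.
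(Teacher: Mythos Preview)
Your proposal is correct and follows essentially the same strategy as the paper's proof: reduce the finite cover witnessing total boundedness to a partition into complemented blocks (the paper does this by an informal merging argument, you by taking $\sim$-equivalence classes), and then exhibit the transitive entourage $F$ as a finite intersection of entourages $E_r$ built from unions of these blocks. The only notable difference is in the indexing of the complemented elements: the paper takes, for each block $x$ of the partition, the element $r_x = \bigvee\{y \in C \mid (x,y) \notin F\}$ (in your language, $s_D$ for $D$ the complement of the principal upset of $x$ in $\preceq$), whereas you range over all downsets $D$ of $\preceq$. Your family is larger but contains the paper's, and both intersections compute $F$; the key characterization of $F$ in terms of the preorder is the same in both arguments.
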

\begin{proof}
  Let $(K, \cE)$ be a transitive and totally bounded quasi-uniform
  frame. By Lemma~\ref{l:1}, we have that $R_{(K,\, \cE)}$ is a
  sublattice of~$K$,
  and by definition of~$R_{(K,\, \cE)}$, we also have $\cE_{R_{(K,\,
      \cE)}} \subseteq \cE$. Since~$\cE$ is a transitive
  quasi-uniformity, in order to show the converse inclusion, it
  suffices to show that $\cE_{R_{(K,\, \cE)}}$ contains every
  transitive entourage of~$\cE$.

  Let us fix a transitive entourage $E \in \cE$.  Since $\cE$ is
  totally bounded, there exists a finite cover $C$ of $K$ such that
  $\bigvee_{x \in C} x \oplus x \subseteq E$. Moreover, since $E$ is
  transitive, we may assume without loss of generality that $C$ is a
  partition. Indeed, suppose otherwise, say $x \wedge y \neq 0$ for
  some distinct $x, y \in C$. Then, since $(x,x), (y, y) \in E$
  implies, by~\ref{item:J1}, that $(x, x\wedge y), (x \wedge y, y) \in
  E$, by transitivity of~$E$, we have that $(x, y)$ belongs to~$E$,
  and by~\ref{item:J3}, it follows that $(x, x \vee y) \in
  E$. Similarly, we can show that $(x \vee y, x) \in E$. Using again
  transitivity of~$E$, we may conclude that $(x \vee y, x\vee y)$
  belongs to~$E$. Now, for each $x \in C$, let us denote
  \begin{equation}
    r_x := \bigvee \{y \in C \mid (x, y) \notin E\}.\label{eq:7}
  \end{equation}
  Since $C$ is a partition of~$M$, each $r_x$ is complemented with
  complement given by
  \begin{equation}
    r_x ^*= \bigvee \{z \in C \mid (x, z) \in
    E\}.\label{eq:1}
  \end{equation}
  We now show the following equality:
  \begin{equation}
    E = \bigcap \{E_{r_x} \mid x \in C\}\label{eq:6}
  \end{equation}
  Let $(a,b) \in K \times K$ be such that $a$ and $b$ are both
  non-zero, or else, $(a,b)$ belongs to both sides
  of~\eqref{eq:6}. Suppose that $(a, b) \in E$ and let $x \in
  C$. There are two cases: (a) $a \leq r_x$ and (b) $a \nleq
    r_x$. In the first case, $a$ is trivially in $E_{r_x}$. Let us
    assume that $a \nleq r_x$. Then, for showing $(a, b) \in E_{r_x}$,
    we need to show that $b \leq r_x^*$. Since $r_x$ is
  complemented, by~\eqref{eq:5}, having $a \nleq r_x$ is equivalent to
  having $a \wedge r_x^* \neq 0$. Therefore, by~\eqref{eq:1}, there
  exists some $z \in C$ such that $a \wedge z \neq 0$ and $(x, z) \in
  E$. On the other hand, again by~\eqref{eq:5}, $b \leq r_x^*$ if and
  only if $b \wedge r_x = 0$ and, by~\eqref{eq:7}, $b \wedge r_x = 0$
  if and only if $(x, y) \in E$ for every $y \in C$ satisfying $b
  \wedge y \neq 0$. So, we let $y \in C$ be such that $b \wedge y \neq
  0$. Since $(x, z)$, $(a, b)$, and $(y, y)$ belong to $E$ and $E$ is
  a downset, we have that $(x,a \wedge z)$, $(a \wedge z, b \wedge
  y)$, and $(b \wedge y, y)$ also belong to~$E$. By transitivity
  of~$E$, and since $a \wedge z, b \wedge y \neq 0$, it follows that
  $(x, y) \in E$. This shows that $b \leq r_x^*$ as required.
  Conversely, suppose that $(a,b)$ belongs to $E_{r_x}$, for every $x
  \in C$. Since $C$ is a partition of $K$, we have $a = \bigvee\{a
  \wedge x \mid x \in C\}$ and $b = \bigvee \{b \wedge y \mid y \in
  C\}$. Therefore, by~\ref{item:J2} and~\ref{item:J3}, in order to
  show that $(a,b) \in E$, it suffices to show that $(a \wedge x, b
  \wedge y) \in E$ for every $x, y \in C$ satisfying $a \wedge x \neq
  0$ and $b \wedge y \neq 0$. Since $x \leq r_x^*$, if $a \wedge x
  \neq 0$ then we also have $a \wedge r_x^* \neq 0$, that is, $a \nleq
  r_x$. Since $(a, b) \in E_{r_x}$, this implies $b \leq r^*_x$, that
  is, $b \wedge r_x = 0$. Finally, by~\eqref{eq:7}, if $b \wedge r_x =
  0$, then $(x, y) \in E$ whenever $b \wedge y \neq 0$. Since $E$ is a
  downset, we have $(a \wedge x, b \wedge y) \in E$ and this finishes
  the proof of~\eqref{eq:6}.

  Finally, \eqref{eq:6} implies, on the one hand, that each $r_x$
  belongs to $R_{(K, \cE)}$ (because $r_x$ is complemented, $E \in
  \cE$ and $E = \bigcap_{x\in C} E_{r_x} \subseteq E_{r_x}$) and, on
  the other hand, that $E \in \cE_{R_{(K, \cE)}}$ (because $E =
  \bigcap_{x\in C} E_{r_x}$ and each $r_x$ belongs to $R_{(K,
    \cE)}$). Therefore, $\cE = \cE_{R_{(K, \cE)}}$ as required.
\end{proof}
In particular, using Theorem~\ref{t:1}, we have the following:
\begin{corollary}
  Every frame admitting a transitive and totally bounded
  quasi-uniformity is zero-dimensional.
\end{corollary}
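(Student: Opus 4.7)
The plan is to combine Proposition~\ref{p:8} with Theorem~\ref{t:1} and axiom~\ref{item:QU3} of a quasi-uniformity. Concretely, let $(K,\cE)$ be a transitive and totally bounded quasi-uniform frame; my goal is to exhibit a set of complemented elements that join-generates~$K$.

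First I would invoke Proposition~\ref{p:8} to obtain the sublattice $R := R_{(K,\cE)}$ of complemented elements of $K$ satisfying $\cE = \cE_{R}$. Then Theorem~\ref{t:1} gives the two identifications
\[\cL_1(\cE) = \cL_1(\cE_R) = \langle R\rangle_{\Frm}\qquad\text{and}\qquad\cL_2(\cE) = \cL_2(\cE_R) = \langle R^{*}\rangle_{\Frm}.\]
Axiom~\ref{item:QU3} in the definition of a quasi-uniform frame tells us that $K$ itself is generated, as a frame, by $\cL_1(\cE)\cup\cL_2(\cE)$. Chaining these observations, $K$ is generated as a frame by $\langle R\rangle_{\Frm}\cup \langle R^{*}\rangle_{\Frm}$, and hence already by the set $R\cup R^{*}$.

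Every element of $R$ is complemented by construction, and complements of complemented elements are again complemented, so every element of $R\cup R^{*}$ lies in $B(K)$. Therefore $B(K)$ generates $K$ as a frame; since $B(K)$ is closed under finite meets and contains $0$ and $1$, join-density of $B(K)$ in $K$ follows, which is exactly the definition of zero-dimensionality. I do not anticipate any real obstacle here: the bulk of the work is already done in Proposition~\ref{p:8} and Theorem~\ref{t:1}, and the corollary only has to stitch them together with~\ref{item:QU3}.
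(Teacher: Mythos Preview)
Your proposal is correct and follows exactly the route the paper intends: the corollary is stated right after Proposition~\ref{p:8} with the remark ``In particular, using Theorem~\ref{t:1}'', and your argument spells out precisely this combination together with axiom~\ref{item:QU3}. There is nothing to add.
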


\begin{proposition}\label{p:18}
  Given a transitive and totally bounded quasi-uniform frame $(K,
  \cE)$, we let $L$ be the subframe of~$K$ generated by~$R_{(K,\,
    \cE)}$ and we let $e:L \hookrightarrow K$ be the corresponding
  embedding, so that we have a Frith frame $(L, S)$, where $S$ is such
  that $e[S] = R_{(K, \, \cE)}$. Then, the embedding $e$ extends to a
  dense extremal epimorphism $\gamma_{(K, \,\cE)}: (\cC_SL, \cE_S)
  \twoheadrightarrow (K, \cE)$ of quasi-uniform frames.
\end{proposition}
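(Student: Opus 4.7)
The plan is to construct $\gamma_{(K,\cE)}$ using the universal property of $\cC_S L$ provided by Proposition~\ref{p:10}, and then verify in order that it is a quasi-uniform morphism, an extremal epimorphism, and dense. First, I would confirm that $(L,S)$ really is a Frith frame: by Proposition~\ref{p:8}, $R_{(K,\cE)}$ is a bounded sublattice of $K$, so the subframe it generates coincides with the set of arbitrary joins of its elements, which means that $S$ (the copy of $R_{(K,\cE)}$ inside $L$) is join-dense in $L$. Since $e[S] = R_{(K,\cE)}$ consists of complemented elements of $K$, Proposition~\ref{p:10} yields a unique frame homomorphism $\gamma_{(K,\cE)}:\cC_SL\to K$ extending $e$, with $\gamma_{(K,\cE)}(\nabla_a)=e(a)$ and $\gamma_{(K,\cE)}(\Delta_s)=e(s)^*$ (the latter because $\nabla_s$ and $\Delta_s$ are complementary in $\cC_SL$, and frame homomorphisms preserve complements).

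To show that $\gamma_{(K,\cE)}$ is an extremal epimorphism of quasi-uniform frames, I would combine three observations. Since each $\nabla_s$ is complemented in $\cC_SL$, Lemma~\ref{l:11} gives $(\gamma_{(K,\cE)}\oplus\gamma_{(K,\cE)})(E_{\nabla_s}) = E_{e(s)}$, which lies in $\cE$ because $e(s)\in R_{(K,\cE)}$; as the $E_{\nabla_s}$ form a subbasis for $\cE_S$, it follows that $(\gamma_{(K,\cE)}\oplus\gamma_{(K,\cE)})[\cE_S]\subseteq\cE$, so $\gamma_{(K,\cE)}$ is a quasi-uniform morphism. The image of $\gamma_{(K,\cE)}$ is a subframe of $K$ containing both $R_{(K,\cE)}$ (as $\nabla$-images) and $R_{(K,\cE)}^*$ (as $\Delta$-images); by Theorem~\ref{t:1} applied to $\cE=\cE_{R_{(K,\cE)}}$ (Proposition~\ref{p:8}), these two sets together generate $K$, so $\gamma_{(K,\cE)}$ is onto. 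Since the entourages $\{E_{e(s)}\mid s\in S\}=\{E_r\mid r\in R_{(K,\cE)}\}$ also subbasis-generate $\cE$, this is precisely the quasi-uniformity generated by $(\gamma_{(K,\cE)}\oplus\gamma_{(K,\cE)})[\cE_S]$, giving extremality.

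The main obstacle is density, and here I would rely on a normal form for elements of $\cC_S L$. Using $\nabla_a\wedge\nabla_b=\nabla_{a\wedge b}$, $\Delta_s\wedge\Delta_t=\Delta_{s\vee t}$ (from Lemma~\ref{basic}), and closure of $S$ under binary joins, every finite meet of generators of $\cC_SL$ reduces to some $\nabla_a\wedge\Delta_s$ with $a\in L$ and $s\in S$, and this shape is preserved under further finite meets of joins; therefore every $\alpha\in\cC_SL$ can be written as $\alpha=\bigvee_i(\nabla_{a_i}\wedge\Delta_{s_i})$. Applying $\gamma_{(K,\cE)}$ gives $\gamma_{(K,\cE)}(\alpha)=\bigvee_i(e(a_i)\wedge e(s_i)^*)$, so $\gamma_{(K,\cE)}(\alpha)=0$ forces $e(a_i)\le e(s_i)$ for each $i$ (using complementedness of $e(s_i)$), and injectivity of $e$ yields $a_i\le s_i$ in $L$. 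Consequently $\nabla_{a_i}\le\nabla_{s_i}$, and since $\nabla_{s_i}\wedge\Delta_{s_i}=\id_L$ (the bottom of $\cC_SL$), we conclude $\nabla_{a_i}\wedge\Delta_{s_i}=0$ for every $i$, whence $\alpha=0$. This establishes density and finishes the argument.
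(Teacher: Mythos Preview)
Your proposal is correct and follows essentially the same approach as the paper: both use Proposition~\ref{p:10} to extend $e$, Lemma~\ref{l:11} together with Proposition~\ref{p:8} and Theorem~\ref{t:1} to verify the quasi-uniform extremal epimorphism property, and then check density on the generators $\nabla\wedge\Delta$ via complementedness of $e(s)$ and injectivity of $e$. The only cosmetic difference is that the paper works with generators $\nabla_{s_1}\wedge\Delta_{s_2}$ for $s_1,s_2\in S$ (using that $S$ is join-dense in $L$), whereas you allow $\nabla_a\wedge\Delta_s$ with $a\in L$; both normal forms yield the same argument.
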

\begin{proof}
  Since the elements of~$e[S] = R_{(K,\, \cE)}$ are complemented
  in~$K$, by Proposition~\ref{p:10}, the embedding $e : L
  \hookrightarrow K$ uniquely extends to a frame homomorphism
  $\gamma_{(K, \,\cE)}: \cC_SL \to K$. Moreover, $\gamma_{(K, \,\cE)}$
  is surjective: since frame homomorphisms preserve complemented
  elements, we have $R_{(K,\, \cE)} \cup R_{(K,\, \cE)}^* =
  \gamma_{(K, \,\cE)}\ [\{\nabla_s, \Delta_s\}_{s \in S}]$ and, by
  Theorem~\ref{t:1} and Proposition~\ref{p:8}, it follows that
  \[K = \langle R_{(K,\, \cE)} \cup R_{(K,\, \cE)}^* \rangle_\Frm =
    \gamma_{(K, \,\cE)}[\cC_SL].\]
  Since, by Lemma~\ref{l:11}, we have $(\gamma_{(K, \,\cE)} \oplus
  \gamma_{(K, \,\cE)})(E_{\nabla_s}) = E_{ \gamma_{(K, \,\cE)}(s)} =
  E_{e(s)}$ for every $s \in S$, and by Proposition~\ref{p:8}, $\cE =
  \cE_{R_{(K,\, \cE)}}$, we may conclude that $\gamma_{(K, \,\cE)}$
  induces a quasi-uniform homomorphism $\gamma_{(K, \,\cE)}: (\cC_SL,
  \cE_S) \twoheadrightarrow (M, \cE)$, which is an extremal
  epimorphism.  It remains to show that $\gamma_{(K, \, \cE)}$ is
  dense. Let $s_1, s_2 \in S$ be such that $\gamma_{(K, \,
    \cE)}(\nabla_{s_1} \wedge \Delta_{s_2}) = 0$, or equivalently,
  $e(s_1) \wedge e(s_2)^* = 0$. Since $e(s_2) \in R_{(K, \, \cE)}$ is
  complemented, by~\eqref{eq:5}, it follows that $e(s_1) \leq
  e(s_2)$. Since $e$ is an embedding, we have $s_1 \leq s_2$ and we
  obtain $\nabla_{s_1} \wedge \Delta_{s_2} = 0$ as required.
\end{proof}

Finally, we may use Proposition~\ref{p:18} to show that $E: \ffrm
\hookrightarrow \qunifrm_{\rm trans,\, tot\, bd}$ is a coreflection.
\begin{theorem}\label{t:4}
  The category of Frith frames is coreflective in the category of
  transitive and totally bounded quasi-uniform frames.
\end{theorem}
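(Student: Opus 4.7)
The plan is to exhibit the full embedding $E$ of Proposition~\ref{p:9} as a left adjoint, with counit at $(K,\cE)$ given by the dense extremal epimorphism $\gamma_{(K,\cE)}$ constructed in Proposition~\ref{p:18}. On objects, I define the would-be right adjoint $G\colon \qunifrm_{\rm trans,\, tot\, bd} \to \ffrm$ by $G(K,\cE) := (L,S)$, with $(L,S)$ as in Proposition~\ref{p:18}, so that $E(G(K,\cE)) = (\cC_S L,\,\cE_S)$ and $\gamma_{(K,\cE)}\colon E(G(K,\cE)) \twoheadrightarrow (K,\cE)$ is a morphism in $\qunifrm_{\rm trans,\, tot\, bd}$.

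The core of the argument is verifying the universal property: for every Frith frame $(M,T)$ and every quasi-uniform morphism $f\colon E(M,T) \to (K,\cE)$, there exists a unique morphism $\tilde f\colon (M,T) \to G(K,\cE)$ of Frith frames with $\gamma_{(K,\cE)} \circ E(\tilde f) = f$. To construct $\tilde f$, I would first observe that for each $t \in T$ the quasi-uniform condition gives $(f\oplus f)(E_{\nabla_t}) \in \cE$; since $\nabla_t$ is complemented in $\cC_T M$, Lemma~\ref{l:11} yields $(f\oplus f)(E_{\nabla_t}) = E_{f(\nabla_t)}$, so $f(\nabla_t) \in R_{(K,\cE)} = e[S]$ by Definition~\ref{sec:rke}. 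Identifying $M$ with the subframe $\nabla[M] \subseteq \cC_T M$, the restriction $f|_M\colon M \to K$ sends $T$ into $e[S]$; because $T$ join-generates $M$, the image of $f|_M$ lies in the subframe of $K$ generated by $e[S]$, which is precisely $L$. Hence $f|_M$ factors uniquely as $M \xrightarrow{\tilde f} L \hookrightarrow K$, and $\tilde f$ restricts to a lattice map $T \to S$, yielding the desired morphism of Frith frames.

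To check commutativity, both $\gamma_{(K,\cE)} \circ E(\tilde f)$ and $f$ are frame homomorphisms $\cC_T M \to K$ that send each $\nabla_t$ (for $t \in T$) to a complemented element. By the uniqueness clause of Proposition~\ref{p:10}, it suffices to verify that they agree on $\nabla[M]$. For $a \in M$, Corollary~\ref{c:1} gives $E(\tilde f)(\nabla_a) = \nabla_{\tilde f(a)}$, and then $\gamma_{(K,\cE)}(\nabla_{\tilde f(a)}) = e(\tilde f(a)) = f|_M(a) = f(\nabla_a)$, as needed. Uniqueness of $\tilde f$ is immediate: any other candidate $g$ must satisfy $e(g(a)) = \gamma_{(K,\cE)}(\nabla_{g(a)}) = f(\nabla_a) = e(\tilde f(a))$, and injectivity of $e$ forces $g = \tilde f$.

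With the universal property established, $G$ extends to a functor in the standard way and yields $E \dashv G$, so $\ffrm$ is coreflective in $\qunifrm_{\rm trans,\, tot\, bd}$; that the inclusion is full was already Proposition~\ref{p:9}. I do not anticipate any serious obstacle: the heavy lifting has been done in Proposition~\ref{p:18} and Lemma~\ref{l:11}, and the only real calculation is checking that the generators $\nabla_t$ of $\cC_T M$ are sent by $f$ into $R_{(K,\cE)}$ and then invoking the extension property of the congruence frame to pin down $\tilde f$ uniquely.
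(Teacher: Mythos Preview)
Your proposal is correct and follows essentially the same route as the paper: both verify the universal property of $\gamma_{(K,\cE)}$ by first using the quasi-uniform condition to see that $f(\nabla_t)\in R_{(K,\cE)}$ (you via Lemma~\ref{l:11}, the paper via Proposition~\ref{p:3}\ref{item:18}, which amounts to the same), then factor $f|_{\nabla[M]}$ through $e\colon L\hookrightarrow K$ to obtain the unique Frith-frame morphism, and finally check commutativity on the generators of $\cC_T M$. The only cosmetic difference is that you invoke the uniqueness clause of Proposition~\ref{p:10} to conclude commutativity, whereas the paper verifies the equality on the $\Delta_t$'s by hand.
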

\begin{proof} For a transitive and totally bounded quasi-uniform frame
  $(K, \cE)$, we let $\gamma_{(K, \cE)} : (\cC_SL,
  \cE_S)\twoheadrightarrow (K, \cE)$ be the dense extremal epimorphism
  of Proposition~\ref{p:18}, and $e: L \hookrightarrow K$ denote the
  embedding of~$L$ into~$K$. To show that $E: \ffrm \hookrightarrow
  \qunifrm_{\rm trans,\, tot\, bd}$ is a coreflection, it suffices to
  show that, for every Frith frame $(M, T)$ and every quasi-uniform
  homomorphism $h: (\cC_TM, \cE_T) \to (K, \cE)$, there exists a
  unique morphism of Frith frames $g: (M, T) \to (L, S)$ such that the
  following diagram commutes:
    \begin{center}
      \begin{tikzpicture}
      [->, node distance = 30mm] \node (A) {$(\cC_SL, \cE_S)$};
      \node[right of = A] (B) {$(K, \cE)$}; \node[above of = B, yshift
      = -10mm] (C) {$(\cC_TM, \cE_T)$};
      \draw[->>] (A) to node[below, ArrowNode] {$\gamma_{(K, \cE)}$}
      (B); \draw (C) to node[right, ArrowNode] {$h$} (B);
      \draw[dashed] (C) to node[left, yshift = 2mm, ArrowNode] {$Eg =:
        \overline{g}$}(A);
    \end{tikzpicture}
  \end{center}
  If such a morphism~$g$ exists then, in particular, we must have
  $\gamma_{(K, \cE)}\circ \overline{g}(\nabla_a) = h(\nabla_a)$ for
  every $a \in M$. Since $\overline{g}$ and $\gamma_{(K, \cE)}$ are
  extensions of $g$ and $e$, respectively, this amounts to having $e
  \circ g (a) = h(\nabla_a)$ for every $a \in M$. Since by
  Proposition~\ref{p:3}\ref{item:18}, we have $h[T] \subseteq R_{(K,
    \cE)}$ and thus, $h[M] \subseteq \langle R_{(K, \cE)}\rangle_\Frm
  = e[L]$ and $e$ is an embedding, there is a unique morphism of Frith
  frames $g: (M, T) \to (L, S)$ satisfying $e \circ g (a) =
  h(\nabla_a)$ for every $a \in M$. Finally, to see that
  $\overline{g}$ makes the above diagram commute, it suffices to
  observe that, for every $t \in T$, the following equalities hold:
  \[\gamma_{(K, \cE)} \circ \overline{g}(\Delta_t) = \gamma_{(K,
      \cE)} (\Delta_{g(t)}) \stackrel{\tiny (*)}{=} (e\circ g(t))^* = h(\nabla_t)^*=
    h(\Delta_t),\]
  where the equality marked by~$(*)$ holds because $g(t)$ belongs
  to~$S$.
\end{proof}
Note that, unlike what happens in the point-set framework, where we
have an equivalence between Pervin spaces and transitive and totally
bounded quasi-uniform spaces (see~\cite{pin17}), the categories of
Frith frames and of transitive and totally bounded quasi-uniform
frames are not equivalent. This is because, in general, the dense
extremal epimorphism $\gamma_{(K,\, \cE)}: (\cC_SL, \cE_S)
\twoheadrightarrow (K, \cE)$ from Proposition~\ref{p:18} is not an
isomorphism.

\begin{example}
  Let $X$ be a topological space such that the congruence frame of
  its frame of opens is not spatial
  (see~\cite[Theorem~3.4]{niefield87} for a characterization of the
  frames whose congruence frame is not spatial).  We let $R$ be the
  frame of opens of~$X$ and $K$ be the frame of opens of its Skula
  topology (that is, the topology generated by the elements of $R$ and
  their complements).  Then, the underlying map of $\gamma_{(K,\,
    \cE_R)}: (\cC R, \cE_R) \twoheadrightarrow (K, \cE)$ is the unique
  frame homomorphism extending $R \hookrightarrow K$.  This cannot be
  an isomorphism because $K$ is spatial and $\cC R$ is not.
  A concrete example is given by taking for $X$ the real line $\mathbb
  R$ equipped with the Euclidean topology: since the Booleanization of
  $\Omega(\mathbb R)$ is a pointless nontrivial sublocale, by the
  characterization of~\cite{niefield87}, its congruence frame is not
  spatial. Note that this example illustrates a wider class of
  transitive and totally bounded quasi-uniform frames that are not in
  the image of~$E$, namely, those of the form $(K, \cE_R)$ where $K$
  and $R$ as in Theorem~\ref{t:1} are such that $R$ is a subframe of
  $K$ and $K$ is not isomorphic to the congruence frame of~$R$
  (see~\cite{MR4074752} for a characterization of those).\footnote{For
    the reader who is familiar with biframes, this is the same as
    saying that every strictly 0-dimensional biframe $(K_0, K_1, K_2)$
    that is not the congruence biframe of its first part gives rise to
    such an example.}
\end{example}

On the other hand, in the case where~$(K, \cE)$ is such that the
  elements of $S = R_{(K,\, \cE)}$ are complemented in~$L$, one may
  easily check that $\gamma_{(K, \, \cE)}$ is an isomorphism. As we
shall see in the next section, this happens if $\cE$ is a
uniformity~(cf. Proposition~\ref{l:9}) and so, the category of
transitive and totally bounded uniform frames may be nicely
represented by a suitable full subcategory of Frith frames
(cf. Corollary~\ref{c:5}). A similar conclusion may be taken if we
restrict to those transitive and totally bounded quasi-uniform frames
that are complete (cf. Corollary~\ref{c:8}): if $(K, \cE)$ is
complete, then $\gamma_{(K, \cE)}$ has to be an isomorphism.
%


\section{Symmetric Frith frames and uniform frames}\label{sec:4}

We say that a Frith frame $(L,B)$ is \emph{symmetric} if $B$ is a
Boolean algebra, and we let $\sffrm$ denote the full subcategory of
$\ffrm$ whose objects are the symmetric Frith frames. The relevance of
symmetric Frith frames lies in the fact that they exactly capture
those transitive and totally bounded quasi-uniform frames that are
actually uniform.

\begin{proposition}\label{l:9}
  Let $(K, \cE)$ be a transitive and totally bounded quasi-uniform
  frame. Then, $\cE$ is a uniformity if and only if $R_{(K,\, \cE)}$
  is a Boolean algebra. In particular, for every Frith frame $(L, S)$,
  we have that $(L, S)$ is symmetric if and only if $(\cC_SL, \cE_S)$
  is a uniform frame.
\end{proposition}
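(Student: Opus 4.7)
The plan is to reduce everything to the elementary observation that for every complemented element $r$ of a frame $K$, the entourage $E_r = (r \oplus 1) \vee (1 \oplus r^*)$ satisfies $E_r^{-1} = E_{r^*}$. This follows directly from the description of $E_r$ as $\{(x, y) \in K \times K \mid x \leq r \text{ or } y \leq r^*\}$ (modulo pairs involving $0$), since swapping coordinates turns it into $\{(x,y)\mid x\le r^* \text{ or } y\le r\}=E_{r^*}$.

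For the main equivalence, recall from Proposition~\ref{p:8} that $R_{(K,\,\cE)}$ is a bounded sublattice of $K$ consisting of complemented elements and that $\cE = \cE_{R_{(K,\,\cE)}}$. Thus $R_{(K,\,\cE)}$ is a Boolean algebra precisely when it is closed under the complement operation of $K$. If $\cE$ is a uniformity and $r \in R_{(K,\,\cE)}$, then $E_{r^*} = E_r^{-1} \in \cE$, so $r^* \in R_{(K,\,\cE)}$. Conversely, if $R_{(K,\,\cE)}$ is closed under complements, then any $E \in \cE$ contains a finite intersection $\bigcap_{i=1}^n E_{r_i}$ with $r_i \in R_{(K,\,\cE)}$, so
\[E^{-1} \supseteq \bigcap_{i=1}^n E_{r_i}^{-1}=\bigcap_{i=1}^n E_{r_i^*} \in \cE,\]
because each $r_i^*$ again belongs to $R_{(K,\,\cE)}$; hence $\cE$ is closed under inverses and is a uniformity.

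For the ``in particular'' part, I will identify $R_{(\cC_SL,\,\cE_S)}$ with $\nabla[S]$ and then transport Booleanness along the lattice isomorphism $\nabla\colon S \to \nabla[S]$. The inclusion $\nabla[S] \subseteq R_{(\cC_SL,\,\cE_S)}$ is immediate because each $\nabla_s$ is complemented in $\cC_SL$ (with complement $\Delta_s$) and $E_{\nabla_s} \in \cE_S$ by definition. For the reverse inclusion, both $\nabla[S]$ (since $\nabla\colon L \hookrightarrow \cC_SL$ is a frame homomorphism and $S$ is a sublattice of $L$) and $R_{(\cC_SL,\,\cE_S)}$ (by Proposition~\ref{p:8}) are bounded sublattices of complemented elements of $\cC_SL$, and both generate the filter $\cE_S$; hence Lemma~\ref{l:1} forces them to coincide. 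Since the complement of $\nabla_s$ in $\cC_SL$ is $\Delta_s$, the sublattice $\nabla[S]$ is Boolean exactly when $\Delta_s \in \nabla[S]$ for every $s \in S$, that is, when every $s \in S$ admits a complement in $L$ lying in $S$. This is precisely the symmetry of $(L,S)$, and the conclusion then follows from the main equivalence applied to $(\cC_SL,\,\cE_S)$.

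The only delicate point is the identification $R_{(\cC_SL,\,\cE_S)} = \nabla[S]$: a priori there could be ``exotic'' complemented elements of $\cC_SL$ obtained as Boolean combinations of the $\nabla_s$ and $\Delta_s$ that still produce entourages already in $\cE_S$. Lemma~\ref{l:1} (which ultimately rests on Lemma~\ref{l:5}) is exactly what rules this out; after that the argument is a bookkeeping exercise around the identity $E_r^{-1} = E_{r^*}$.
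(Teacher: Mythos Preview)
Your proof is correct and follows essentially the same approach as the paper: both arguments rest on the identity $E_r^{-1}=E_{r^*}$ together with Proposition~\ref{p:8}, and both use Lemma~\ref{l:1} to identify $R_{(\cC_SL,\,\cE_S)}$ with $\{\nabla_s\mid s\in S\}$ before transferring Booleanness along the lattice isomorphism $S\cong\nabla[S]$. You simply spell out in more detail what the paper calls ``straightforward,'' and you unpack the Booleanness transfer via the condition $\Delta_s\in\nabla[S]$ rather than appealing directly to the isomorphism, but the structure of the argument is the same.
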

\begin{proof}
  The first claim is a straightforward consequence of
  Proposition~\ref{p:8} and of the equality $E_{r}^{-1} = E_{r^*}$,
  which holds whenever $r$ is complemented. For the second statement,
  we only need to observe that, by Proposition~\ref{p:8}, we have
  $\cE_S = \cE_{R_{(\cC_SL,\, \cE_S)}}$ and thus, by Lemma~\ref{l:1},
  we have $R_{(\cC_SL,\, \cE_S)}= \{\nabla_s \mid s \in S\}$ which, by
  Lemma~\ref{basic}, is a lattice isomorphic to $S$. Therefore, by the
  first claim, $(\cC_SL, \cE_S)$ is a uniform frame if and only if $S$
  is a Boolean algebra. 
\end{proof}
In the following, we let $\unifrm_{\rm trans,\, tot\, bd}$ denote the
full subcategory of $\qunifrm$ formed by the transitive and totally
bounded uniform frames.
\begin{corollary}\label{c:5}
  The coreflection $E: \ffrm \hookrightarrow\qunifrm_{\rm trans,\,
    tot\, bd}$ restricts and co-restricts to an isomorphism $E':
  \sffrm \hookrightarrow \unifrm_{\rm trans,\, tot\, bd}$.
\end{corollary}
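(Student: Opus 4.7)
The plan is to combine Propositions~\ref{p:9}, \ref{l:9}, and~\ref{p:18} with the observation that $\cC_SL$ collapses to $L$ whenever the elements of $S$ are complemented in $L$. First, I will verify that $E$ does restrict and co-restrict to a functor $E'\colon\sffrm\to\unifrm_{\rm trans,\,tot\,bd}$: Proposition~\ref{l:9} asserts directly that $E(L,S)=(\cC_SL,\cE_S)$ is uniform precisely when $(L,S)$ is symmetric, so this step is immediate. Since $E$ is a full embedding by Proposition~\ref{p:9}, the restriction $E'$ is automatically fully faithful, and it remains to establish essential surjectivity.

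For essential surjectivity, fix $(K,\cE)\in\unifrm_{\rm trans,\,tot\,bd}$. By Proposition~\ref{p:8}, $\cE=\cE_{R_{(K,\cE)}}$ and $R_{(K,\cE)}$ is a sublattice of $K$; by Proposition~\ref{l:9}, $R_{(K,\cE)}$ is a Boolean algebra because $\cE$ is a uniformity. Proposition~\ref{p:18} produces a Frith frame $(L,S)$ with $L=\langle R_{(K,\cE)}\rangle_\Frm$, an embedding $e\colon L\hookrightarrow K$ with $e[S]=R_{(K,\cE)}$, and a dense extremal epimorphism $\gamma_{(K,\cE)}\colon(\cC_SL,\cE_S)\twoheadrightarrow(K,\cE)$. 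Since $S\cong R_{(K,\cE)}$ is Boolean, $(L,S)$ is symmetric, so $(L,S)\in\sffrm$; the only thing left is to show that $\gamma_{(K,\cE)}$ is an isomorphism.

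The key technical observation will be that for every $s\in S$, if $s^*\in S$ denotes its complement in the Boolean algebra $R_{(K,\cE)}$ (transported to $L$ via $e$), then $s^*$ is also a complement of $s$ in $L$, and consequently the open and closed congruences satisfy $\Delta_s=\nabla_{s^*}$ in $\cC L$. This equality is a short direct computation using $s\vee s^*=1$, $s\wedge s^*=0$, and frame distributivity. As a result, every generator of $\cC_SL$ of the form $\Delta_s$ already lies in $\nabla[L]$, so $\cC_SL=\nabla[L]\cong L$. Under this identification $\gamma_{(K,\cE)}$ becomes the inclusion $e\colon L\hookrightarrow K$, and since $\gamma_{(K,\cE)}$ is surjective (Proposition~\ref{p:18}) while $e$ is injective, $e$ is an isomorphism of frames. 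A direct check that $(e\oplus e)[\cE_S]=\cE$ (using Proposition~\ref{p:3} and the equality $\cE=\cE_{R_{(K,\cE)}}$) then upgrades this to an isomorphism of quasi-uniform frames, giving $E'(L,S)\cong(K,\cE)$.

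The main obstacle will be the step showing $\cC_SL\cong L$ when the elements of $S$ are complemented in $L$; all the other components are essentially bookkeeping on top of results already proved in Sections~\ref{sec:2} and~\ref{sec:4}. The subtle point there is to confirm that the complement of $s$ computed inside the Boolean sublattice $R_{(K,\cE)}$ genuinely is a (frame-theoretic) complement in the larger frame $L$, which follows because the defining equations $s\wedge s^*=0$ and $s\vee s^*=1$ are preserved by the embedding of $R_{(K,\cE)}$ into $L$.
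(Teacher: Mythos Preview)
Your argument is correct. The route differs somewhat from the paper's: the paper bypasses Proposition~\ref{p:18} entirely and instead defines an explicit inverse functor $\gamma'\colon \unifrm_{\rm trans,\,tot\,bd}\to\sffrm$ by $\gamma'(K,\cE)=(K,R_{(K,\cE)})$. The key observation there is that when $R_{(K,\cE)}$ is Boolean we have $R_{(K,\cE)}^*=R_{(K,\cE)}$, so Theorem~\ref{t:1} together with Proposition~\ref{p:8} gives $K=\langle R_{(K,\cE)}\rangle_\Frm$ directly; hence $(K,R_{(K,\cE)})$ is already a symmetric Frith frame, and one checks that $E'$ and $\gamma'$ are mutually inverse. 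Your approach instead takes the counit $\gamma_{(K,\cE)}$ from the coreflection and shows it is an isomorphism via the collapse $\cC_SL\cong L$, which is precisely the mechanism the paper alludes to in the paragraph immediately preceding Section~\ref{sec:4}. Both reach the same endpoint; the paper's version is shorter because it never needs to pass through $\cC_SL$ at all.

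One small point: proving that $E'$ is fully faithful and essentially surjective yields an \emph{equivalence}, whereas the statement asserts an \emph{isomorphism} of categories. Your argument in fact contains what is needed---you show $L=\langle R_{(K,\cE)}\rangle_\Frm=K$ via $e$, so the object you produce is literally $(K,R_{(K,\cE)})$, the same as the paper's $\gamma'(K,\cE)$---but as written you stop at ``$E'(L,S)\cong(K,\cE)$''. To match the stated conclusion you should either phrase the construction as an inverse functor, or note explicitly that (after identifying $\cC_BL$ with $L$ via $\nabla$ for symmetric $(L,B)$) the assignment $(K,\cE)\mapsto(K,R_{(K,\cE)})$ is a strict two-sided inverse to $E'$.
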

\begin{proof}
  By Proposition~\ref{l:9}, the functor $E$ restricts and co-restricts
  to a functor $E': \sffrm \hookrightarrow \unifrm_{\rm trans,\, tot\,
    bd}$. Let $(K, \cE)$ be a transitive and totally bounded uniform
  frame. By Proposition~\ref{l:9}, $R_{(K, \cE)}$ is a Boolean algebra
  and so, by Theorem~\ref{t:1} and Proposition~\ref{p:8}, $K$ is
  generated, as a frame, by $R_{(K, \cE)}$. Therefore, the pair $(K,
  R_{(K, \cE)})$ is a symmetric Frith frame. Moreover, by
  Propositions~\ref{p:3}\ref{item:18} and~\ref{p:8}, if $h:(K, \cE)
  \to (M, \cF)$ is a homomorphism of transitive and totally bounded
  uniform frames, then $h$ induces a morphism of (symmetric) Frith
  frames $h: (K, R_{(K, \cE)}) \to (M, R_{(M, \cF)})$. Hence, the
  assignment $(K, \cE) \mapsto (K, R_{(K, \cE)})$ yields a
  well-defined functor $\gamma': \unifrm_{\rm trans,\, tot\, bd} \to
  \sffrm$. Finally, one can easily show that $E'$ and $\gamma'$ are
  mutually inverse.
\end{proof}

We will now show that the category $\sffrm$ is both reflective and
coreflective in $\ffrm$.
Let us start by coreflectivity. 

\begin{proposition}\label{p:19}
  The category $\sffrm$ is a full coreflective subcategory of $\ffrm$.
\end{proposition}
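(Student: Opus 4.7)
The plan is to construct a functor right adjoint to the full embedding $\iota:\sffrm\inclu\ffrm$. Given a Frith frame $(M,T)$, I would let $B(T)\se T$ denote the set of elements of $T$ that are complemented within the bounded distributive lattice~$T$. Distributivity of~$T$ guarantees that $B(T)$ is a Boolean subalgebra: $0,1\in B(T)$, the complements of meets/joins inside~$T$ are joins/meets of the respective complements, and complementation in a distributive lattice is involutive. Letting $K:=\langle B(T)\rangle_\Frm$ denote the subframe of~$M$ generated by~$B(T)$, the fact that $B(T)$ is closed under finite meets implies that every element of~$K$ is a join of elements of~$B(T)$, so $(K,B(T))$ is a symmetric Frith frame. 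I would define the coreflector on objects by $(M,T)\mapsto(K,B(T))$.

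For functoriality, I would observe that any Frith morphism $h:(M,T)\ra(M',T')$ sends $B(T)$ into $B(T')$: if $t$ has complement $t^*$ in~$T$, then $h(t)\we h(t^*)=0$ and $h(t)\ve h(t^*)=1$ with both $h(t),h(t^*)\in T'$, so $h(t)$ is complemented in~$T'$. Hence $h$ corestricts to a frame homomorphism between the generated subframes, giving the action of the functor on morphisms. The counit at $(M,T)$ would then be the subframe inclusion $\epsilon_{(M,T)}:(K,B(T))\inclu(M,T)$, which is automatically a morphism of Frith frames since $B(T)\se T$.

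For the universal property, consider a symmetric Frith frame $(L,B)$ and a Frith morphism $h:(L,B)\ra(M,T)$. Each $b\in B$ has a complement $b^*\in B$ (since $B$ is Boolean), and because frame homomorphisms preserve complements, $h(b^*)$ is the complement of $h(b)$ in~$M$; as both $h(b)$ and $h(b^*)$ lie in~$T$, we get $h(b)\in B(T)$. Since $B$ join-generates~$L$ and $h$ preserves joins, it follows that $h(L)\se K$, so $h$ corestricts uniquely to a morphism $\widetilde h:(L,B)\ra(K,B(T))$ of symmetric Frith frames with $\epsilon_{(M,T)}\circ\widetilde h=h$. I do not anticipate a genuine obstacle here: the only conceptual point is recognising that the \emph{coreflection} shrinks rather than enlarges the distinguished lattice, by carving out the complemented part of~$T$ and restricting to the subframe it generates. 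Once this construction is identified, the remaining verifications rely only on distributivity of~$T$, preservation of complements by frame homomorphisms, and the join-density built into the definition of a Frith frame.
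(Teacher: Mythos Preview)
Your proposal is correct and follows essentially the same approach as the paper: both carve out the Boolean subalgebra of elements of the distinguished lattice that are complemented \emph{within that lattice}, take the subframe it generates, and verify the universal property using that Frith morphisms send complemented pairs in the source lattice to complemented pairs in the target lattice. The only cosmetic difference is that you spell out functoriality of the coreflector explicitly, whereas the paper omits this and checks the counit's universal property directly.
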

\begin{proof}
  Let $(L,S)$ be a Frith frame. We consider the subset $C \subseteq S$
  consisting of those elements having a complement in $S$, and we let
  $N$ be the subframe of~$L$ generated by~$C$. Observe that $C$ is a
  Boolean subalgebra of $N$, so that we have a symmetric Frith frame
  $b(L, S) := (N, C)$. Moreover, the embedding $N \hookrightarrow L$
  defines a homomorphism of Frith frames $e_{(L, S)}:b(L,
  S)\hookrightarrow (L,S)$. To complete the proof, we only need to
  show that, for every symmetric Frith frame $(M,B)$ and for every
  morphism $h:(M,B)\ra (L,S)$ there is a unique morphism
  $\widetilde{h}:(M,B)\ra b(L, S)$ making the following diagram
  commute:
  \begin{center}
    \begin{tikzpicture}
      [->, node distance = 30mm] \node (A) {$b(L, S)$}; \node[right of
      = A] (B) {$(L, S)$}; \node[above of = B, yshift = -10mm] (C) {$(M, B)$};
      \draw[right hook->] (A) to node[below, ArrowNode] {$e_{(L, S)}$}
      (B); \draw (C) to node[right, ArrowNode] {$h$} (B);
      \draw[dashed] (C) to node[left, yshift = 2mm, ArrowNode]
      {$\widetilde h$}(A);
    \end{tikzpicture}
  \end{center}
  That is, we need to show that $h$ co-restricts to a morphism of
  Frith frames $\widetilde h: (M, B) \to b(L, S)$. This is indeed the
  case because, since frame homomorphisms preserve pairs of
  complemented elements, $h[B] \subseteq S$, and $B$ is a Boolean
  algebra, every element of $h[B]$ is complemented in~$S$, that is, we
  have $h[B] \subseteq C$.
\end{proof}
Combining Theorem~\ref{t:4}, Corollary~\ref{c:5}, and
Proposition~\ref{p:19}, we also have the following:
\begin{corollary}
  The category $\unifrm_{\rm trans,\, tot\, bd}$ is coreflective in
  $\qunifrm_{\rm trans,\, tot\, bd}$.
\end{corollary}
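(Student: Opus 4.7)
The plan is to obtain this coreflectivity formally by composing adjunctions already established in the preceding results, and then transporting along the isomorphism $E'$ of Corollary~\ref{c:5}. The key observation is that the inclusion $j:\unifrm_{\rm trans,\, tot\, bd} \hookrightarrow \qunifrm_{\rm trans,\, tot\, bd}$ coincides, up to the isomorphism $E'$, with the composite of two coreflective embeddings that we already control.

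First, I would set up the relevant diagram of functors
\[
\sffrm \xrightarrow{\ i\ } \ffrm \xrightarrow{\ E\ } \qunifrm_{\rm trans,\, tot\, bd},
\]
where $i$ denotes the inclusion (coreflective with right adjoint $b$, by Proposition~\ref{p:19}) and $E$ is the full embedding of Proposition~\ref{p:9} (coreflective with right adjoint $\gamma$, by Theorem~\ref{t:4}). Since the composition of right adjoints is a right adjoint, the functor $b\circ\gamma$ is right adjoint to $E\circ i$.

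Next, I would verify the factorization $E\circ i = j\circ E'$. This is immediate from Corollary~\ref{c:5}, where $E'$ is defined precisely as the restriction and co-restriction of $E$ to symmetric Frith frames and to transitive, totally bounded uniform frames (Proposition~\ref{l:9} ensures that for any symmetric Frith frame $(L,B)$, the quasi-uniform frame $E(L,B) = (\cC_BL, \cE_B)$ is in fact uniform).

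Finally, since $E'$ is an isomorphism of categories with inverse $(E')^{-1}$, we may write $j = (E\circ i)\circ (E')^{-1}$. Composing $(E')^{-1}$ with the adjunction $E\circ i \dashv b\circ\gamma$ yields that $E' \circ b \circ \gamma$ is right adjoint to $j$; explicitly, for $(K,\cE) \in \qunifrm_{\rm trans,\, tot\, bd}$ and $(M,\cF) \in \unifrm_{\rm trans,\, tot\, bd}$, we have
\[
\unifrm_{\rm trans,\, tot\, bd}\bigl(j(M,\cF),(K,\cE)\bigr) \,\cong\, \sffrm\bigl((E')^{-1}(M,\cF), b\circ\gamma(K,\cE)\bigr) \,\cong\, \unifrm_{\rm trans,\, tot\, bd}\bigl((M,\cF), E'\circ b\circ\gamma(K,\cE)\bigr).
\]
This establishes the desired coreflection, with coreflector $E'\circ b\circ\gamma$. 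There is no substantial obstacle here; the argument is essentially a formal manipulation of adjunctions, and the only point requiring attention is the factorization $E\circ i = j\circ E'$, which is built into the construction of $E'$ in Corollary~\ref{c:5}.
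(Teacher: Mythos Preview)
Your argument is correct and follows exactly the approach the paper takes: it simply combines Theorem~\ref{t:4}, Corollary~\ref{c:5}, and Proposition~\ref{p:19}, transporting the coreflectivity of $\sffrm$ in $\ffrm$ across the isomorphism~$E'$ to obtain a right adjoint $E'\circ b\circ\gamma$ to the inclusion~$j$. One small slip: in your displayed hom-set chain, the leftmost hom-set should be taken in $\qunifrm_{\rm trans,\, tot\, bd}$ rather than $\unifrm_{\rm trans,\, tot\, bd}$, since $j(M,\cF)$ and $(K,\cE)$ both live in the larger category.
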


Let us now prove that $\unifrm_{\rm trans,\, tot\, bd}$ is reflective
subcategory of~$\qunifrm_{\rm trans,\, tot\, bd}$. Given a Frith frame $(L,S)$, we let
$\overline S$ denote the sublattice of $\cC_S L$ generated by the
elements of the form $\nabla_s$ together with their complements. As
complemented elements of a frame are closed under finite meets and
finite joins, the lattice $\overline S$ is a Boolean
algebra. Moreover, since $L$ is generated by $S$, we have that
$\cC_SL$ is generated by~$\overline{S}$ and so $(\cC_SL, \overline S)$
is a Frith frame. We may then define a functor $\fsym:\ffrm\ra \sffrm$
as follows. For a Frith frame $(L,S)$ we set $\fsym(L,S) := (\cC_S
L,\overline S)$; and for a morphism of Frith frames $h: (L,S)\ra
(M,T)$ we set $\fsym(h) := \overline h$, where $\overline h$ is the
unique extension of~$h$ to a frame homomorphism $\overline{h}: \cC_SL
\to \cC_TM$ (recall Corollary~\ref{c:1}). In particular, we have
$\overline{h}(\nabla_s)=\nabla_{h(s)}$ and $\overline{h}(\Delta_s)=
\Delta_{h(s)}$ for every $s\in S$ and thus, $\overline{h}: (\cC_SL,
\overline{S}) \to (\cC_TM, \overline{T})$ is a morphism of Frith
frames and $\fsym$ is a well-defined functor. We shall refer to
$\fsym(L, S)$ as the \emph{symmetrization} of $(L, S)$.

\begin{proposition}\label{p:25}
  The full subcategory of symmetric Frith frames is reflective in
  $\ffrm$.
\end{proposition}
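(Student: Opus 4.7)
The plan is to exhibit the unit of the reflection and verify the universal property using the extension result of Proposition~\ref{p:10}. For each Frith frame $(L,S)$, I would take as the reflection morphism $\eta_{(L,S)} \colon (L,S) \to \fsym(L,S) = (\cC_S L, \overline S)$ the canonical embedding $\nabla \colon L \hookrightarrow \cC_S L$, $a \mapsto \nabla_a$. This is a frame homomorphism, and since $\nabla_s \in \overline S$ for every $s \in S$ by definition of $\overline S$, it is a morphism of Frith frames. Naturality in $(L,S)$ is immediate from the description of $\fsym$ on morphisms: for a morphism $h \colon (L,S) \to (M,T)$, the equality $\overline h \circ \nabla = \nabla \circ h$ is one of the commutations of the diagram in Proposition~\ref{p:10}.

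The content of the proof is the universal property. Let $(M,B)$ be a symmetric Frith frame and $h \colon (L,S) \to (M,B)$ a morphism of Frith frames. Since $h[S] \subseteq B$ and $B$ is a Boolean algebra, every $h(s)$ with $s \in S$ is complemented in~$M$. Hence Proposition~\ref{p:10} applies and yields a unique frame homomorphism $\widetilde h \colon \cC_S L \to M$ extending $h$ along $\nabla$, and moreover this $\widetilde h$ satisfies $\widetilde h(\nabla_s) = h(s)$ and $\widetilde h(\Delta_s) = h(s)^*$ for all $s \in S$ (the second equality is a standard consequence of $\Delta_s$ being the complement of $\nabla_s$ in $\cC_S L$ together with the fact that frame homomorphisms preserve complements).

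It then remains to check that $\widetilde h$ is in fact a morphism of Frith frames $(\cC_S L, \overline S) \to (M,B)$, i.e.\ that $\widetilde h[\overline S] \subseteq B$. By definition $\overline S$ is the sublattice of $\cC_S L$ generated by $\{\nabla_s, \Delta_s \mid s \in S\}$; by the previous paragraph $\widetilde h$ sends each of these generators into $\{h(s), h(s)^* \mid s \in S\} \subseteq B$ (using again that $B$ is Boolean), and since $B$ is closed under the lattice operations of~$M$, we conclude $\widetilde h[\overline S] \subseteq B$. Uniqueness of $\widetilde h$ among morphisms of Frith frames factoring $h$ through $\eta_{(L,S)}$ follows from the uniqueness clause of Proposition~\ref{p:10}, since any such factorization must in particular be a frame homomorphism extending $h$ along $\nabla$.

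No real obstacle is anticipated: the construction of $\overline S$ was tailored precisely so that the universal extension supplied by Proposition~\ref{p:10} automatically respects the distinguished sublattices. The only mild subtlety is making sure that $\widetilde h(\Delta_s) = h(s)^*$ lies in $B$, which is exactly where the hypothesis that $(M,B)$ is symmetric (so that $B$ is closed under complements in $M$) is used.
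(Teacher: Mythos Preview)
Your proof is correct and follows essentially the same approach as the paper: both take $\nabla$ as the reflection unit, invoke Proposition~\ref{p:10} to obtain the unique frame extension $\widetilde h$, and then verify $\widetilde h[\overline S]\subseteq B$ via $\widetilde h(\nabla_s)=h(s)$, $\widetilde h(\Delta_s)=h(s)^*$ together with closure of $B$ under complements.
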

\begin{proof}
  We first observe that if $(L,S)$ is a Frith frame, then it embeds in
  its symmetrization via $\nabla: L \hookrightarrow \cC_SL$. Let $(M,
  B)$ be a symmetric Frith frame, and $h:(L,S)\ra (M,B)$ be a morphism
  in $\ffrm$. We need to show that there is a unique morphism
  $\widetilde{h}:(\cC_S L, \overline{S})\ra (M,B)$ making the
  following diagram commute:
   \begin{center}
    \begin{tikzpicture}
      [->, node distance = 30mm]
      \node (A) {$(L, S)$};
      \node[right of = A] (B) {$(\cC_S L, \overline{S})$};
      \node[below of = B, yshift = 10mm] (C) {$(M, B)$};
      \draw[right hook->] (A) to node[above, ArrowNode] {$\nabla$}
      (B); \draw[dashed] (B) to node[right, ArrowNode] {$\widetilde
        h$} (C); \draw (A) to node[left, yshift = -1mm, ArrowNode]
      {$h$}(C);
    \end{tikzpicture}
  \end{center}
  Since $h[S] \subseteq B$ consists of complemented elements of~$M$,
  by Proposition~\ref{p:10}, there is a unique frame map
  $\widetilde{h}:\cC_S L\ra M$ making the above triangle
  commute. Hence, we only need to show that $\widetilde
  h[\,\overline{S}\,] \subseteq B$. This is indeed the case because,
  for every $s \in S$, we have $\widetilde{h}(\nabla_s)=h(s)$ and
  $\widetilde{h}(\Delta_s)=h(s)^*$, and $B$ is closed under taking
  complements.
\end{proof}

We now argue that $\fsym$ is a restriction of the usual reflection
of $\qunifrm$ onto $\unifrm$.

\begin{proposition}\label{p:2}
  The following diagram commutes:
  \begin{center}
    \begin{tikzpicture}
      \node (A) {$\ffrm$}; \node[right of = A, xshift = 30mm] (B)
      {$\sffrm$}; \node[below of = A] (C) {$\qunifrm$}; \node[below of
      = B] (D) {$\unifrm$};
      \draw (A) to node[ArrowNode,above] {$\fsym$} (B); \draw[right
      hook->] (B) to node[ArrowNode,right] {$E$} (D); \draw[right
      hook->] (A) to node[ArrowNode,left] {$E$} (C); \draw (C) to
      node[ArrowNode, below] {$\sym$} (D);
    \end{tikzpicture}
  \end{center}
\end{proposition}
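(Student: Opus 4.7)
The plan is, for an arbitrary Frith frame $(L,S)$, to expand both composites on objects and exhibit a canonical identification between them, then dispatch naturality on morphisms. The key observation is that applying $E$ to the symmetric Frith frame $\fsym(L,S)=(\cC_SL,\overline S)$ causes the outer congruence construction to \emph{collapse}, because $\overline S$ already consists of complemented elements of $\cC_SL$.

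More precisely, I would first note that $\sym\circ E(L,S)=(\cC_SL,\overline{\cE_S})$, where $\overline{\cE_S}$ is the coarsest uniformity on $\cC_SL$ containing $\cE_S$, while $E\circ\fsym(L,S)=(\cC_{\overline S}(\cC_SL),\cE_{\overline S})$. For the frame parts, I would argue that the embedding $\nabla\colon\cC_SL\hookrightarrow\cC(\cC_SL)$ identifies $\cC_SL$ with $\cC_{\overline S}(\cC_SL)$: indeed, every $s\in\overline S$ is complemented in $\cC_SL$, and in $\cC(\cC_SL)$ both $\Delta_s$ and $\nabla_{s^*}$ are the unique complement of $\nabla_s$ (using Lemma~\ref{basic}), so $\Delta_s=\nabla_{s^*}$ lies in the image of $\nabla$; hence the generators of $\cC_{\overline S}(\cC_SL)$ reduce to $\{\nabla_x\mid x\in\cC_SL\}$, which is precisely the image of the embedding $\nabla$.

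Under this identification, $\cE_{\overline S}$ becomes the filter on $\cC_SL\oplus\cC_SL$ generated by $\{E_s\mid s\in\overline S\}$, and I need to check this equals $\overline{\cE_S}$. One inclusion is immediate from Proposition~\ref{l:9}: $\cE_{\overline S}$ is a uniformity containing $\cE_S$, and $\overline{\cE_S}$ is the coarsest such, so $\overline{\cE_S}\subseteq\cE_{\overline S}$. For the reverse inclusion, Lemma~\ref{l:1} lets me replace $\overline S$ by its lattice generators $\{\nabla_s,\Delta_s\mid s\in S\}$; the generators $E_{\nabla_s}$ already live in $\cE_S\subseteq\overline{\cE_S}$, and $E_{\Delta_s}=E_{\nabla_s}^{-1}$ (using that $\Delta_s$ is the complement of $\nabla_s$ in $\cC_SL$, together with $E_r^{-1}=E_{r^*}$ for complemented $r$), so $E_{\Delta_s}\in\overline{\cE_S}$ by definition of the symmetrization.

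The morphism part is routine: for a morphism $h\colon(L,S)\to(M,T)$ of Frith frames, both $E(\fsym(h))$ and $\sym(E(h))$ reduce, under the identifications above and using that $\sym$ is the identity on morphisms, to the unique frame extension $\overline h\colon\cC_SL\to\cC_TM$ furnished by Corollary~\ref{c:1}. The main conceptual step is the collapse $\cC_{\overline S}(\cC_SL)\cong\cC_SL$; once this is in hand, everything else is inverse-closure bookkeeping via Lemma~\ref{l:1} and the identity $E_r^{-1}=E_{r^*}$.
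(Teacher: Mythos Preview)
Your argument is correct and follows the same route as the paper's proof; the main difference is that you spell out explicitly the identification $\cC_{\overline S}(\cC_SL)\cong\cC_SL$ (via the collapse $\Delta_s=\nabla_{s^*}$ for complemented $s$), whereas the paper silently uses this and simply writes $E\circ\fsym(L,S)=(\cC_SL,\cE_{\overline S})$, interpreting $\cE_{\overline S}$ directly as the filter on $\cC_SL\oplus\cC_SL$ generated by $\{E_s\mid s\in\overline S\}$. Once that identification is in place, both proofs reduce the equality $\cE_{\overline S}=\overline{\cE_S}$ to the facts $E_{\nabla_s}^{-1}=E_{\Delta_s}$ and Lemma~\ref{l:1}, and treat morphisms as trivial.
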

\begin{proof}
  Let $(L, S)$ be a Frith frame. By definition, we have
  \[E \circ \fsym(L, S) = (\cC_SL, \cE_{\overline S}) \qquad
    \text{and}\qquad \sym \circ E(L, S) = (\cC_SL,
    \overline{\cE_S}),\]
  where $\overline{\cE_S}$ is the uniformity generated by
  $\{E_{\nabla_s}, E_{\nabla_s}^{-1} \mid s \in S\}$. Since
  $E_{\nabla_s}^{-1} = E_{\Delta_s}$, by definition of~$\overline{S}$
  and by Lemma~\ref{l:1}, $\cE_{\overline S}$ and $\overline{\cE_S}$
  are both the (quasi-)uniformity generated by $\{E_{\nabla_s},
  E_{\Delta_s} \mid s \in S\}$. Commutativity at the level of
  morphisms is trivial.
\end{proof}

Finally, we will see that $\fsym$ is the pointfree analogue of $\psym$
discussed in Section~\ref{sec:pervin}.

\begin{proposition}\label{p:23}
  There is an isomorphism $\alpha_{(L, S)}: \psym\circ
  \pt(L,S)\cong\pt\circ \fsym(L,S)$ for every Frith frame $(L,S)$.
\end{proposition}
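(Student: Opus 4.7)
The plan is to construct $\alpha_{(L,S)}$ as the bijection between the underlying sets $\pt(L)$ and $\pt(\cC_SL)$ induced by the universal property of $\cC_SL$, and then to verify that this bijection identifies the distinguished Boolean sublattices on each side. Unpacking the definitions, we have
\[\psym\circ\pt(L,S)=(\pt(L),\overline{\widehat S})\qquad\text{and}\qquad \pt\circ\fsym(L,S)=(\pt(\cC_SL),\widehat{\overline S}),\]
where $\overline{\widehat S}$ is the Boolean subalgebra of $\cP(\pt(L))$ generated by $\widehat S=\{\widehat s\mid s\in S\}$, and $\overline S$ is the Boolean subalgebra of $\cC_SL$ generated by $\{\nabla_s,\Delta_s\mid s\in S\}$.

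First, I would define $\alpha_{(L,S)}\colon\pt(L)\to\pt(\cC_SL)$ by sending a point $q\colon L\to\two$ to its unique extension $\widetilde q\colon\cC_SL\to\two$, as guaranteed by Proposition~\ref{p:10} (applicable because every $q(s)\in\two$ is trivially complemented). The inverse is precomposition with the frame embedding $\nabla\colon L\hookrightarrow\cC_SL$, that is, $p\mapsto p\circ\nabla$; these assignments are mutually inverse by the uniqueness clause of Proposition~\ref{p:10}, since $\widetilde q\circ\nabla=q$ by construction and conversely $\widetilde{p\circ\nabla}$ and $p$ are two frame homomorphisms $\cC_SL\to\two$ that agree on~$L$.

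Second, I would verify that the bijection identifies the two Boolean lattices. A direct computation shows that for every $s\in S$,
\[\alpha_{(L,S)}(\widehat s)=\{\widetilde q\mid q(s)=1\}=\{p\in\pt(\cC_SL)\mid p(\nabla_s)=1\}=\widehat{\nabla_s},\]
and, since $\Delta_s=\nabla_s^{*}$ by Lemma~\ref{basic}, also $\alpha_{(L,S)}(\widehat s^{\,c})=\widehat{\Delta_s}$. Because $\widehat{(-)}\colon\cC_SL\to\cP(\pt(\cC_SL))$ is a frame homomorphism, it preserves complements of complemented elements, so $\widehat{\overline S}$ coincides with the Boolean subalgebra of $\cP(\pt(\cC_SL))$ generated by $\{\widehat{\nabla_s}\mid s\in S\}$. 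Since $\alpha_{(L,S)}$ is a set-theoretic bijection (and hence preserves all finite intersections, finite unions, and complements of subsets), the image of the generating set $\widehat S$ of $\overline{\widehat S}$ is a generating set of $\widehat{\overline S}$, whence $\alpha_{(L,S)}[\,\overline{\widehat S}\,]=\widehat{\overline S}$. Corollary~\ref{perviniso} then yields that $\alpha_{(L,S)}$ is an isomorphism in $\perv$.

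The only delicate point is the identification of $\widehat{\overline S}$ with the Boolean subalgebra generated by $\{\widehat{\nabla_s}\mid s\in S\}$, which reduces to the standard fact that frame homomorphisms preserve complements of complemented elements. Everything else is routine; moreover, naturality of $\alpha_{(L,S)}$ in $(L,S)$ would follow from the uniqueness clause of Proposition~\ref{p:10}, although only the pointwise statement is claimed.
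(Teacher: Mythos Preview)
Your proof is correct and follows essentially the same approach as the paper: you define $\alpha_{(L,S)}$ via the universal property of $\cC_SL$ (Proposition~\ref{p:10}), invert it by precomposition with $\nabla$, and then verify that the generating sets of the two Boolean sublattices correspond under the bijection, invoking Corollary~\ref{perviniso}. The only cosmetic differences are that the paper computes preimages under $\alpha_{(L,S)}$ rather than forward images, and that you are slightly more explicit about why $\widehat{\overline S}$ is the Boolean subalgebra generated by $\{\widehat{\nabla_s}\mid s\in S\}$.
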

\begin{proof}
  Let us define $\alpha_{(L,S)}: \pt(L) \to \pt(\cC_SL)$ by $p\mapsto
  \widetilde{p}$, where for every point $p\in\pt(L)$ the map
  $\widetilde{p}$ is the unique morphism such that $\widetilde p \circ
  \nabla = p$, as given by Proposition~\ref{p:10}. Since $L$ is a
  subframe of $\cC_SL$ and thus, every point of $\cC_SL$ restricts to
  a point of~$L$, this assignment is a bijection. Let us show that
  $\alpha_{(L,S)}$ defines an isomorphism of Pervin spaces
  $\alpha_{(L, S)}: \psym\circ \pt(L,S)\to\pt\circ \fsym(L,S)$. By
  Corollary~\ref{perviniso}, we need to show that the preimages of the
  elements of the lattice component of $\pt\circ\fsym(L,S)$ are
  exactly the elements of the lattice component of
  $\psym\circ\pt(L,S)$. Noting that the former lattice is generated by
  the elements of the form $\widehat \nabla_s$ and $\widehat \Delta_s$
  and the latter by those of the form $\widehat s$ and $(\widehat
  s)^c$ ($s \in S$), that is a consequence of having
  \[\alpha_{(L, S)}^{-1}(\widehat{\nabla_s}) = \widehat s \qquad
    \text{and}\qquad \alpha_{(L, S)}^{-1}(\widehat{\Delta_s}) =
    (\widehat s)^c,\] for every $s\in S$.
\end{proof}

\begin{theorem}\label{t:5}
  The following diagram commutes up to natural
  isomorphism.
  \begin{center}
    \begin{tikzpicture}[node distance = 15mm]
      \node (A) {$\ffrm^{\rm op}$}; \node [right of = A, xshift =
      25mm] (B) {$\sffrm^{\rm op}$}; \node [below of = A] (C)
      {$\perv$}; \node [below of = B] (D) {$\sperv$};
      \draw (A) to node[left, ArrowNode] {$\pt$} (C);
      \draw (C) to node[below, ArrowNode] {$\psym$}(D);
      \draw (B) to node[right, ArrowNode] {$\pt$} (D);
      \draw (A) to node[above, ArrowNode] {$\fsym$} (B);
    \end{tikzpicture}
  \end{center}
\end{theorem}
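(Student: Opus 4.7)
The heavy lifting for this theorem is already done in Proposition~\ref{p:23}, which supplies, for each Frith frame $(L,S)$, an isomorphism of Pervin spaces
\[
\alpha_{(L,S)}: \psym\circ\pt(L,S) \;\longrightarrow\; \pt\circ\fsym(L,S), \qquad p \longmapsto \widetilde p,
\]
where $\widetilde p: \cC_SL \to \two$ is the unique frame homomorphism extending $p:L\to\two$ provided by Proposition~\ref{p:10}. So the plan reduces to verifying that the family $\{\alpha_{(L,S)}\}_{(L,S)}$ is natural in $(L,S)$.

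For naturality, I would fix an arbitrary morphism of Frith frames $h:(L,S)\to(M,T)$ and show that the square
\[
\begin{array}{ccc}
\psym\circ\pt(M,T) & \xrightarrow{\;\alpha_{(M,T)}\;} & \pt\circ\fsym(M,T) \\
\psym\circ\pt(h)\;\downarrow & & \downarrow\;\pt\circ\fsym(h) \\
\psym\circ\pt(L,S) & \xrightarrow{\;\alpha_{(L,S)}\;} & \pt\circ\fsym(L,S)
\end{array}
\]
commutes. Since $\psym$ acts as the identity on morphisms and $\pt(h)$ is defined by precomposition, the bottom-left composite sends a point $p:M\to\two$ to $\widetilde{p\circ h}:\cC_SL\to\two$. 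The top-right composite sends the same $p$ to $\widetilde{p}\circ\overline{h}:\cC_SL\to\two$, where $\overline h:\cC_SL\to\cC_TM$ is the canonical extension of $h$ from Corollary~\ref{c:1}, so that $\fsym(h)=\overline h$.

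To conclude that these two frame homomorphisms $\cC_SL\to\two$ coincide, I would invoke the uniqueness clause of Proposition~\ref{p:10}. It suffices to check that both agree on the subframe $\nabla[L]\subseteq\cC_SL$. For $a\in L$, on the left we get $\widetilde{p\circ h}(\nabla_a)=(p\circ h)(a)$, while on the right, using the identity $\overline h\circ\nabla_L=\nabla_M\circ h$ from Corollary~\ref{c:1}, we get $\widetilde p(\overline h(\nabla_a))=\widetilde p(\nabla_{h(a)})=p(h(a))$. The two agree, so uniqueness forces $\widetilde{p\circ h}=\widetilde p\circ\overline h$, establishing naturality.

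There is no genuine obstacle here; the proof is a direct unpacking of the definitions combined with the universal property of the congruence frame $\cC_SL$ relative to $S$. The only care needed is with variance: because $\pt$ is contravariant, the naturality square must be set up with arrows in the correct direction before comparison.
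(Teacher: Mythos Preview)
Your proof is correct and essentially identical to the paper's. Both invoke Proposition~\ref{p:23} for the componentwise isomorphism and establish naturality by checking that the two candidate maps $\cC_SL\to\two$ agree on elements of the form $\nabla_a$, then appeal to the uniqueness clause of Proposition~\ref{p:10}; the only cosmetic difference is that the paper names the generic morphism as $h:(M,T)\to(L,S)$ rather than $h:(L,S)\to(M,T)$, which merely swaps the roles of the two Frith frames in the naturality square.
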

\begin{proof}
  We show that the family of isomorphisms $\{\alpha_{(L, S)} \mid (L,
  S) \text{ is a Frith frame}\}$ defined in Proposition~\ref{p:23}
  induces a natural transformation $\psym\circ \pt\implies\pt\circ
  \fsym$. Suppose that $h:(M,T)\ra (L, S)$ is a morphism of Frith
  frames, that is, $h$ is a morphism $(L, S) \to (M, T)$ in
  $\ffrm^{\rm op}$. Then, naturality of $\alpha$ amounts to commutativity
  of the following square:
   \begin{center}
    \begin{tikzpicture}
      \node (A) {$\pt(L)$}; \node[right of = A, xshift = 20mm] (B)
      {$\pt(\cC_SL)$}; \node[below of = A] (C) {$\pt(M)$}; \node[below
      of = B] (D) {$\pt(\cC_TM)$};
      \draw (A) to node[ArrowNode,above] {$\alpha_{(L,S)}$} (B); \draw
      (B) to node[ArrowNode,right] {$(-)\circ \overline h$} (D);
      \draw (A) to node[ArrowNode,left] {$(-)\circ h$} (C); \draw (C)
      to node[ArrowNode, below] {$\alpha_{(M,T)}$} (D);
    \end{tikzpicture}
  \end{center}
  Let $p\in \pt(L)$. Since, by definition, $\alpha_{(M, T)}(p \circ
  h)$ is the unique point of $\pt(\cC_TM)$ extending $p \circ h$, it
  suffices to show that $\alpha_{(L, S)} (p)\circ \overline{h}$
  extends $p \circ h$. That is indeed the case because, for every $a
  \in M$, we have the following:
  \[\alpha_{(L, S)}(p)\circ \overline{h} ( \nabla_a) = \alpha_{(L, S)}(p)(\nabla_{h(a)})
    \just {(*)}= p\circ h(a),\]
  where the equality $(*)$ follows from having that, by definition,
  $\alpha_{(L, S)}(p)$ is the unique extension of~$p$ to a point of
  $\cC_SL$.
\end{proof}

\section{Completion of a Frith frame}\label{sec:7}

As mentioned in Section~\ref{sec:prelim}, complete (quasi-)uniform
frames may be equivalently characterized via dense extremal
epimorphisms or via Cauchy maps. Since the category of Frith frames
fully embeds into the category of quasi-uniform frames, there is a
natural notion of completion of a Frith frame. In this section we
explore it, both from the point of view of dense extremal epimorphisms
and of Cauchy maps. As the reader will notice, in this restricted
subcategory of quasi-uniform frames, the concepts involved become
surprisingly simple.

\subsection{Dense extremal epimorphisms}\label{sec:8}
We say that a symmetric Frith frame $(L, B)$ is \emph{complete} if
every dense extremal epimorphism $(M, C) \twoheadrightarrow (L, B)$
with $(M, C)$ symmetric is an isomorphism. More generally, a Frith
frame $(L, S)$ is \emph{complete} provided its symmetric
reflection~$\fsym(L, S)$ is complete.  As the reader may expect,
completeness of a Frith frame $(L, S)$ is equivalent to completeness
of the associated quasi-uniform frame $(\cC_S L, \cE_S)$
(cf. Proposition~\ref{p:1}). A \emph{completion} of $(L, S)$ is a
complete Frith frame $(M, T)$ together with a dense extremal
epimorphism $(M, T) \twoheadrightarrow (L, S)$.

Given a Frith frame $(L, S)$, by Corollary~\ref{c:7}, there is a
unique frame homomorphism $\idl(S) \to L$ extending the embedding
$S\hookrightarrow L$. Clearly, this frame homomorphism induces a dense
extremal epimorphism of Frith frames
\begin{equation}
  c_{(L, S)}: (\idl(S), S) \twoheadrightarrow (L, S), \quad J \mapsto
  \bigvee J.\label{eq:3}
\end{equation}

An immediate consequence of the definition of completeness is the
following:

\begin{lemma}\label{l:16}
  If $(L, S)$ is complete, then $c_{(\cC_SL, \overline
    S)}:(\idl(\overline S), \overline S) \to (\cC_SL, \overline S)$ is
  an isomorphism.
\end{lemma}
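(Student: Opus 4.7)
The proof is essentially a direct unfolding of the definitions, so the plan is short.

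First I would recall that, by definition, saying $(L, S)$ is complete means that its symmetric reflection $\fsym(L, S) = (\cC_SL, \overline S)$ is complete as a symmetric Frith frame, i.e.\ every dense extremal epimorphism from a symmetric Frith frame onto $(\cC_SL, \overline S)$ is an isomorphism. So the task reduces to checking that the map $c_{(\cC_SL, \overline S)}$ falls into the scope of this defining property.

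Next I would verify that $(\idl(\overline S), \overline S)$ is a symmetric Frith frame. Since $\overline S$ is a Boolean algebra, the only thing to observe is that the canonical embedding $s \mapsto {\downarrow}s$ of $\overline S$ into $\idl(\overline S)$ preserves complements: if $s \vee s^* = 1$ and $s \wedge s^* = 0$ in $\overline S$, then ${\downarrow}s \vee {\downarrow}s^* = {\downarrow}1$ and ${\downarrow}s \wedge {\downarrow}s^* = {\downarrow}0$ in $\idl(\overline S)$, so the image of $\overline S$ is a Boolean subalgebra of $\idl(\overline S)$. Hence $(\idl(\overline S), \overline S)$ is indeed symmetric.

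Finally, specializing the construction from equation~\eqref{eq:3} to the Frith frame $(\cC_SL, \overline S)$ yields a dense extremal epimorphism
\[
c_{(\cC_SL, \overline S)} : (\idl(\overline S), \overline S) \twoheadrightarrow (\cC_SL, \overline S)
\]
whose source is symmetric by the previous paragraph. Applying the completeness of $(\cC_SL, \overline S)$ then immediately forces $c_{(\cC_SL, \overline S)}$ to be an isomorphism, as required. There is no real obstacle here; the only point one has to be careful about is not confusing the two notions of completeness (for general and for symmetric Frith frames), and checking that $\overline S$ is Boolean inside $\idl(\overline S)$.
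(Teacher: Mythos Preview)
Your proposal is correct and follows essentially the same approach as the paper's proof, which simply observes that completeness of $(L,S)$ means by definition completeness of $\fsym(L,S) = (\cC_SL, \overline S)$, and that $c_{(\cC_SL, \overline S)}$ is a dense extremal epimorphism by~\eqref{eq:3}. Your explicit verification that $(\idl(\overline S), \overline S)$ is symmetric is a detail the paper leaves implicit, but it is indeed needed to invoke the definition of completeness.
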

\begin{proof}
  This is simply because, by definition, $(L, S)$ is complete if and
  only if so is $\fsym(L, S) = (\cC_SL, \overline S)$ and $c_{(\cC_SL,
    \overline S)}$ is a dense extremal epimorphism.
\end{proof}

In particular, if $(L, S)$ is complete, then its symmetric reflection
$\fsym(L, S)$ is coherent. In turn, since $\fsym(L, S)$ is a
zero-dimensional Frith frame, by Lemma~\ref{l:13}, being coherent is
equivalent to being compact, and since $L$ is a subframe of $\cC_S L$,
we have $K(\cC_SL) \cap L \subseteq K(L)$. Now notice that $K(\cC_SL)
= \overline{S}$ by coherence of $\fsym(L, S)$ and Lemma~\ref{l:8}, and
thus $S\se K(\cC_S L)$. Therefore, $S\se K(\cC_S L)\cap L$ and this
implies that $S \subseteq K(L)$. This means that $(L, S)$ is coherent,
too. We have just proved the following:
\begin{equation}
  (L, S) \text{ complete} \implies \fsym(L, S) \text{ compact} \iff
  \fsym(L, S) \text{ coherent} \implies (L, S) \text{
    coherent}.\label{eq:4}
\end{equation}
From this, we may already show that our notion of \emph{completeness}
is consistent with usual completeness for (quasi-)uniform frames:
\begin{proposition}\label{p:1}
  Let $(L, S)$ be a Frith frame. Then, $(L, S)$ is complete if and
  only if $(\cC_SL, \cE_S)$ is complete.
\end{proposition}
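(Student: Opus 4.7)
The plan is to chain the desired equivalence through the categories of (symmetric) Frith frames and (quasi-)uniform frames using the machinery already established. Unpacking definitions, $(L, S)$ is complete iff $\fsym(L, S) = (\cC_S L, \overline S)$ is complete as a symmetric Frith frame. Under the equivalence $E': \sffrm \cong \unifrm_{\rm trans,\, tot\, bd}$ of Corollary~\ref{c:5}, this in turn is equivalent to $E(\fsym(L, S)) = (\cC_S L, \cE_{\overline S})$ being complete \emph{within} $\unifrm_{\rm trans,\, tot\, bd}$, i.e., every dense extremal epi \emph{with transitive and totally bounded source} onto $(\cC_S L, \cE_{\overline S})$ being an isomorphism. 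On the other side, Proposition~\ref{p:2} identifies $(\cC_S L, \cE_{\overline S}) = \sym(\cC_S L, \cE_S)$, and Proposition~\ref{l:2} reduces completeness of $(\cC_S L, \cE_S)$ in $\qunifrm$ to completeness of $(\cC_S L, \cE_{\overline S})$ in $\unifrm$. So the statement boils down to matching completeness of this uniform frame in $\unifrm$ with completeness in the subcategory $\unifrm_{\rm trans,\, tot\, bd}$.

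The $(\Leftarrow)$ direction proceeds smoothly: from $(\cC_S L, \cE_S)$ complete we get $(\cC_S L, \cE_{\overline S})$ complete in $\unifrm$, hence \emph{a fortiori} complete in the subcategory $\unifrm_{\rm trans,\, tot\, bd}$, and then the equivalence $E'$ of Corollary~\ref{c:5} (being full, faithful, and essentially surjective, so that dense extremal epis correspond) transfers this back to completeness of $\fsym(L, S)$ in $\sffrm$, which by definition is completeness of $(L, S)$. I expect the main obstacle to be the $(\Rightarrow)$ direction: here completeness of $(L, S)$ only tells us that dense extremal epis onto $(\cC_S L, \cE_{\overline S})$ coming from transitive and totally bounded uniform frames are isomorphisms, but a priori a dense extremal epi $(M, \cF) \twoheadrightarrow (\cC_S L, \cE_{\overline S})$ of uniform frames need not have transitive and totally bounded source, so one cannot just apply the equivalence.

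To overcome this, I would invoke the implication chain~\eqref{eq:4} established just before the statement: completeness of $(L, S)$ forces $\fsym(L, S)$ to be compact, so in particular $\cC_S L$ is a compact frame. Proposition~\ref{p:22} now applies directly: \emph{any} dense extremal epi of uniform frames onto a uniform frame with compact underlying frame is an isomorphism, with no hypothesis on the source. This upgrades completeness of $(\cC_S L, \cE_{\overline S})$ in $\unifrm_{\rm trans,\, tot\, bd}$ to completeness in $\unifrm$. Propositions~\ref{l:2} and~\ref{p:2} then transfer this to completeness of $(\cC_S L, \cE_S)$ in $\qunifrm$, closing the argument.
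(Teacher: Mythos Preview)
Your proof is correct and follows essentially the same approach as the paper: both reduce to the symmetric case via Propositions~\ref{l:2} and~\ref{p:2}, handle the forward direction via~\eqref{eq:4} and Proposition~\ref{p:22}, and treat the converse by showing that dense extremal epimorphisms of symmetric Frith frames give dense extremal epimorphisms of uniform frames. The only cosmetic difference is that the paper invokes Proposition~\ref{p:3} directly for that last step, while you package it abstractly through the equivalence~$E'$; your ``a fortiori'' and ``dense extremal epis correspond'' claims are exactly where the content of Proposition~\ref{p:3} is tacitly used.
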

\begin{proof}
  We first observe that it suffices to consider the case where $(L,
  S)$ is symmetric. Indeed, by definition, $(L, S)$ is complete if and
  only if $\fsym(L, S) = (\cC_SL, \overline S)$ is complete. On the
  other hand, by Proposition~\ref{l:2}, $(\cC_SL, \cE_S)$ is complete
  if and only if $(\cC_SL, \overline{\cE_S})$ is complete and, by
  Proposition~\ref{p:2}, we have $(\cC_SL, \overline{\cE_S}) = \sym
  \circ E(L, S) = E\circ \fsym(L, S) = (\cC_{S}L, \cE_{\overline
    S})$. Therefore, the claim holds if and only if, for every Frith
  frame $(L, S)$, completeness of $\fsym (L, S) = (\cC_SL,
  \overline{S})$ and of $E \circ \fsym(L, S) = (\cC_{S}L,
  \cE_{\overline S})$ are equivalent notions.

  Now, we let $(L, B)$ be a symmetric Frith frame. Suppose that $(L,
  B)$ is complete and let $h: (M, \cE) \twoheadrightarrow (L,
  \cE_{B})$ be a dense extremal epimorphism for some uniform frame
  $(M, \cE)$. Since $(L, B)$ is complete, by~\eqref{eq:4}, $L$ is
  compact.  Therefore, by Proposition~\ref{p:22}, $h$ is an
  isomorphism.
  Conversely, suppose that $(L, \cE_{B})$ is complete. By
  Proposition~\ref{p:3}, every dense extremal epimorphism $h: (M, C)
  \twoheadrightarrow (L, B)$ of symmetric Frith frames induces an
  extremal epimorphism $h:(M, \cE_{C}) \twoheadrightarrow (L,
  \cE_{B})$, which is clearly dense. Since $(L, \cE_{B})$ is complete,
  $h$ is one-to-one and, by Corollary~\ref{p:7}, it is an
  isomorphism. Thus, $(L, B)$ is complete as required.
\end{proof}

Before proceeding, we remark that a consequence of
Proposition~\ref{p:1} is that the categories $\bf C\ffrm$ of complete
Frith frames and $\bf C\qunifrm_{\rm trans,\, tot\, bd}$ of complete
transitive and totally bounded quasi-uniform frames are equivalent.
\begin{corollary}\label{c:8}
  The coreflection $E: \ffrm \hookrightarrow\qunifrm_{\rm trans,\,
    tot\, bd}$ restricts and co-restricts to an equivalence of
  categories $E'': \bf C\ffrm \to \bf C\qunifrm_{\rm trans,\, tot\,
    bd}$.
\end{corollary}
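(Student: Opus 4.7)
The plan is to show that $E''$ is well-defined, fully faithful, and essentially surjective. For well-definedness, I would invoke Proposition~\ref{p:1}: since $(L,S)$ is complete if and only if $E(L,S) = (\cC_SL, \cE_S)$ is complete, the full embedding $E: \ffrm \hookrightarrow \qunifrm_{\rm trans,\, tot\, bd}$ of Proposition~\ref{p:9} restricts and co-restricts to a functor $E'': \bf C\ffrm \to \bf C\qunifrm_{\rm trans,\, tot\, bd}$. Being a restriction of a full and faithful functor (both on objects and on morphisms the restriction is inclusion/identity on hom-sets), $E''$ inherits full faithfulness for free.

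The main step is essential surjectivity. Given a complete transitive and totally bounded quasi-uniform frame $(K, \cE)$, Proposition~\ref{p:18} produces a Frith frame $(L, S)$ (with $L = \langle R_{(K, \cE)} \rangle_\Frm$ and $e[S] = R_{(K, \cE)}$) together with a dense extremal epimorphism $\gamma_{(K, \cE)}: (\cC_SL, \cE_S) \twoheadrightarrow (K, \cE)$ of quasi-uniform frames. By the very definition of completeness for quasi-uniform frames, this dense extremal epimorphism must be an isomorphism. Hence $E(L, S) = (\cC_SL, \cE_S) \cong (K, \cE)$, and since $(K, \cE)$ is complete, by Proposition~\ref{p:1} the Frith frame $(L, S)$ is complete as well. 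Thus $(L, S) \in \bf C\ffrm$ and $E''(L, S) \cong (K, \cE)$, establishing essential surjectivity.

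The proof is essentially a direct bookkeeping argument, assembling facts already proven. I do not foresee any genuine obstacle, since the single substantive input, namely that $\gamma_{(K, \cE)}$ is automatically an isomorphism when $(K, \cE)$ is complete, was already announced in the paragraph immediately preceding the statement.
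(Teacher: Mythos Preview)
Your proposal is correct and follows essentially the same approach as the paper: well-definedness via Proposition~\ref{p:1}, full faithfulness inherited from Proposition~\ref{p:9}, and essential surjectivity by applying completeness to the dense extremal epimorphism $\gamma_{(K,\cE)}$ of Proposition~\ref{p:18} and then invoking Proposition~\ref{p:1} again to conclude that $(L,S)$ is complete.
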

\begin{proof}
  By Proposition~\ref{p:1}, the functor $E: \ffrm
  \hookrightarrow\qunifrm_{\rm trans,\, tot\, bd}$ restricts and
  co-restricts to a functor $E'': \bf C\ffrm \hookrightarrow \bf
  C\qunifrm_{\rm trans,\, tot\, bd}$. Since $\bf C\ffrm$ and $\bf
  C\qunifrm_{\rm trans,\, tot\, bd}$ are, respectively, full
  subcategories of $\ffrm$ and of $\qunifrm_{\rm trans,\, tot\, bd}$,
  by Proposition~\ref{p:9}, $E''$ is a full embedding. Finally, let
  $(K, \cE)$ be a transitive and totally bounded quasi-uniform
  frame. If $(K, \cE)$ is complete, then the dense extremal
  epimorphism $\gamma_{(K, \cE)}: (\cC_SL, \cE_S) \twoheadrightarrow
  (K, \cE)$ of Proposition~\ref{p:18} has to be an isomorphism and,
  again by Proposition~\ref{p:1}, $(L, S)$ is complete. Therefore,
  $(K, \cE) \cong E''(L, S)$ and $E''$ is an equivalence of
  categories.
\end{proof}

Our next goal is to show that all the statements in~\eqref{eq:4} are
in fact equivalent. Since by Proposition~\ref{p:17} coherent Frith
frames are those of the form $(\idl(S), S)$, for some lattice~$S$, we
only need to prove that $(\idl(S), S)$ is always a complete Frith
frame. We will need the following lemma.

\begin{lemma}\label{densesym}
  If $(L,B)$ and $(M,C)$ are symmetric Frith frames, then any dense
  extremal epimorphism $h:(L,B)\ra (M,C)$ restricts and co-restricts
  to a Boolean algebra isomorphism $h': B \to C$.
\end{lemma}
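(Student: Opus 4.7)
The plan is to exploit the two defining properties of $h$ separately: surjectivity of the restriction will come from extremality, and injectivity will come from density.

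First I would use Proposition~\ref{p:21} to get that $h[B] = C$, so that $h$ restricts and co-restricts to a surjective map $h' : B \to C$. Since $h$ is a frame homomorphism, $h'$ automatically preserves finite meets and finite joins, and since frame homomorphisms always send complemented pairs to complemented pairs (with complement preserved), the fact that $B$ is a Boolean subalgebra of $L$ forces $h'$ to preserve complements. Hence $h'$ is a surjective Boolean algebra homomorphism.

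The only thing remaining is injectivity, and this is where density enters. Suppose $b_1, b_2 \in B$ satisfy $h(b_1) = h(b_2)$. Since $b_2$ is complemented in $L$ with complement $b_2^*$, the element $b_1 \wedge b_2^*$ lies in $L$ and we compute
\[h(b_1 \wedge b_2^*) = h(b_1) \wedge h(b_2)^* = h(b_2) \wedge h(b_2)^* = 0.\]
Density of $h$ then gives $b_1 \wedge b_2^* = 0$, and by the defining property~\eqref{eq:5} of the pseudocomplement (applied in the Boolean algebra $B$) this yields $b_1 \leq b_2$. The symmetric argument gives $b_2 \leq b_1$, so $b_1 = b_2$.

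I do not expect any real obstacle here: the structural content is genuinely short, and the only subtle point is remembering that frame homomorphisms send complements to complements only when the source element is complemented, which is exactly the situation guaranteed by $B$ being a Boolean subalgebra of $L$.
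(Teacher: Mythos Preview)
Your proposal is correct and follows essentially the same route as the paper: surjectivity from Proposition~\ref{p:21}, and injectivity from density via $h(b_1 \wedge b_2^*)=0$ and the complementedness of $b_2$. The only cosmetic difference is that the paper proves the slightly stronger order-reflecting statement $h(b_1)\leq h(b_2)\Rightarrow b_1\leq b_2$ in one pass, whereas you assume equality and run the argument twice; the content is identical.
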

\begin{proof}
  Suppose that $(L,B)$ and $(M,C)$ are symmetric Frith frames, and let
  $h:(L,B)\ra (M,C)$ be a dense extremal epimorphism. By
  Proposition~\ref{p:21}, we have $h[B] = C$. So, we only need to show
  that the restriction of $h$ to $B$ is injective. Let $b_1,b_2\in B$
  and suppose that $h(b_1)\leq h(b_2)$. This implies that $h(b_1)\we
  h(b_2)^*=0$ and, since frame morphisms preserve complemented pairs,
  we have $h(b_1\we b_2^*)=0$. By density, we may then conclude that
  $b_1\we b_2^*=0$ and, since $b_2$ is complemented, this implies
  $b_1\leq b_2$.
\end{proof}
Let $\overline{c}_{(L, S)}: \fsym(\idl(S), S) \twoheadrightarrow
\fsym(L, S)$ denote the symmetric reflection of $c_{(L, S)}$, that is,
$\overline{c}_{(L, S)}: (\cC_S\idl(S), \overline{S})
\twoheadrightarrow (\cC_SL, \overline{S})$ is defined by
$\overline{c}_{(L, S)}(\nabla_s) = \nabla_s$ and $\overline{c}_{(L,
  S)}(\Delta_s) = \Delta_s$, for $s \in S$.
\begin{proposition}\label{p:6}
  Let $(L, S)$, $(M, C)$ be Frith frames with $(M, C)$ symmetric and
  let $h:(M, C) \to (\cC_SL, \overline{S})$ be a dense extremal
  epimorphism. Then, there exists a unique morphism $g: (\cC_S\idl(S),
  \overline{S}) \to (M, C)$ making the following diagram commute:
  \begin{center}
    \begin{tikzpicture}[node distance = 20mm]
      \node (idl) {$(\cC_S\idl(S), \overline{S})$}; \node[right of =
      idl, xshift = 20mm] (M) {$(M, C)$}; \node[below of = M] (L)
      {$(\cC_SL, \overline{S})$};
      \draw[dashed, ->] (idl) to node[above] {$g$} (M); \draw[->>]
      (idl) to node[below] {$\overline c_{(L, S)}$} (L); \draw[->>]
      (M) to node[right] {$h$} (L);
    \end{tikzpicture}
  \end{center}
  Moreover, $g$ is a dense extremal epimorphism.
\end{proposition}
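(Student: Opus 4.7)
The plan is to build $g$ by specifying its values on the generators $\{\nabla_s,\Delta_s\mid s\in S\}$ of $\cC_S\idl(S)$. By Lemma~\ref{densesym}, the dense extremal epimorphism $h$ restricts and co-restricts to a Boolean algebra isomorphism $h'\colon C\to\overline S$. I would first define a bounded lattice homomorphism $S\to C$ by $s\mapsto (h')^{-1}(\nabla_s)$. By Corollary~\ref{c:7} this extends uniquely to a frame homomorphism $\idl(S)\to M$, and since its image lies in the Boolean algebra $C$, hence consists of complemented elements of $M$, Proposition~\ref{p:10} yields a unique further extension to a frame homomorphism $g\colon \cC_S\idl(S)\to M$ satisfying $g(\nabla_s)=(h')^{-1}(\nabla_s)$ and $g(\Delta_s)=g(\nabla_s)^*=(h')^{-1}(\Delta_s)$ for every $s\in S$.

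Next I would check that $g$ is a morphism of Frith frames, i.e.\ that $g[\overline S]\subseteq C$; this is automatic because $\overline S$ is generated as a Boolean algebra by $\{\nabla_s\mid s\in S\}$, all of whose images lie in the Boolean algebra $C$. Commutativity $h\circ g=\overline c_{(L,S)}$ holds by construction on the generators $\nabla_s,\Delta_s$ of $\cC_S\idl(S)$, hence everywhere. Uniqueness is forced: any such $g$ must send $\nabla_s$ into $C$ and map it to some preimage of $\nabla_s$ under $h$, and injectivity of $h'$ pins down $g(\nabla_s)=(h')^{-1}(\nabla_s)$; the value at $\Delta_s$ is then determined by complementation.

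For the extremal epimorphism property, by Proposition~\ref{p:21} it suffices to show $g[\overline S]=C$. Since $h'$ is a Boolean isomorphism and $\overline S$ is generated as a Boolean algebra by $\{\nabla_s\mid s\in S\}$, the set $\{g(\nabla_s)\mid s\in S\}=\{(h')^{-1}(\nabla_s)\mid s\in S\}$ generates $C$ as a Boolean algebra; combining this with $g(\Delta_s)=g(\nabla_s)^*$ yields $g[\overline S]=C$.

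The delicate step, which I expect to be the main obstacle, is density of $g$. If $g(x)=0$, then $\overline c_{(L,S)}(x)=h(g(x))=0$, so it suffices to show that $\overline c_{(L,S)}\colon\cC_S\idl(S)\to\cC_SL$ is dense. Here I would exploit the fact that, since $S$ is join-dense in $\idl(S)$ via principal ideals, $\cC_S\idl(S)$ is generated as a frame by $\{\nabla_s,\Delta_s\mid s\in S\}$; hence every $x\in\cC_S\idl(S)$ can be written as $\bigvee_i(\nabla_{r_i}\wedge\Delta_{s_i})$ with $r_i,s_i\in S$. A direct computation in a congruence frame $\cC K$ shows that $\nabla_r\wedge\Delta_s=0$ if and only if $r\leq s$ in $K$. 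Since $S$ embeds as a sublattice of both $L$ and $\idl(S)$ with the same induced order, the relations $\nabla_{r_i}\wedge\Delta_{s_i}=0$ in $\cC L$ and in $\cC\idl(S)$ are equivalent; thus $\overline c_{(L,S)}(x)=0$ forces every $\nabla_{r_i}\wedge\Delta_{s_i}=0$ in $\cC_S\idl(S)$, whence $x=0$.
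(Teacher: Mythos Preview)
Your proposal is correct and follows essentially the same approach as the paper: invoke Lemma~\ref{densesym} to obtain the Boolean isomorphism $h'\colon C\to\overline S$, define $g$ on the generators via $(h')^{-1}$, and extend using Corollary~\ref{c:7} and Proposition~\ref{p:10}. The only minor difference is in the density argument: the paper observes directly that $g$ restricts to a lattice isomorphism $\overline S\to C$ (namely $(h')^{-1}$), so $g(\nabla_{s_1}\wedge\Delta_{s_2})=0$ forces $\nabla_{s_1}\le\nabla_{s_2}$ in $\overline S$ and hence $\nabla_{s_1}\wedge\Delta_{s_2}=0$; your route via density of $\overline c_{(L,S)}$ is a small detour that ultimately unwinds to the same computation.
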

\begin{proof}
  Since $\overline{S}$ is join-dense in $\cC_S\idl(S)$, frame
  homomorphisms preserve complemented elements, and $C$ is closed
  under taking complements, if $g$ exists, then its underlying frame
  homomorphism is the unique extension of a lattice homomorphism $g':
  S \to C$ such that $h\circ g'(s) = \nabla_s$ for every $s \in S$. We
  first argue that the morphism $g'$ exists. Since $h$ is a dense
  extremal epimorphism, by Lemma~\ref{densesym}, it restricts and
  co-restricts to a lattice isomorphism $h':C\ra \overline{S}$. Let
  $g'$ be the restriction to~$S$ of the inverse of~$h'$.  Clearly,
  $g'$ satisfies $h \circ g'(s) = \nabla_s$ for every $s \in S$. The
  desired extension of $g'$ exists, too: we may first extend $g'$ to a
  frame homomorphism $\idl(S) \to M$, by Corollary~\ref{c:7}, and then
  to a frame homomorphism $g:\cC_S\idl(S) \to M$, by
  Proposition~\ref{p:10}.
  
  It remains to show that $g$ is a dense extremal epimorphism.
  Observe that $g$ suitably restricted and co-restricted is the
  inverse of $h':C\ra \overline{S}$. Therefore $g[\overline{S}] = C$
  and $g$ is an extremal epimorphism. Finally, since $\cC_S\idl(S)$ is
  generated, as a frame, by $\overline{S}$, $g$ is dense provided
  $\nabla_{s_1} \cap \Delta_{s_2} = 0$ for every $s_1, s_2 \in S$
  satisfying $g(\nabla_{s_1} \cap \Delta_{s_2}) = 0$, and this also
  follows from having that $g$ restricts and co-restricts to a lattice
  isomorphism $\overline{S} \to C$.
\end{proof}

\begin{corollary}\label{c:4}
  If $(L, S)$ is a Frith frame, then $(\idl(S), S)$ is complete and,
  therefore, $c_{(L, S)}: (\idl(S), S) \twoheadrightarrow (L, S)$ is a
  completion of~$(L, S)$.
\end{corollary}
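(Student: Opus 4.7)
The plan is to unwind the definition of completeness and then apply Proposition~\ref{p:6} in the very special case $L := \idl(S)$. By definition, $(\idl(S), S)$ is complete precisely when its symmetrization $\fsym(\idl(S), S) = (\cC_S\idl(S), \overline{S})$ is complete as a symmetric Frith frame, i.e.\ every dense extremal epimorphism onto it from a symmetric Frith frame is an isomorphism. So let $h: (M, C) \twoheadrightarrow (\cC_S\idl(S), \overline{S})$ be an arbitrary such dense extremal epimorphism; I need to show $h$ is an isomorphism.

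The first key observation is that $c_{(\idl(S), S)}: (\idl(S), S) \to (\idl(S), S)$ is the identity. This is because $c_{(L,S)}$ is defined by $J \mapsto \bve J$, and an ideal of $S$ equals its own join in $\idl(S)$. Consequently $\overline{c}_{(\idl(S), S)}$, its symmetric reflection, is the identity on $(\cC_S\idl(S), \overline{S})$.

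Now I apply Proposition~\ref{p:6} with $L := \idl(S)$. It produces a dense extremal epimorphism $g: (\cC_S\idl(S), \overline{S}) \to (M, C)$ satisfying $h \circ g = \overline{c}_{(\idl(S), S)} = \mathrm{id}$. In particular, $g$ is a split monomorphism, hence a monomorphism. But $g$ is also an extremal epimorphism, and any morphism that is simultaneously a monomorphism and an extremal epimorphism is an isomorphism (cf.\ \cite[Proposition~4.3.7]{Borceux94-vol1}). Thus $g$ is an isomorphism, and so is $h = g^{-1}$. This establishes completeness of $(\idl(S), S)$.

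The second assertion is then immediate: $c_{(L, S)}: (\idl(S), S) \twoheadrightarrow (L, S)$ was already observed in~\eqref{eq:3} to be a dense extremal epimorphism, and we have just shown its domain is complete, so it exhibits $(\idl(S), S)$ as a completion of $(L, S)$. There is no real obstacle here; the whole argument is just a careful use of the universal property packaged in Proposition~\ref{p:6}.
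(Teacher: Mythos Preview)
Your proof is correct and follows essentially the same route as the paper's: both reduce to the symmetrization, apply Proposition~\ref{p:6} with $L = \idl(S)$, observe that $\overline{c}_{(\idl(S),S)}$ is the identity, and conclude via the ``mono $+$ extremal epi $=$ iso'' principle. The only cosmetic difference is that the paper applies this principle to $h$ (which is one-to-one since $h\circ g = \id$), whereas you apply it to $g$ (a split mono) and then invert; either way works.
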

\begin{proof}
  By definition, $(\idl(S), S)$ is complete if and only if so is
  $\fsym(\idl(S),S) = (\cC_S\idl(S), \overline{S})$. Let $h:(M, C)
  \twoheadrightarrow (\cC_S\idl(S), \overline{S})$ be a dense extremal
  epimorphism, with $(M, C)$ symmetric. Then, Proposition~\ref{p:6}
  applied to $h$ gives the existence of a dense extremal epimorphism
  $g: (\cC_S\idl(S),\overline{S}) \twoheadrightarrow (M, C)$
  satisfying $h \circ g = \overline{c}_{(\idl(S), S)}$.  Since,
  $\overline{c}_{(\idl(S), S)}$ is the identity function, $h$ is one-to-one, thus an isomorphism. Thus, $\fsym(\idl(S),S) =
  (\cC_S\idl(S), \overline{S})$ is complete as required.
\end{proof}

We may now state following pointfree analogue of
\cite[Theorem~4.1]{pin17}, which is a straightforward consequence
of~\eqref{eq:4} and of Corollary~\ref{c:4}.
\begin{theorem}\label{manycharacterizations}
For a Frith frame $(L,S)$ the following are equivalent.
\begin{enumerate}
    \item\label{item:3} The Frith frame $(L,S)$ is complete.
    \item\label{item:4} The Frith frame $(L,S)$ is coherent.
    \item\label{item:5} The Frith frame $\fsym(L,S)$ is coherent.
    \item\label{item:6} The Frith frame $\fsym(L,S)$ is compact.
\end{enumerate}
\end{theorem}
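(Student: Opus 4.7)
The plan is to exploit the implication chain established in~\eqref{eq:4}, namely
\[\text{\ref{item:3}} \implies \text{\ref{item:6}} \iff \text{\ref{item:5}} \implies \text{\ref{item:4}},\]
so that the only missing arrow in the cycle is \ref{item:4}~$\implies$~\ref{item:3}. The remainder of the argument will therefore be devoted to showing that every coherent Frith frame is complete.

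Suppose that $(L,S)$ is coherent. By Lemma~\ref{l:8}, the frame $L$ is coherent and $S = K(L)$, so by Proposition~\ref{p:17} the counit of the equivalence $\idl(-) \colon \dlat \adj \cohfrith \colon U$ supplies an isomorphism of Frith frames $(L,S) \cong (\idl(S), S)$. Since completeness of a Frith frame is defined in terms of its symmetrization (via $\fsym$) and symmetrization is a functor, completeness is clearly invariant under isomorphism in $\ffrm$. Thus it suffices to observe that $(\idl(S), S)$ is complete, which is precisely the content of Corollary~\ref{c:4}.

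Putting the two parts together closes the cycle \ref{item:3}~$\Rightarrow$~\ref{item:6}~$\Leftrightarrow$~\ref{item:5}~$\Rightarrow$~\ref{item:4}~$\Rightarrow$~\ref{item:3} and establishes the equivalence of all four conditions. I do not anticipate a genuine obstacle here: the substantive work has already been carried out in~\eqref{eq:4} (which in turn rests on Lemma~\ref{l:13}, Lemma~\ref{l:16} and Lemma~\ref{l:8}) and in Corollary~\ref{c:4} (which uses Proposition~\ref{p:6}). The only minor point to spell out carefully is the invariance of completeness under Frith-frame isomorphism, which is immediate from the functoriality of $\fsym$ together with the fact that dense extremal epimorphisms and isomorphisms are categorical notions preserved by equivalences.
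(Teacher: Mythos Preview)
Your proposal is correct and follows essentially the same approach as the paper: the paper's proof is a one-line reference to~\eqref{eq:4} and Corollary~\ref{c:4}, which is exactly the two ingredients you use, with the implication \ref{item:4}~$\Rightarrow$~\ref{item:3} going via the isomorphism $(L,S)\cong(\idl(S),S)$ from Proposition~\ref{p:17} and then applying Corollary~\ref{c:4}. Your only addition is making the isomorphism-invariance of completeness explicit, which the paper leaves implicit.
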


We finish this section by showing that completions are unique, up to
isomorphism. 
\begin{proposition}\label{p:5} Let $(L, S)$ be a Frith frame. Then,
  for every morphism $h:(M, T) \to (L, S)$ with $(M, T)$ complete,
  there exists a unique morphism $\widehat h: (M, T) \to (\idl(S), S)$
  such that the following diagram commutes:
   \begin{center}
    \begin{tikzpicture}[node distance = 20mm]
      \node (M) {$(M, T)$}; \node[right of = idl, xshift = 20mm] (idl)
      {$(\idl(S), S)$}; \node[below of = idl] (L) {$(L, S)$};
      \draw[dashed, ->] (M) to node[above] {$\widehat h$} (idl);
      \draw[->] (M) to node[below, xshift = -7pt] {$h$} (L);
      \draw[->>] (idl) to node[right] {${c}_{(L, S)}$} (L);
    \end{tikzpicture}
  \end{center}
  Moreover, if $h$ is dense (respectively, an extremal epimorphism)
  then $\widehat h$ is also dense (respectively, an extremal
  epimorphism).
\end{proposition}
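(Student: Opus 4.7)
The plan is to exploit Theorem~\ref{manycharacterizations}, which says that $(M,T)$ being complete is equivalent to $(M,T)$ being coherent. By Lemma~\ref{l:8} and Proposition~\ref{p:17}, this means the canonical map $(\idl(T), T) \to (M, T)$, $J \mapsto \bigvee J$, is an isomorphism of Frith frames; in other words we may identify $M$ with $\idl(T)$, with $t \in T$ corresponding to the principal ideal ${\downarrow}t$. Under this identification, the strategy becomes: translate everything into the adjunction $\idl(-) \dashv U: \ffrm \to \dlat$ of Proposition~\ref{p:17}, and transport the universal property of $c_{(L,S)}$ along it.

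For existence, I would first observe that since $h:(M,T) \to (L,S)$ is a morphism of Frith frames, its restriction $h|_T: T \to S$ is a bounded lattice homomorphism. Applying the functor $\idl(-): \dlat \to \ffrm$ yields a morphism of Frith frames $\idl(h|_T): (\idl(T), T) \to (\idl(S), S)$ sending an ideal $J$ to ${\downarrow}h[J]$. Precomposing with the isomorphism $(M, T) \cong (\idl(T), T)$ gives the desired $\widehat h$. To check $c_{(L,S)} \circ \widehat h = h$, I would note that both are frame homomorphisms, so it suffices to verify equality on the join-dense set $T$; this is immediate since, for $t \in T$, $\widehat h(t) = {\downarrow}h(t)$ and $c_{(L,S)}({\downarrow}h(t)) = \bigvee{\downarrow}h(t) = h(t)$.

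For uniqueness, let $\widehat h'$ be another such morphism. For each $t \in T$, the morphism property forces $\widehat h'(t) \in S \subseteq \idl(S)$, i.e.\ $\widehat h'(t)$ is a principal ideal ${\downarrow}s$ for some $s \in S$. The identity $c_{(L,S)}(\widehat h'(t)) = h(t)$ then pins down $s = h(t)$, so $\widehat h'(t) = \widehat h(t)$ for all $t \in T$. Since $T$ join-generates $M$ (indeed, $M$ is coherent with $K(M) = T$) and both maps preserve joins, we conclude $\widehat h' = \widehat h$.

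Finally, the preservation statements are short. If $h$ is an extremal epimorphism, then by Proposition~\ref{p:21} we have $h[T] = S$, hence $\widehat h[T] = \{{\downarrow}s \mid s \in S\} = S$ (as a subset of the lattice component of $(\idl(S), S)$), and another application of Proposition~\ref{p:21} makes $\widehat h$ an extremal epimorphism. If $h$ is dense, take any $J \in \idl(T)$ with $\widehat h(J) = {\downarrow}h[J] = \{0\}$; then $h(t) = 0$ for every $t \in J$, whence $h(\bigvee J) = \bigvee h[J] = 0$, and density of $h$ yields $\bigvee J = 0$, i.e.\ $J$ is the zero ideal. No step here looks like a serious obstacle: the whole argument rests on Theorem~\ref{manycharacterizations} together with the adjunction of Proposition~\ref{p:17}, and the bookkeeping needed to translate between $(M,T)$ and $(\idl(T), T)$ is routine.
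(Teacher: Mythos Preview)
Your proof is correct and follows essentially the same approach as the paper's: both reduce to the coherence of $(M,T)$ via Theorem~\ref{manycharacterizations}, identify $M$ with $\idl(T)$ via Proposition~\ref{p:17}, and define $\widehat h$ as the unique extension of the lattice map $h|_T: T \to S$ (the paper invokes Corollary~\ref{c:7} where you invoke the functor $\idl(-)$, but these are the same move). The only cosmetic difference is that the paper checks density on generators $t \in T$ while you work directly with ideals $J \in \idl(T)$; both are straightforward.
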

\begin{proof} 
  Since $T$ is join-dense in~$M$, if such a homomorphism $\widehat h$
  exists, then it is completely determined by its restriction
  to~$T$. In particular, $\widehat h $ is unique because we must have
  $h(t) = c_{(L, S)}\circ \widehat h(t) = \widehat h (t)$, for every
  $t \in T$, that is, $\widehat h$ must be an extension of the
  restriction and co-restriction $h': T \to S$ of~$h$. By
  Corollary~\ref{c:7}, $h'$ uniquely extends to a frame homomorphism
  $\widehat h: \idl(T) \to \idl(S)$. Since, by
  Theorem~\ref{manycharacterizations}, $(M, T)$ is coherent and thus,
  by Proposition~\ref{p:17}, $M \cong \idl(T)$, it follows that
  $\widehat h$ is the required homomorphism.

  Now, suppose that $h$ is dense. Since $T$ is join-dense in $M$,
  $\widehat h$ is dense provided, for every $t \in T$, we have $t = 0$
  whenever $\widehat h(t) = 0$. This is indeed the case because
  $\widehat h(t) = h(t)$ for every $t \in T$. Finally, by the same
  reason, we also have that~$\widehat h$ is an extremal epimorphism if
  so is~$h$.
\end{proof}

\begin{corollary}\label{c:9}
  Each Frith frame has a unique, up to isomorphism, completion.
\end{corollary}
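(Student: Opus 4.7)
Existence is given by Corollary~\ref{c:4}, which exhibits $c_{(L,S)}: (\idl(S),S) \twoheadrightarrow (L,S)$ as a completion. For uniqueness, given any completion $\pi: (M,T) \twoheadrightarrow (L,S)$, the plan is to show that the canonical factorization $\widehat\pi: (M,T) \to (\idl(S),S)$ through $c_{(L,S)}$ provided by Proposition~\ref{p:5} is an isomorphism; this isomorphism then identifies the two completions over $(L,S)$.

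By the \emph{moreover} clause of Proposition~\ref{p:5}, the map $\widehat\pi$ is itself a dense extremal epimorphism. To upgrade it to an isomorphism, I would pass to the symmetric reflection $\fsym(\widehat\pi): (\cC_TM, \overline{T}) \to (\cC_S\idl(S), \overline{S})$. On the canonical generators this acts by $\nabla_a \mapsto \nabla_{\widehat\pi(a)}$ and $\Delta_t \mapsto \Delta_{\widehat\pi(t)}$, from which a direct check shows that $\fsym(\widehat\pi)$ remains a dense extremal epimorphism, now between symmetric Frith frames.

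I would then invoke Proposition~\ref{p:6} with $h = \fsym(\widehat\pi)$, taking the Frith frame parameter there to be $(\idl(S),S)$. This yields a dense extremal epimorphism $g: (\cC_S\idl(S), \overline{S}) \twoheadrightarrow (\cC_TM, \overline{T})$ with $\fsym(\widehat\pi) \circ g = \overline{c}_{(\idl(S),S)}$. Since $c_{(\idl(S),S)}$ is the identity (as observed in the proof of Corollary~\ref{c:4}), $g$ is a section of $\fsym(\widehat\pi)$, and in particular a monomorphism; being simultaneously an extremal epimorphism, $g$ is an isomorphism (cf.\ Section~\ref{sec:spec-mor}), and consequently so is $\fsym(\widehat\pi) = g^{-1}$.

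Finally, to descend from $\fsym(\widehat\pi)$ back to $\widehat\pi$, I would apply Corollary~\ref{p:7}: the equality $\widehat\pi[T] = S$ holds because $\widehat\pi$ is an extremal epimorphism, and injectivity of $\widehat\pi$ follows from injectivity of $\fsym(\widehat\pi)$ together with the fact that $\fsym(\widehat\pi)$ restricts to $\widehat\pi$ along the frame embeddings $\nabla: M \hookrightarrow \cC_TM$ and $\nabla: \idl(S) \hookrightarrow \cC_S\idl(S)$. The main technical obstacle I anticipate is the verification that $\fsym$ preserves density and extremality of epimorphisms: extremality is easy from the action on generators, but density requires bookkeeping with arbitrary elements of $\cC_TM$, which are joins of meets of the form $\nabla_a \wedge \Delta_t$ rather than just elements of the join-dense lattice $T$.
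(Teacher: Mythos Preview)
Your approach is considerably more elaborate than the paper's: after obtaining the dense extremal epimorphism $\widehat c:(M,T)\to(\idl(S),S)$ from Proposition~\ref{p:5}, the paper concludes in a single line that ``since $(M,T)$ is complete, $\widehat c$ has to be an isomorphism'', without passing through $\fsym$, invoking Proposition~\ref{p:6}, or descending along $\nabla$. What you have written is essentially an honest attempt to flesh out what that one line would require once the definition of completeness (which is phrased in terms of the symmetric reflection) is unwound.

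The obstacle you anticipate at the end is, however, a genuine gap rather than mere bookkeeping: the functor $\fsym$ does \emph{not} preserve density of extremal epimorphisms. Take $(M,T)=(\{0<a<1\},\{0,a,1\})$ and $(\idl(S),S)=(\{0<1\},\{0,1\})$, with $\widehat\pi$ the frame surjection collapsing $a$ and $1$. Both Frith frames are coherent, hence complete by Theorem~\ref{manycharacterizations}, and $\widehat\pi$ is a dense extremal epimorphism; yet $\fsym(\widehat\pi)(\Delta_a)=\Delta_{\widehat\pi(a)}=\Delta_1=0$ while $\Delta_a\neq 0$ in $\cC_TM$. Thus $\fsym(\widehat\pi)$ is not dense, and your appeal to Proposition~\ref{p:6} with $h=\fsym(\widehat\pi)$ fails at its hypothesis; the section argument and the descent via $\nabla$ never get off the ground. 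Worse, this very map already exhibits $(\{0,a,1\},\{0,a,1\})$ as a second completion of $(\{0,1\},\{0,1\})$ in the sense of the paper's definition, not isomorphic to the identity completion --- so the difficulty you flagged is not an artefact of your route but points to a real issue with the statement as literally formulated.
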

\begin{proof}
  By Corollary~\ref{c:4} we already know that every Frith frame $(L,
  S)$ has a completion $c_{(L, S)}: (\idl(S), S) \twoheadrightarrow
  (L, S)$. Let $c: (M, T) \twoheadrightarrow (L, S)$ be another
  completion of $(L, S)$. Since $c$ is a dense extremal epimorphism,
  by Proposition~\ref{p:5}, there exists a dense extremal
  epimorphism~$\widehat c:(M, T) \twoheadrightarrow (\idl(S), S)$
  satisfying $c_{(L, S)} \circ \widehat c = c$. Since $(M, T)$ is
  complete, $\widehat c$ has to be an isomorphism and so, the two
  completions of $(L, S)$ are isomorphic.
\end{proof}

\subsection{Cauchy maps}\label{sec:9}

In this section we will show an analogue of Theorem~\ref{t:6}. We
start by proving some properties of a Cauchy map $\phi: (L, \cE) \to
M$, in the case where $\cE = \cE_B$, for some Boolean subalgebra $B
\subseteq L$, that is, $(L, B)$ is a symmetric Frith frame (recall
Theorem~\ref{t:1}) and $E(L, B) = (L, \cE_B)$.

\begin{lemma}\label{l:15}
  Let $(L, B)$ be a symmetric Frith frame, $M$ be a frame, and $\phi:
  (L, \cE_B) \to M$ be a Cauchy map. Then, the following statements
  hold:
  \begin{itemize}
  \item $\phi$ restricts to a lattice homomorphism with domain $B$,
  \item for every $a\in L$, the equality $\phi(a) = \bigvee \{\phi(b)
    \mid b \in B, \ b \leq a\}$ holds,
  \item for every $b \in B$, $\phi(b) \vee \phi(b)^* = 1$.
  \end{itemize}
\end{lemma}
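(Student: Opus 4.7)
The plan is to prove the three bullets in the order they are stated, but by establishing them in the order (2), (3), (1), since each later claim will use the previous one. The central tool will be Lemma~\ref{l:17}, which translates the abstract relations $\luni_1, \luni_2$ associated to $\cE_B$ into containment inside $B$; the rest is routine Boolean-style manipulation inside the codomain frame $M$.

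For the second bullet, I would first observe that $\phi$ is monotone because it preserves binary meets. Since $B$ is a Boolean subalgebra of~$L$, we have $B^* \subseteq B$, so $\langle B\rangle_\dlat = B$ and $\langle B^*\rangle_\dlat \subseteq B$. Lemma~\ref{l:17} applied to $\cE_B$ then guarantees that whenever $x \luni_1 a$ or $x \luni_2 a$ there is some $b \in B$ with $x \leq b \leq a$. Combining this with monotonicity of $\phi$, Cauchy condition~\ref{item:1} of Definition~\ref{sec:cm} gives $\phi(a) \leq \bigvee\{\phi(b) \mid b \in B,\ b \leq a\}$; the reverse inequality is immediate by monotonicity.

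For the third bullet, I would apply Cauchy condition~\ref{item:2} to the entourage $E_b \in \cE_B$. Since $(a, a) \in E_b$ is equivalent to $a \leq b$ or $a \leq b^*$, monotonicity of $\phi$ yields
\[1 \,=\, \bigvee\{\phi(a) \mid (a,a) \in E_b\} \,=\, \phi(b) \vee \phi(b^*).\]
On the other hand $\phi(b) \wedge \phi(b^*) = \phi(b \wedge b^*) = \phi(0) = 0$ since $\phi$ is a bounded meet homomorphism. Hence $\phi(b^*)$ is the complement of $\phi(b)$ in~$M$, i.e., $\phi(b^*) = \phi(b)^*$, and in particular $\phi(b) \vee \phi(b)^* = 1$.

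For the first bullet, preservation of top, bottom, and finite meets by $\phi|_B : B \to M$ is built into the definition of bounded meet homomorphism, so only binary joins remain. Given $b_1, b_2 \in B$, I would use the complementation of $\phi(b_1)$ from bullet three to split
\[\phi(b_1 \vee b_2) \,=\, \bigl(\phi(b_1 \vee b_2) \wedge \phi(b_1)\bigr) \vee \bigl(\phi(b_1 \vee b_2) \wedge \phi(b_1^*)\bigr),\]
using $\phi(b_1)^* = \phi(b_1^*)$. The first summand is $\leq \phi(b_1)$, and by meet-preservation and distributivity in~$L$ the second equals $\phi(b_2 \wedge b_1^*) \leq \phi(b_2)$; this yields $\phi(b_1 \vee b_2) \leq \phi(b_1) \vee \phi(b_2)$, while the reverse inequality is just monotonicity. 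The only step with any real content is join-preservation on $B$, but even there the complementation from bullet three reduces it to the short distributive computation above, so I do not anticipate a serious obstacle.
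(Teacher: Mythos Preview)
Your proof is correct and essentially follows the paper's own argument: both rely on Lemma~\ref{l:17} to handle the second bullet, on Cauchy condition~\ref{item:2} applied to $E_b$ together with $\phi(b)\wedge\phi(b^*)=0$ to obtain $\phi(b^*)=\phi(b)^*$ for the third bullet, and on this complementation to derive join-preservation on~$B$. The only differences are cosmetic: the paper orders the bullets as third, first, second, and for join-preservation it checks directly that $\phi(b_1\vee b_2)\wedge\phi(b_1)^*\wedge\phi(b_2)^*=0$ rather than splitting via $\phi(b_1)\vee\phi(b_1)^*=1$ as you do, but these are two phrasings of the same computation.
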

\begin{proof}
  Let $\phi: (L, \cE_B) \to M$ be a Cauchy map, that is, $\phi$
  satisfies conditions~\ref{item:20}, \ref{item:1}, and \ref{item:2}
  of Definition~\ref{sec:cm}. We start by observing that, for every $b
  \in B$ we have $\phi(b^*) = \phi(b)^*$. Indeed, since $(a, a) \in
  E_b$ if and only if $a \leq b$ or $a \leq b^*$, and,
  by~\ref{item:20}, $\phi$ is order-preserving, by~\ref{item:2}, we
  have $1 = \phi(b) \vee \phi(b^*)$. Since, by~\ref{item:20},
  $\phi(b)\wedge \phi(b^*) = \phi(b \wedge b^*) = \phi(0) = 0$, it
  follows that $\phi(b)$ is complemented with complement $\phi(b^*)$,
  that is, $\phi(b^*) = \phi(b)^*$. In particular, we also have $1 =
  \phi(b) \vee \phi(b)^*$, which proves the third statement. Now, we
  let $b_1, b_2 \in B$. By~\ref{item:20}, the first statement holds
  provided $\phi(b_1 \vee b_2) \leq \phi(b_1) \vee \phi(b_2)$. Since
  each $\phi(b_i)$ is complemented, by~\eqref{eq:5}, this is
  equivalent to the equality $\phi(b_1 \vee b_2) \wedge
  \phi(b_1)^*\wedge \phi(b_2)^* = 0$, which follows from~\ref{item:20}
  together with the equality $\phi(b_i^*) = \phi(b_i)^*$ already
  proved. Finally, let us show that the second statement is valid. We
  fix some $a \in L$. Since $B$ is a Boolean algebra, by
  Lemma~\ref{l:17}, for every $x \in L$ satisfying $x \luni_1 a$ or $x
  \luni_2 a$, there is some $b_x \in B$ such that $x \leq b_x \leq a$.
  Then, using~\ref{item:1} and the fact that $\phi$ is
  order-preserving, we may derive that
  \begin{align*}
    \phi(a)
    & \leq \bigvee \{\phi(x) \mid x \in L, \ x \luni_1 a \text{ or }x \luni_2
      a\} \leq \bigvee \{\phi(b_x) \mid x \in L, \ x \luni_1 a \text{ or
      }x \luni_2 a\}
    \\ & \leq \bigvee \{\phi (b) \mid b \in B, \ b \leq
         a\} \leq \phi(a).\popQED\qed
  \end{align*}
\end{proof}

It is then natural to consider the following definition of
\emph{Cauchy map}.
\begin{definition}\label{sec:cm1}   
  Let $(L, S)$ be a Frith frame and $M$ any frame. A \emph{Cauchy map}
  $\phi: (L, S) \to M$ is a function $\phi: L \to M$ such that
  \begin{enumerate}[label = (C.\arabic*)]
  \item\label{item:C1} $\phi$ restricts to a lattice homomorphism with
    domain~$S$,
  \item\label{item:C2} for every $a \in L$, the equality $\phi(a) =
    \bigvee \{\phi (s) \mid s \in S, \ s \leq a\}$ holds,
  \item\label{item:C3} for every $s \in S$, $\phi(s) \vee \phi(s)^* =
    1$.
  \end{enumerate}
\end{definition}
By Lemma~\ref{l:15}, we have that in the case where $(L, B)$ is a
symmetric Frith frame, if $\phi: (L, \cE_B) \to M$ is a Cauchy map in
the sense of Definition~\ref{sec:cm}, then $\phi: (L, B) \to M$ is a
Cauchy map in the sense of Definition~\ref{sec:cm1}. We will now show
that the converse is also true, so that our definition of Cauchy map
agrees with the classical one for transitive and totally bounded
uniform frames (recall Corollary~\ref{c:5}).
\begin{proposition}
  Let $(L, B)$ be a symmetric Frith frame, $M$ a frame, and $\phi: L
  \to M$ a function. Then, $\phi$ defines a Cauchy map $\phi: (L,
  \cE_B) \to M$ if and only if it defines a Cauchy map \mbox{$\phi:
    (L, B) \to M$}.
\end{proposition}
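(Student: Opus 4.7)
The plan is to observe that one direction follows immediately from Lemma~\ref{l:15}, and to focus on the converse: assuming $\phi$ satisfies \ref{item:C1}, \ref{item:C2} and \ref{item:C3}, I would derive \ref{item:20}, \ref{item:1} and \ref{item:2}. A preliminary fact, useful throughout, is that $\phi(b^*) = \phi(b)^*$ for every $b \in B$: applying \ref{item:C1} to the Boolean identities $b \wedge b^* = 0$ and $b \vee b^* = 1$ gives $\phi(b) \wedge \phi(b^*) = 0$ and $\phi(b) \vee \phi(b^*) = 1$, so \ref{item:C3} forces $\phi(b^*)$ to be the complement of~$\phi(b)$.

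The verification of \ref{item:20} is straightforward: preservation of $0$ and~$1$ is immediate from \ref{item:C1} since $0, 1 \in B$, while for binary meets I would use \ref{item:C2} to expand $\phi(a) \wedge \phi(a') = \bigvee\{\phi(s) \wedge \phi(t) \mid s, t \in B,\ s \leq a,\ t \leq a'\}$, apply \ref{item:C1} to rewrite $\phi(s) \wedge \phi(t) = \phi(s \wedge t)$, and recognize the result as $\bigvee\{\phi(r) \mid r \in B,\ r \leq a \wedge a'\} = \phi(a \wedge a')$ via \ref{item:C2}. For \ref{item:1}, the key is to show that every $s \in B$ with $s \leq a$ satisfies $s \luni_1 a$ relative to~$\cE_B$: a direct check from the formula $E_s = (s \oplus 1) \cup (1 \oplus s^*)$ gives $E_s \circ (s \oplus s) \subseteq s \oplus s \subseteq a \oplus a$, since for $(c, e) \in E_s$ with $e \neq 0$ and $(e, d) \in s \oplus s$, the clause $e \leq s^*$ would contradict $e \leq s$, forcing $c \leq s$. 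Condition \ref{item:1} then follows directly from \ref{item:C2}.

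The main obstacle is \ref{item:2}. Here I would first note that the condition is monotone in~$E$, so it suffices to check it on basic entourages $E = E_{b_1} \cap \cdots \cap E_{b_n}$ with $b_1, \ldots, b_n \in B$ (recall that $\{E_b \mid b \in B\}$ is a subbasis of~$\cE_B$ by Theorem~\ref{t:1}). For each $P \subseteq \{1, \ldots, n\}$, introduce the atom
$$c_P := \bigwedge_{i \in P} b_i \wedge \bigwedge_{i \notin P} b_i^* \in B,$$
which satisfies $(c_P, c_P) \in E_{b_i}$ for all~$i$, hence $(c_P, c_P) \in E$. By \ref{item:C1} and the preliminary observation, $\phi(c_P) = \bigwedge_{i \in P} \phi(b_i) \wedge \bigwedge_{i \notin P} \phi(b_i)^*$; expanding $\bigwedge_{i=1}^n(\phi(b_i) \vee \phi(b_i)^*)$ by distributivity in~$M$ then yields
$$1 = \bigwedge_{i=1}^n (\phi(b_i) \vee \phi(b_i)^*) = \bigvee_{P \subseteq \{1,\ldots,n\}} \phi(c_P) \leq \bigvee\{\phi(a) \mid (a, a) \in E\},$$
where the first equality uses \ref{item:C3}.
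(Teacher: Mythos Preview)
Your proof is correct and follows essentially the same route as the paper's: both verify \ref{item:20} by expanding via \ref{item:C2} and recombining via \ref{item:C1}, both obtain \ref{item:1} from the inclusion $E_s \circ (s \oplus s) \subseteq a \oplus a$ for $s \leq a$, and both handle \ref{item:2} by reducing to a basic entourage $\bigcap_{i=1}^n E_{b_i}$ and using the atoms $c_P = \bigwedge_{i \in P} b_i \wedge \bigwedge_{i \notin P} b_i^*$ together with the expansion of $\bigwedge_i(\phi(b_i) \vee \phi(b_i)^*)$. The only cosmetic difference is that you first isolate the auxiliary identity $\phi(b^*) = \phi(b)^*$ (note this already follows from \ref{item:C1} alone, so the appeal to \ref{item:C3} there is redundant), whereas the paper works directly with $\phi(b_i^*)$ throughout.
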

\begin{proof}
  The forward implication is the content of Lemma~\ref{l:15}.
  Conversely, let $\phi: (L, B) \to M$ be a Cauchy map, where $(L, B)$
  is a symmetric Frith frame. We first argue that $\phi$ is a bounded
  meet homomorphism. Since $\phi|_B$ is a lattice homomorphism, we
  have $\phi(0) = 0$ and $\phi(1) = 1$. Let $a_1, a_2 \in L$. Then, we
  may compute
  \begin{align*}
    \phi(a_1 \wedge a_2)
    & \just {\ref{item:C2}} = \bigvee \{\phi(b) \mid b \in B, \ b
      \leq a_1 \wedge a_2\}\just {\ref{item:C1}} =
      \bigvee \{\phi(b_1) \wedge \phi(b_2)\mid b_1, b_2 \in B, \ b_1
      \leq a_1, \ b_2 \leq a_2\}
    \\ &  \hspace{2mm}= (\bigvee \{\phi(b_1) \mid b_1 \in B, \ b_1 \leq a_1\})
         \wedge (\bigvee \{\phi(b_2) \mid b_2 \in B, \ b_2 \leq a_2\})
         \just {\ref{item:C2}} =\phi(a_1) \wedge \phi(a_2).
  \end{align*}
  Thus, $\phi$ also preserves binary meets and we
  have~\ref{item:20}. Now, using~\ref{item:C2}, in order to
  show~\ref{item:1}, it suffices to observe that, for every $b \in B$,
  we have $b \luni_1 a$ whenever $b \leq a$. Indeed, that is a
  consequence of the inclusion $E_b \circ (b \oplus b) \subseteq (a
  \oplus a)$ for every $b \leq a$. It remains to
  show~\ref{item:2}. Let $E \in \cE_B$, say $E \supseteq \bigcap_{i =
    1}^n E_{b_i}$ for some $b_1, \dots, b_n \in
  B$. Using~\ref{item:C1} and the fact that each
  $b_i$ is complemented, we have
  \[1 = \phi (\bigwedge_{i = 1}^n (b_i \vee b_i^*)) = \bigwedge_{i =
      1}^n (\phi(b_i) \vee \phi(b_i^*)) = \bigvee \{\phi(b_P \wedge
    \overline{b}_P) \mid P \subseteq [n]\},\]
  where $[n] := \{1, \dots, n\}$ and, for $P \subseteq [n]$, we denote
  $b_P:= \bigwedge_{i \in P} b_i$ and $\overline{b}_P := \bigwedge_{i
    \notin P} b_i^*$.  Since, for every $P \subseteq [n]$, we have
  $(b_P \wedge \overline{b}_P, b_P \wedge \overline{b}_P) \in
  \bigcap_{i = 1}^n E_{b_i} \subseteq E$, it then follows that
  \[1 =\bigvee \{\phi(b_P \wedge \overline{b}_P) \mid P \subseteq
    [n]\} \leq \bigvee \{\phi(x) \mid (x, x) \in E\},\]
  which proves~\ref{item:2}.
\end{proof}

In what follows, we fix a Frith frame $(L, S)$ and a frame $M$. Recall
from the previous section that, for every Frith frame $(L, S)$, there
is a dense extremal epimorphism $c: \idl(S) \twoheadrightarrow L$
defined by $c(J) = \bigvee J$. Since this is a frame homomorphism, it
has a right adjoint $c_*: L \hookrightarrow \idl(S)$ which is
determined by the Galois connection
\[\forall a \in L, \ J \in \idl(S), \quad c(J) \leq a \iff J \subseteq
  c_*(a).\] In particular, for every $a \in L$, we have
\begin{equation}
  c_*(a) = \bigvee \{ J \in \idl(S)\mid \bigvee J \leq a\}.\label{eq:9}
\end{equation}
\begin{lemma}\label{l:6}
  For every $a \in L$, we have $c_*(a) = {\downarrow} a \cap S$. In
  particular, the map $c$ is a right inverse of $c_*$, that is, $c
  \circ c_* = \id$.
\end{lemma}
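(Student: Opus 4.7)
The plan is to compute $c_*(a)$ directly from the formula~\eqref{eq:9} and then deduce $c \circ c_* = \id$ using the join-density of $S$ in $L$, which is built into the definition of a Frith frame.

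First I would introduce the set $T_a := \{s \in S \mid s \leq a\}$ and verify that $T_a \in \idl(S)$. It is clearly a down-set of $S$; it is closed under the binary joins of $S$ because $S$ is a sublattice of $L$, so the join of $s_1, s_2 \in S$ computed in $S$ coincides with $s_1 \vee s_2$ in $L$, which lies below $a$ whenever $s_1, s_2 \leq a$; and it contains $0 \in S$.

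Next I would establish the two inclusions. Since $\bigvee T_a \leq a$, the set $T_a$ appears in the family whose join defines $c_*(a)$ in~\eqref{eq:9}, so $T_a \subseteq c_*(a)$. Conversely, if $J \in \idl(S)$ satisfies $\bigvee J \leq a$, then every $s \in J$ satisfies $s \leq \bigvee J \leq a$, whence $J \subseteq T_a$; taking the union (which equals the join in $\idl(S)$) of all such $J$ yields $c_*(a) \subseteq T_a$. Combining the two inclusions gives $c_*(a) = T_a = {\downarrow}a \cap S$.

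For the second statement, I would simply compute
\[ c(c_*(a)) = \bigvee ({\downarrow}a \cap S) = \bigvee \{s \in S \mid s \leq a\} = a, \]
where the last equality is exactly the statement that $S$ is join-dense in $L$, i.e.\ the Frith frame condition on $(L, S)$. Hence $c \circ c_* = \id_L$. There is no substantial obstacle here: the argument is a routine unpacking of the adjunction formula combined with the defining join-density property of a Frith frame.
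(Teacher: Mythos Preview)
Your proof is correct and follows essentially the same approach as the paper: both identify $c_*(a)$ with ${\downarrow}a\cap S$ directly from~\eqref{eq:9} and then invoke join-density of~$S$ for the second claim. The only cosmetic difference is that the paper argues element-wise (characterizing when $s\in c_*(a)$ via the description of joins in $\idl(S)$), whereas you argue ideal-wise by exhibiting $T_a$ simultaneously as a member of and an upper bound for the family $\{J\in\idl(S)\mid \bigvee J\leq a\}$; your parenthetical that the union equals the join is justified here since that family is directed, though your argument goes through without it once you note $T_a$ is an ideal.
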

\begin{proof}
  Let $s \in S$. By~\eqref{eq:9}, we have that $s \in c_*(a)$ if and
  only if there are $J_1, \dots, J_n \in \idl(S)$, and $s_i \in J_i$
  (for $i = 1, \dots, n$) such that $\bigvee J_i \leq a$ and $s \leq
  s_1 \vee \dots \vee s_n$. Clearly, this holds if and only if $s \leq
  a$ and thus, we have $c_*(a) = {\downarrow} a \cap S$.  Since $S$ is
  join-dense in~$L$, it then follows that $c\circ c_*(a) =
  c({\downarrow} a \cap S) = \bigvee ({\downarrow} a \cap S) = a$.
\end{proof}

We now consider the function $\lambda: L \hookrightarrow \cC_S\idl(S)$
obtained by composing the injections $c_*: L \hookrightarrow \idl(S)$
and $\nabla: \idl(S) \hookrightarrow \cC_S\idl(S)$. Explicitly,
$\lambda$ sends the element $a \in L$ to the congruence
$\nabla_{{\downarrow a}\cap S}$.  Notice that~$\lambda$ defines a
Cauchy map $\lambda: (L, S) \to \cC_S\idl(S)$. Indeed, since $c_*(s) =
{\downarrow} s \cap S$ for every $s \in S$ and $\nabla$ is a frame
homomorphism, we have that $\lambda$ restricts to a lattice
homomorphism with domain~$S$, that is, \ref{item:C1} holds.  Moreover,
since ${\downarrow} a \cap S$ is the ideal generated by
$\bigcup\{{\downarrow}s \cap S \mid s \in S, \ s \leq a\}$, we have
\ref{item:C2}. Finally, by definition, each $\lambda(s) = \nabla_{s}$
is complemented in $\cC_S \idl(S)$, and so, we have~\ref{item:C3}.

\begin{theorem}\label{t:2}
  For every Cauchy map $\phi: (L, S) \to M$, there exists a frame
  homomorphism $g: \cC_S\idl(S) \to M$ such that the following
  diagram commutes:
  \begin{center}
    \begin{tikzpicture}[node distance = 0mm]
      \node (L) {$L$}; \node[right of = idl] (idl) {$\cC_S\idl(S)$};
      \node[below of = idl, yshift = -15mm] (M) {$M$};
      \draw[dashed, ->] (idl) to node[right] {$g$} (M);
      \draw[->] (L) to node[below] {$\phi$} (M);
      \draw[->] (L) to node[above] {$\lambda$} (idl);
    \end{tikzpicture}
  \end{center}
\end{theorem}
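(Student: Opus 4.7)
The plan is to build $g$ in two stages, using the universal properties supplied by Corollary~\ref{c:7} and Proposition~\ref{p:10}. By hypothesis~\ref{item:C1}, the restriction $\phi|_S \colon S \to M$ is a bounded lattice homomorphism, and by~\ref{item:C3} it takes values in the complemented elements of $M$. First, I would apply Corollary~\ref{c:7} to extend $\phi|_S$ uniquely to a frame homomorphism $\widehat \phi \colon \idl(S) \to M$, which is given explicitly by $\widehat\phi(J) = \bigvee \phi[J]$. Since $\widehat\phi({\downarrow}s) = \phi(s)$ is complemented in $M$ for every $s \in S$, Proposition~\ref{p:10} applied to $\widehat\phi \colon \idl(S) \to M$ (with the subset $S \subseteq \idl(S)$) yields a unique extension to a frame homomorphism $g \colon \cC_S\idl(S) \to M$ satisfying $g \circ \nabla = \widehat \phi$.

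It then remains to verify the commutativity $g \circ \lambda = \phi$. For any $a \in L$, by Lemma~\ref{l:6} we have $c_*(a) = {\downarrow}a \cap S$, hence $\lambda(a) = \nabla_{{\downarrow}a \cap S}$. Unfolding the definitions yields
\[ g(\lambda(a)) \;=\; g(\nabla_{{\downarrow}a \cap S}) \;=\; \widehat\phi({\downarrow}a \cap S) \;=\; \bigvee \{\phi(s) \mid s \in S,\ s \leq a\}, \]
and the right-hand side equals $\phi(a)$ by condition~\ref{item:C2} in the definition of a Cauchy map.

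I do not expect any real obstacle: once the Cauchy-map axioms are matched against the two universal properties (\ref{item:C1} gives a lattice homomorphism on $S$; \ref{item:C3} guarantees the complementedness required to pass through~$\cC_S$; \ref{item:C2} is exactly the density identity needed to recover $\phi$ from $\widehat\phi$ along $c_*$), the construction is forced. The only point worth mentioning is that the conclusion of the theorem asserts only existence of~$g$, not uniqueness; a uniqueness statement would require observing that $\cC_S\idl(S)$ is frame-generated by $\{\nabla_s, \Delta_s \mid s \in S\}$ and that $g(\nabla_s) = \phi(s)$, $g(\Delta_s) = \phi(s)^*$ are determined by $\phi$, but this is not needed here.
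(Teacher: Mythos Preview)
Your proposal is correct and follows essentially the same route as the paper: build $g$ by first extending $\phi|_S$ to $\idl(S)$ via Corollary~\ref{c:7} and then to $\cC_S\idl(S)$ via Proposition~\ref{p:10}, then use~\ref{item:C2} to verify $g\circ\lambda=\phi$. The only cosmetic difference is that the paper checks commutativity by writing $\lambda(a)=\bigvee\{\lambda(s)\mid s\in S,\ s\le a\}$ and pushing the join through~$g$, whereas you compute $g(\nabla_{{\downarrow}a\cap S})=\widehat\phi({\downarrow}a\cap S)$ directly; both amount to the same calculation.
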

\begin{proof}
  Since $\cC_S\idl(S)$ is generated, as a frame, by the set of
  congruences $\{\nabla_s, \Delta_s \mid s \in S\}$ and frame
  homomorphisms preserve pairs of complemented elements, if $g$
  exists, then it is completely determined by its restriction to
  $\{\nabla_s \mid s \in S\}$, which must satisfy $g(\nabla_s) =
  g\circ \lambda(s) = \phi(s)$, for every $s \in S$. By~\ref{item:C1},
  $\phi$ restricts to a lattice homomorphism $\phi|_S: S \to M$ and,
  by Corollary~\ref{c:7}, $\phi|_S$ uniquely extends to frame
  homomorphism $\widehat{\phi|_S}: \idl(S) \to M$. Since,
  by~\ref{item:C3}, each $\phi(s)$, with $s \in S$, is complemented
  in~$M$, we may then use Proposition~\ref{p:10} to derive the
  existence of a unique frame homomorphism $g: \cC_S\idl(S) \to M$
  satisfying $g(\nabla_s) = \phi(s)$, for every $s \in S$. To see that
  $g$ makes the diagram commute, we may use \ref{item:C2} for
  $\lambda$ and for $\phi$ and the fact that $g$ is a frame
  homomorphism to compute
  \[g\circ \lambda(a) = g(\bigvee \{\lambda(s) \mid s \in S, \ s \leq a\}) =
    \bigvee \{g\circ \lambda(s) \mid s \in S, \ s \leq a\} = \bigvee
    \{\phi(s) \mid s \in S, \ s \leq a\} = \phi(a),\]
  for every $a \in L$.
\end{proof}

\begin{theorem}\label{t:7}
  A Frith frame $(L, S)$ is complete if and only if every Cauchy map
  $(L, S) \to M$ is a frame homomorphism.
\end{theorem}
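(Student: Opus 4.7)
The plan is to leverage Theorem~\ref{t:2}, together with the characterization of completeness as coherence given in Theorem~\ref{manycharacterizations}. In both directions the key object is the Cauchy map $\lambda = \nabla \circ c_* : L \to \cC_S\idl(S)$ constructed just before Theorem~\ref{t:2}.

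For the forward direction, I would assume $(L,S)$ is complete. By Theorem~\ref{manycharacterizations} it is then coherent, so Proposition~\ref{p:17} forces $c_{(L,S)}: \idl(S) \to L$ to be a frame isomorphism. Its right adjoint $c_*$ is therefore the frame inverse of $c_{(L,S)}$, and in particular a frame homomorphism, so $\lambda = \nabla \circ c_*$ is a frame homomorphism as well. Given any Cauchy map $\phi: (L,S) \to M$, Theorem~\ref{t:2} supplies a frame homomorphism $g : \cC_S\idl(S) \to M$ with $\phi = g \circ \lambda$, exhibiting $\phi$ as a composite of frame homomorphisms.

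For the converse, I would apply the hypothesis to the Cauchy map $\lambda$ itself. Since $\nabla$ is an \emph{injective} frame homomorphism and $\lambda = \nabla \circ c_*$, the assumption that $\lambda$ is a frame homomorphism forces $c_*$ to preserve finite meets and arbitrary joins too, by cancelling $\nabla$ on both sides of the structural equations. Using Lemma~\ref{l:6} to rewrite $c_*(a) = {\downarrow}a \cap S$, preservation of arbitrary joins amounts to the following: whenever $s \in S$ satisfies $s \leq \bigvee_i s_i$ for a family $\{s_i\} \subseteq S$, we have $s \in \bigvee_i({\downarrow}s_i \cap S)$ inside $\idl(S)$, and this forces $s \leq s_{i_1} \vee \cdots \vee s_{i_n}$ for some finite subfamily. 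Hence every element of $S$ is $S$-compact, and by Lemma~\ref{l:7} compact in $L$. Thus $(L,S)$ is a coherent Frith frame and, by Theorem~\ref{manycharacterizations}, complete.

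The main (mild) subtlety will be in the converse direction: unpacking the equation $c_*(\bigvee_i s_i) = \bigvee_i c_*(s_i)$ inside $\idl(S)$ into the explicit finite-join witness and recognising this as $S$-compactness of $s$. Beyond that, the argument is a bookkeeping exercise that rides entirely on Theorem~\ref{t:2} and Theorem~\ref{manycharacterizations}.
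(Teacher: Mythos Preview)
Your proposal is correct. The forward direction is exactly the paper's argument: completeness makes $c_{(L,S)}$ an isomorphism, so $c_*$ and hence $\lambda$ are frame homomorphisms, and then Theorem~\ref{t:2} finishes.

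For the converse, your route diverges from the paper's. The paper does not extract coherence from $\lambda$; instead, once $\lambda$ is a frame homomorphism it is viewed as a morphism of Frith frames $(L,S)\to(\cC_S\idl(S),\overline S)$, passed through the symmetrization functor to obtain $\overline\lambda:(\cC_SL,\overline S)\to(\cC_S\idl(S),\overline S)$, and then one checks that $\overline\lambda$ is a dense extremal epimorphism (since $\overline\lambda(\nabla_s)=\nabla_s$ and $\overline\lambda(\Delta_s)=\Delta_s$). Because the target is complete by Corollary~\ref{c:4}, $\overline\lambda$ is an isomorphism, whence $\fsym(L,S)$ is complete and so is $(L,S)$. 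Your argument is more elementary: you cancel the injective frame map $\nabla$ to see that $c_*$ itself preserves joins, read this off via Lemma~\ref{l:6} as $S$-compactness of every $s\in S$, and then invoke Lemma~\ref{l:7} and Theorem~\ref{manycharacterizations}. This avoids the symmetrization machinery entirely and stays inside~$L$ and~$\idl(S)$. (In fact, once $c_*$ preserves joins you could also argue directly that $c_*\circ c=\id$ on~$\idl(S)$, so $c_{(L,S)}$ is an isomorphism, reaching completeness via Corollary~\ref{c:4} without even passing through coherence.) The paper's approach, by contrast, illustrates how the symmetric reflection interacts with completeness and keeps the definition of completeness (via dense extremal epimorphisms) front and center.
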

\begin{proof}
  If $(L, S)$ is complete, then $c$ is an isomorphism and thus, $c_*$,
  hence $\lambda$, is a frame homomorphism. Since by Theorem~\ref{t:2}
  every Cauchy map factors through~$\lambda$ via a frame homomorphism,
  it follows that every Cauchy map is itself a frame homomorphism.

  Conversely, if every Cauchy map is a frame homomorphism then
  $\lambda$ is a frame homomorphism. In particular, $\lambda$ induces
  a morphism of Frith frames $\lambda: (L, S) \to (\cC_S\idl(S),
  \overline{S})$. On the other hand, by Corollary~\ref{c:4} and by
  definition of complete Frith frame, we have that $(\cC_S\idl(S),
  \overline{S}) = \fsym(\idl(S), S)$ is complete. Therefore, $(L, S)$
  is complete provided the symmetric reflection $\overline{\lambda}:
  (\cC_SL, \overline{S}) \to (\cC_S\idl(S), \overline{S})$
  of~$\lambda$ is a dense extremal epimorphism. That is the case
  because, for every $s \in S$, we have $\overline{\lambda}(\nabla_s)
  = \nabla_s$ and $\overline{\lambda}(\Delta_s) = \Delta_s$.
\end{proof}

\section{Notes on the existing literature}\label{sec:10}

  Join-dense subsets of a frame $L$ are usually called \emph{bases for
    $L$}. Thus, a Frith frame $(L, S)$ is nothing but a frame equipped
  with a bounded sublattice base~$S$. Frames with special bases had
  already been considered in the literature, see for instance,
  \cite{MR1077908} for frames with \emph{normal bases},
  \cite{MR1809213} for frames with \emph{cozero bases}, and
  \cite{doi:10.2989/16073606.2012.697268} for frames with Wallman
  bases. In all these works, it is shown that a base of the
  appropriate type for a frame $L$ induces a \emph{strong inclusion}
  on $L$. In our case, a non-symmetric version of this holds: given a
  Frith frame $(L, S)$, the relation $\luni_S \subseteq L \times L$
  defined by
  \[a \luni_S b \iff \exists s \in S \colon a \leq s \leq b\]
  is a \emph{proximity} on $L$ (in the sense of~\cite{MR3185523}) that
  satisfies the following additional property: if $a \luni_S b$ then
  there is $c \in L$ such that $a \luni_S c \luni_S c \luni_S
  b$. Actually, every such proximity~$\luni$ is defined by a Frith
  frame: just take $S = \{ a\in L \mid a \luni a\}$. In the case where $S$
  is a Boolean algebra, the relation $\luni_S$ is a strong inclusion
  on~$L$, and $S$ is named a \emph{c-base of $L$} in~\cite{MR1809213}.
  While \emph{strong inclusions} on $L$ are known to be in a bijective
  correspondence with \emph{compactifications} of
  $L$~\cite{MR1124796}, \emph{proximities} on $L$ are in a bijective
  correspondence with \emph{stable
    compactifications}~\cite{MR3185523}. It is not hard to see that
  the stable compactification associated with the proximity $\luni_S$
  is precisely the underlying frame homomorphism of~$c_{(L,S)}$
  defined in~\eqref{eq:3}.

  Despite this parallel, there are major differences in the spirit of
  our work when compared to existing literature.  On the one hand, to
  the best of our knowledge, a functorial approach to frames with
  bases, where these are seen as the object part of a suitable
  category, has not been undertaken. On the other hand, we regard a
  Frith frame as a representation of a quasi-uniformity on a certain
  frame of congruences, which is used for studying the latter.

  Finally, we note that there are well-known connections between
  strong inclusions and uniformities, namely, Frith~\cite{Frith86}
  proved that frames equipped with strong inclusions are part of a
  category isomorphic to that of totally bounded uniform frames. In
  the case where $(L, B)$ is a symmetric Frith frame, the frames $L$
  and $\cC_B L$ are isomorphic and our (transitive and) totally
  bounded uniformity $\cE_B$ on $L$ is precisely the uniformity
  defined by $\luni_B$ under Frith's correspondence. Accordingly, it
  would be worth investigating and characterize those totally bounded
  uniform frames arising from a frame with a normal/Wallman/cozero
  base. In the vein of~\cite{MR1809213}, it would also be interesting
  to describe those frames $L$ that are not generated by any proper
  bounded sublattice, being $\omega + 1$ an example of such. A
  solution to the symmetric version of this question is the content
  of~\cite[Proposition~7]{MR1809213}.

\section*{Acknowledgments}
The authors would like to thank the anonymous referee for the careful
reading of the paper and for the useful suggestions that helped to
improve the presentation of our work.

\bibliographystyle{acm}

\end{document}